\theoremstyle{definition}
\newtheorem{defi}{Definition}[section]
\newtheorem{rem}[defi]{Remark}
\newtheorem*{notations}{Notation}
\newtheorem{construction}[defi]{Construction}
\theoremstyle{plain}
\newtheorem{teo}[defi]{Theorem}
\newtheorem{lem}[defi]{Lemma}
\newtheorem{pro}[defi]{Proposition}
\newcommand{\N}{\mathbb{N}}
\newcommand{\vp}{\varphi}
\newcommand{\R}{\mathbb{R}}
\newcommand{\pp}{\partial}
\newcommand{\ve}{\varepsilon}
\newcommand{\se}{\subseteq}
\newcommand{\ceq}{\coloneqq}
\newcommand{\res} {\mathop{\hbox{\vrule height 7pt width .5pt depth 0pt \vrule height .5pt width 6pt depth 0pt}}\nolimits}
\newcommand{\leb}{\mathcal{L}}
\newcommand{\sbv}{\operatorname{SBV}}
\newcommand{\bv}{\operatorname{BV}}
\newcommand{\J}{\mathcal{J}}
\newcommand{\hau}{\mathcal{H}}
\renewcommand{\div}{\operatorname{div}}
\newcommand{\nl}{\left\|}
\newcommand{\nr}{\right\|}
\newcommand{\spt}{\operatorname{spt}}
\newcommand{\eps}{{\ve^{\ell,s}_k}}
\newcommand{\lsk}{^{\ell,s}_k}
\newcommand{\loc}{\operatorname{loc}}
\newcommand{\ap}{{\operatorname{ap}}}
\newcommand{\lski}{^{k,\ell,s}_i}
\newcommand{\jf}{\mathfrak{j}}
\newcommand{\ch}{{\operatorname{ch}}}
\subjclass[2020]{26B30, 53C17, 49Q15.}
\title[SBV functions in CC spaces]{SBV functions in Carnot-Carathéodory spaces}
\keywords{Special functions with bounded variation, Carnot-Carathéodory spaces, sub-Riemannian geometry, rectifiable sets.}
\begin{document}

\author[M. Di Marco]{Marco Di Marco}
\address[M. Di Marco]{Dipartimento di Matematica ``T. Levi-Civita'', Università di Padova, via Trieste 63, 35121 Padova, Italy.}
\email{marco.dimarco@phd.unipd.it}

\author[S. Don]{Sebastiano Don}
\address[S. Don]{Dipartimento di Ingegneria Civile, Architettura, Territorio, Ambiente e di Matematica (DICATAM), via Branze, 38, 25123 Brescia, Italy}
\email{sebastiano.don@unibs.it}

\author[D. Vittone]{Davide Vittone}
\address[D. Vittone]{Dipartimento di Matematica ``T. Levi-Civita'', Università di Padova, via Trieste 63, 35121 Padova, Italy.}
\email{davide.vittone@unipd.it}

\begin{abstract}
We introduce the space SBV$_X$ of special functions with bounded $X$-variation in Carnot-Carathéodory spaces and study its main properties. Our main outcome is an approximation result, with respect to  the BV$_X$ topology, for SBV$_X$ functions.
\end{abstract}

\maketitle

\section{Introduction}

Functions with Bounded Variation (BV), and in particular their subclass of {\em special} functions with Bounded Variation (SBV), provide a natural framework for studying problems involving discontinuities, such as image processing, signal analysis, and variational problems. 
Over recent years, a considerable effort was put into the study  of BV functions in Carnot-Carathéodory (CC) spaces. The aim of this paper is to contribute to this area of research by introducing the space $\sbv_X$ of special functions with bounded $X$-variation and studying its properties. In particular, we extend  to the setting of CC spaces  the following  approximation result for classical SBV functions proved by G.~De Philippis, N.~Fusco and A.~Pratelli in~\cite{dpfp}. 

\begin{teo}[{\cite[Theorem A]{dpfp}}]\label{teo_introclassic}
    Let $\Omega \subset \R^n$ be an open set and let $u \in \sbv(\Omega)$. Then, there exists a sequence of functions $u_k \in \sbv(\Omega)$ and of compact $C^1$-manifolds $M_k \subset \subset \Omega$ such that $\J_{u_k} \subseteq M_k \cap \J_u$, $\hau^{n-1}(\overline{\J_{u_k}} \setminus \J_{u_k})=0$ and
    \[
    \nl u_k-u \nr_{\bv(\Omega)}\xrightarrow{k \to +\infty}0, \qquad u_k \in C^\infty (\Omega \setminus \overline{\J_{u_k}}).
    \]
\end{teo}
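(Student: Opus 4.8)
The plan is to exploit the structure theorem for $\sbv$ functions, namely $Du=\nabla u\,\leb^n+(u^+-u^-)\nu_u\,\hau^{n-1}\res\J_u$, together with the $(n-1)$-rectifiability of the jump set $\J_u$, and to build each $u_k$ by a mollification procedure adapted to a finite, compact, $C^1$ piece $M_k$ of $\J_u$. As a preliminary reduction I would truncate, replacing $u$ by $u^N\ceq\max(-N,\min(N,u))$: by the chain rule in $\bv$ these are still $\sbv$ functions with $\J_{u^N}\se\J_u$, the absolutely continuous parts satisfy $\nabla u^N=\mathbbm{1}_{\{|u|<N\}}\nabla u$, and the jump intensities $\tau_N(u^+)-\tau_N(u^-)$ converge to $u^+-u^-$ dominatedly on $\J_u$; hence $\nl u^N-u\nr_{\bv(\Omega)}\to 0$ and a diagonal argument lets me assume $u$ bounded.

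First I would use the rectifiability of $\J_u$ to cover $\hau^{n-1}$-almost all of it by countably many $C^1$ hypersurfaces; extracting a finite subfamily and taking compact pieces, I obtain for each $k$ a compact $C^1$-manifold $M_k\subset\subset\Omega$ with $\hau^{n-1}(\J_u\setminus M_k)\to 0$ as $k\to\infty$ and, after discarding a null set, with $M_k\cap\J_u$ carrying most of the jump. Writing $R_k\ceq\J_u\setminus M_k$, the quantity $\int_{R_k}|u^+-u^-|\,d\hau^{n-1}$, which is the total variation of the part of the jump I am going to erase, tends to $0$.

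Next comes the construction of $u_k$. In a thin tubular neighbourhood of $M_k$ I would mollify in a one-sided fashion on each of the two sides determined by the normal $\nu_u$, i.e. extend the two traces $u^\pm$ separately and convolve, so that the resulting function is smooth on both sides and still jumps exactly across $M_k\cap\J_u$; far from $M_k$ I would use an ordinary mollification, which replaces the small jump on $R_k$ by an absolutely continuous contribution. Gluing the two regimes with a cutoff adapted to $M_k$ produces $u_k\in\sbv(\Omega)$ that is $C^\infty$ on $\Omega\setminus\overline{M_k}$ and satisfies $\J_{u_k}\se M_k\cap\J_u$. The convergence $\nl u_k-u\nr_{\bv(\Omega)}\to 0$ then splits into the $L^1$-convergence coming from mollification, the convergence of the absolutely continuous derivatives, and the jump term: the jump of $u_k$ lives on $M_k\cap\J_u$ and converges to the full jump measure because $\hau^{n-1}(R_k)\to 0$, while the error introduced by smearing the jump across $R_k$ is controlled by $\int_{R_k}|u^+-u^-|\,d\hau^{n-1}\to 0$.

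Finally I would check the fine properties. The set $\overline{\J_{u_k}}\setminus\J_{u_k}$ is contained in the relative boundary $\partial M_k$ of the manifold pieces, which is $(n-2)$-dimensional and hence $\hau^{n-1}$-negligible, giving $\hau^{n-1}(\overline{\J_{u_k}}\setminus\J_{u_k})=0$; smoothness off $\overline{\J_{u_k}}$ follows from the adapted mollification. I expect the main obstacle to be precisely the behaviour near $\partial M_k$: there the jump must be switched off continuously as one passes from the ``keep the jump'' region around $M_k$ to the ``mollify everything'' region, and the cutoff and the one-sided convolution kernels must be designed so that this transition neither creates spurious jump nor destroys the smoothness or the $\bv$-estimates. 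Controlling the interaction of the one-sided mollification with the $C^1$ (but not smoother) geometry of $M_k$, and ensuring the trace estimates survive the gluing, is the delicate technical core of the argument.
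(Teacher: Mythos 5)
Your proposal does not follow this paper's route: Theorem~\ref{teo_introclassic} is quoted from~\cite{dpfp} and is not re-proved here, and the paper's own result (Theorem~\ref{teo_introapprox}, the CC analogue) is proved by a partition-of-unity scheme that mollifies $u$ on $\Omega\setminus C_k$ for a suitable \emph{compact} set $C_k\se\J_u$, with mollification radii decaying so fast near $C_k$ that $u_k$ inherits the one-sided limits of $u$ there (Lemmas~\ref{lem_giusti}, \ref{lem_giusticc} and~\ref{lem_tra}). Your plan is instead the adapted (one-sided) mollification strategy of~\cite{dpfp} itself. As an outline it is reasonable, but it has a genuine gap exactly at the point you flag and then leave unresolved: the fine properties $\J_{u_k}\se M_k\cap\J_u$ and $\hau^{n-1}(\overline{\J_{u_k}}\setminus\J_{u_k})=0$ do not follow from your construction. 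If you mollify one-sidedly across all of $M_k$, the two one-sided smoothings are built from different data (the two extensions), so for a fixed mollification parameter they will in general differ at points of $M_k\setminus\J_u$: this creates spurious jump points and destroys the inclusion $\J_{u_k}\se\J_u$. Moreover, on the part of $M_k\cap\J_u$ where $|u^+-u^-|$ is arbitrarily small, the jump of $u_k$ (equal to $u^+-u^-$ only up to mollification errors) can vanish on a set that is dense in a portion of $M_k$ of positive measure, so nothing prevents $\overline{\J_{u_k}}\setminus\J_{u_k}$ from having positive $\hau^{n-1}$-measure; your claim that this set is contained in $\partial M_k$ and that $\partial M_k$ is $(n-2)$-dimensional is unjustified (a compact piece of a $C^1$-hypersurface need not be a manifold with small boundary, and a finite union of intersecting pieces need not be a manifold at all --- compare the disjointification at positive distance carried out in Lemma~\ref{lem_unicaipersup}).

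The missing idea, used both in~\cite{dpfp} and in this paper's proof of Theorem~\ref{teo_introapprox}, is a quantitative \emph{lower} bound on the retained jump together with continuity of the traces. One first keeps only $\J_{u,\eta}=\lbrace x\in\J_u:|u^+(x)-u^-(x)|\ge 1/\eta\rbrace\cap B(0,\eta)$ as in~\eqref{eq_Jueta}, which is rectifiable with finite measure, and then applies a Lusin-type theorem to the map $x\mapsto(u^+(x),u^-(x),\nu(x))$ to obtain a compact set $C_k\se\J_u^k\cap M_k$ on which the traces are continuous and which carries all but $O(1/k)$ of $|Du|(\J_u)$ (Lemma~\ref{lem_closedset}). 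The approximation is then built so as to be smooth precisely on $\Omega\setminus C_k$ and to have the same one-sided limits as $u$ at every point of $C_k$; since the jump of $u$ is bounded below on $C_k$ and the traces are continuous there, \emph{every} point of $C_k$ is a jump point of $u_k$, whence $\J_{u_k}=C_k$ is compact, contained in $M_k\cap\J_u$, and essentially closed for trivial reasons. Your truncation step provides an upper bound on the jump, which is not what is needed; without the lower bound and the Lusin compactness, the transition region near the boundary of the ``keep-the-jump'' zone --- which you correctly identify as the delicate core --- cannot be handled, and the two fine properties in the statement remain unproved.
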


Recall that smooth functions are not dense in BV with respect to the BV topology\footnote{Smooth functions are dense in BV only with respect to the so-called strict topology in BV, see e.g.~\cite[Theorem~3.9]{afp}.}, as their closure in BV is the Sobolev space $W^{1,1}$, i.e., BV functions whose derivatives admit no singular part (not even ``nice'' jumps) with respect to the Lebesgue measure.  In this sense, Theorem~\ref{teo_introclassic} provides a class of ``nice'' (although, clearly, not smooth) BV functions that are dense in SBV with respect to the BV topology. As explained in~\cite{dpfp}, this result is sharp and, besides being interesting {\em per se}, it led to the proof of a conjecture by L.~Ambrosio, J.~Bourgain, H.~Brezis and A.~Figalli~\cite{abbf16} (see also~\cite{fms16}) about a formula for a BMO-seminorm (defined as an isotropic version of the BMO-norm introduced in \cite{bbm}) for SBV functions. Before stating our main result we need to briefly introduce the notion of special function of bounded variation in CC spaces. 
A {\em Carnot-Carathéodory space} (see Definition \ref{def_cc}) is the space $\R^n$ endowed with a distance arising from a collection $X=(X_1,\dots,X_m)$ of smooth and linearly independent vector fields satisfying the H\"ormander condition. In this paper, we will deal with {\em equiregular} CC spaces, where a homogeneous dimension $Q$, usually larger than the topological dimension $n$, can be defined. The space $\bv_X(\Omega)$~\cite{cdg,fssc} of functions with bounded $X$-variation (see Definition \ref{def_bvx}) consists of those functions $u$ on an open set $\Omega\subset\R^n$ whose derivatives $X_1u, \dots, X_mu$ in the sense of distributions are represented by a vector-valued measure $D_Xu$ with finite total variation $|D_Xu|$. The space $\bv_X$ has been the subject of intensive studies: see \cite{bu95,dgn98,fgw94,fssc01,fssc03,gn96,garofalonhieu,CapGarAhlfors,DanGarNhi,Selby} and the more recent \cite{agm15,am03,as10,bmp12,cm20,dmv19,magnani02,marchi14,sy03,vittone2012,dontesi,dv,dv19}.

In the classical Euclidean setting the space of SBV functions, first introduced in \cite{degiorgiambrosio}, naturally arises in the study of free discontinuity problems. 
The first contribution of this paper is the introduction of special functions with Bounded Variation in   CC spaces ($\sbv_X$ functions). Recall (\cite{dv}) that, if $u \in \bv_X(\Omega)$, then $D_Xu$ can be decomposed as
\[
D_Xu=D^{\ap}_Xu\: \leb^n+D^s_Xu=D^{\ap}_Xu\: \leb^n+D^j_Xu+D^c_Xu,
\]
where $D^\ap_Xu$ is the approximate $X$-gradient of $u$, $\leb^n$ is the usual Lebesgue measure, $D_X^su$ is the singular part of $D_Xu$, $D^j_Xu$ is the jump part of $D_X^ju$, and $D_X^cu$ is the Cantor part of $D_Xu$. See Section~\ref{sec:preliminari} for precise definitions. 

\begin{defi}\label{def_introsbvx}
 Let $\Omega\subset\R^n$ be an open subset of an equiregular Carnot-Carathéodory space $(\R^n,X)$ and let $u \in \bv_X(\Omega)$. We say that $u$ is a \emph{special function of bounded $X$-variation}, and we write $u \in \sbv_X(\Omega)$, if
\begin{enumerate}
    \item[(i)] $D_X^c u=0$, and
    \item[(ii)] the jump set $\J_u$ of $u$ is a countably $X$-rectifiable set.
\end{enumerate}
\end{defi}

A set is said to be countably $X$-rectifiable (see Definition~\ref{def_Xrectifiable}) if it can be covered, up to a set which is negligible with respect to the Hausdorff measure $\hau^{Q-1}$, by a countable family of $C^1_X$-hypersurfaces (Definition \ref{def_ipersup}), that provide the  intrinsic counterpart in CC spaces of classical $C^1$-hypersurfaces. Recall that, for classical BV functions, the jump set is always countably rectifiable; on the contrary, in CC spaces this -- i.e., the validity of condition (ii) above for every $\bv_X$ function $u$ -- is an important open problem. Let us however recall that, if the CC space satisfies the so-called {\em property $\mathcal{R}$} (``rectifiability'', see Definition \ref{def_pror}), then condition (ii) in Definition \ref{def_introsbvx} is automatically satisfied for every $u \in \bv_X$; see~\cite[Theorem 1.5]{dv}. 
There is a multitude of examples of CC spaces which satisfy  property $\mathcal{R}$, such as Heisenberg groups, step 2 Carnot groups and Carnot groups of type $\star$, see \cite[Theorem 4.3]{dv}. In this paper we tried to work in the widest possible generality, hence the extra requirement (ii) in Definition \ref{def_introsbvx}. 
For this reason, let us also stress that our definition might be a priori different from the definition of SBV function in metric measure spaces. We refer to \cite{AmbrosioMirandaPallaraSBV} for a general overview of SBV functions in metric measure spaces and to \cite{Lahti2020} for an approximation result for BV functions via SBV functions in this context.

In Section~\ref{sec_sbvx} we study several properties of $\sbv_X$ (or locally $\sbv_X$) functions: we collect the main ones in the following theorem, which summarizes (some of) the results stated in Proposition~\ref{prop_defequivsbvx}, Theorem~\ref{teo_sbvxchiuso}, Lemma~\ref{lem_bvbvx}, Theorem~\ref{teo_albertisbvx} and Theorem~\ref{teo_produzionedisbvx}.

\begin{teo}\label{teo_proprietavariesvbx}
    Let $\Omega\subset\R^n$ be an open subset of an equiregular Carnot-Carathéodory space $(\R^n,X)$; then, the following statements hold:
    \begin{enumerate}
        \item[(i)] $u\in \sbv_{X,\loc}(\Omega)$ if and only if $D_X^su = f\:\nu_R\:\hau^{Q-1}\res R$ for some countably $X$-rectifiable set $R\subset\Omega$ with horizontal normal $\nu_R$ and some   $f\in L^1_{\loc}(R,\hau^{Q-1})$;
        \item[(ii)] $\sbv_X(\Omega)$ is a closed subspace of $\bv_X(\Omega)$;
        \item[(iii)] the space $\sbv_{\loc}(\Omega)$ of special function of (Euclidean) locally bounded variation is contained in $ \sbv_{X,\loc}(\Omega)$;
        \item[(iv)] for every $w \in L^1_{\loc}(\Omega;\R^m)$ there exists $u \in \sbv_{X,\loc}(\Omega)$ such that $D^{\ap}_Xu=w$ a.e. in $\Omega$;
        \item[(v)] for every  countably $X$-rectifiable set $R \se \Omega$  oriented by $\nu_R$, every $  \theta \in L^1(\hau^{Q-1} \res R)$ and every $\delta>0$ there exists $u \in \sbv_{X}(\Omega)$ such that
        \[
        D_X^ju \equiv \theta\,  \nu_R\, \hau^{Q-1} \res R, \quad \nl u \nr_{L^1(\Omega)}<\delta, \quad \text{ and }\quad |D_Xu| (\Omega) \leq (2+\delta)\nl \theta  \nr_{L^1(\hau^{Q-1} \res R)}.
        \]
    \end{enumerate}
\end{teo}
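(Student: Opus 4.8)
The statement collects five facts; I would prove the structural characterization (i) first, since (ii) and (iii) reduce to it. For (i), recall from the decomposition $D_X u = D^{\ap}_X u\,\leb^n + D^j_X u + D^c_X u$ that $u\in\sbv_{X,\loc}$ means precisely $D^c_X u=0$ and $\J_u$ countably $X$-rectifiable. If this holds, then $D^s_X u=D^j_X u=(u^+-u^-)\,\nu_u\,\hau^{Q-1}\res\J_u$ by the representation of the jump part, so taking $R=\J_u$, $\nu_R=\nu_u$ and $f=u^+-u^-\in L^1_{\loc}$ gives the asserted form. Conversely, if $D^s_X u=f\,\nu_R\,\hau^{Q-1}\res R$ with $R$ countably $X$-rectifiable, then $D^s_X u$ is concentrated on a set that is $\sigma$-finite with respect to $\hau^{Q-1}$; since $D^c_X u$ vanishes on every such set, $D^c_X u=0$, and the remaining jump part is concentrated on $\J_u\subseteq R$ up to $\hau^{Q-1}$-null sets, whence $\J_u$ is countably $X$-rectifiable. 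The only input beyond the definitions is the explicit representation of $D^j_X u$ on the rectifiable jump set.

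For (ii) I would use that convergence in the $\bv_X$ norm means $u_k\to u$ in $L^1$ together with $|D_X u_k-D_X u|(\Omega)\to0$, i.e. $D_X u_k\to D_X u$ in total variation. Decomposing each measure with respect to $\leb^n$, the absolutely continuous and singular parts are mutually singular, so total variation is additive over them and the Lebesgue decompositions converge separately; in particular $D^s_X u_k\to D^s_X u$ in total variation. Writing $R\ceq\bigcup_k \J_{u_k}$, which is countably $X$-rectifiable, each $D^s_X u_k=D^j_X u_k$ is absolutely continuous with respect to the $\sigma$-finite measure $\hau^{Q-1}\res R$; since such measures form a closed subspace for the total-variation norm (it is isometric to an $L^1$ space), the limit $D^s_X u$ has the form $g\,\hau^{Q-1}\res R$, and the converse in (i) gives $u\in\sbv_X$.

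Part (iii) rests on the identity $D_X u=B\,Du$, where $B=B(x)$ is the smooth matrix of coefficients of $X_1,\dots,X_m$; for $u\in\sbv_{\loc}$ this yields $D^{\ap}_X u=B\nabla u\in L^1_{\loc}$ and a singular part $B(u^+-u^-)\nu_u^E\,\hau^{n-1}\res\J_u^E$ concentrated on the Euclidean rectifiable set $\J_u^E$. On the characteristic part $B\nu_u^E$ vanishes, so the singular part lives on the non-characteristic part, which is countably $X$-rectifiable and on which $\hau^{n-1}$ and $\hau^{Q-1}$ are mutually absolutely continuous by the known comparison/area formula in CC spaces; rewriting the singular part accordingly and invoking (i) gives $u\in\sbv_{X,\loc}$. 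Part (iv) I would then deduce from (iii): since $B$ has rank $m$, set $\widetilde w\ceq B^{\mathsf T}(BB^{\mathsf T})^{-1}w\in L^1_{\loc}(\Omega;\R^n)$ and apply the Euclidean Alberti-type theorem (every $L^1$ field is the approximate gradient of an $\sbv$ function) producing $u\in\sbv_{\loc}(\Omega)$ with $\nabla u=\widetilde w$ a.e.; by (iii) $u\in\sbv_{X,\loc}$, and $D^{\ap}_X u=B\nabla u=BB^{\mathsf T}(BB^{\mathsf T})^{-1}w=w$.

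The genuinely constructive statement (v) is where I expect the main difficulty, and it must be done natively in the CC geometry since $X$-rectifiable sets need not be Euclidean rectifiable. I would first reduce to $R$ contained in a single oriented $C^1_X$ hypersurface $S$, decomposing a general $R$ into countably many such pieces and summing the constructions with tube widths decreasing fast enough to keep supports essentially disjoint and all errors summable. On a single $S=\{\phi=0\}$ with $X\phi\neq0$, I would realize the jump by a one-sided profile $u=\Theta\,\psi(d_S/\eta)\,\mathbbm{1}_{\{d_S>0\}}$, where $d_S$ is a CC signed distance to $S$ with $|X d_S|=1$, $\psi$ decreases from $1$ to $0$ on $[0,1]$, and $\Theta$ is a horizontal Sobolev (Gagliardo-type) extension of $\theta$ into the thin tube with trace $\theta$ on $S$ and trace $0$ on $\{d_S=\eta\}$: this produces exactly $D^j_X u=\theta\,\nu_R\,\hau^{Q-1}\res R$ with no other jump and no Cantor part, while $\|u\|_{L^1}\to0$ as $\eta\to0$. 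The $X$-coarea formula shows the normal part of $D^{\ap}_X u$ contributes $\to\|\theta\|_{L^1}$ to the variation; to reach the sharp constant I would split $\theta=\theta_c+\theta_r$ into a continuous part and a small $L^1$ remainder, so that the tangential horizontal gradient of the extension of $\theta_c$ vanishes as $\eta\to0$ while the crudely estimated contribution of $\theta_r$ is $O(\|\theta_r\|_{L^1})$, giving $|D_X u|(\Omega)\le(2+\delta)\|\theta\|_{L^1(\hau^{Q-1}\res R)}$. \emph{The hard part} is setting up, near a merely $C^1_X$ hypersurface, the signed distance with unit horizontal gradient, the coarea and tubular-neighborhood estimates, and the trace–extension of the rough datum $\theta$ so that all jumps remain on $R$; keeping these errors summable over the countable decomposition is the crux.
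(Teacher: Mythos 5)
Your treatment of (ii), (iii) and (iv) is sound and close to the paper's own: (ii) is the same argument (the Lebesgue decomposition converges separately under $\bv_X$-convergence, and measures of the form $f\,\nu_R\,\hau^{Q-1}\res R$ form a closed, $L^1$-isometric subspace); (iii) follows the same route (characteristic points kill the horizontal pairing, $S_j\setminus S_j^{\ch}$ is a $C^1_X$-hypersurface, and $\hau^{n-1}_E$ and $\hau^{Q-1}$ are mutually absolutely continuous there); and in (iv) your choice $\widetilde w=B^{\mathsf T}(BB^{\mathsf T})^{-1}w$ is correct, and in fact handles explicitly the Gram-matrix normalization that the paper's choice $\nabla u=\sum_j w_jX_j$ leaves implicit. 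The genuine gap is in (i), on which your (ii) (and everything else) rests. You declare that ``the only input beyond the definitions is the explicit representation of $D^j_Xu$'' and quote it as $D^j_Xu=(u^+-u^-)\,\nu_u\,\hau^{Q-1}\res\J_u$. In a general equiregular CC space this is not an off-the-shelf fact, and as written it is not even true: the paper itself remarks that a formula of this type (and then only with an extra geometric density) is known under the additional properties $\mathcal R$ and $\mathcal D$. What holds in the paper's generality, and what Proposition~\ref{prop_defequivsbvx} actually proves, is $D^s_Xu=(u^+-u^-)\,\lambda\,\nu_{\J_u}\,\hau^{Q-1}\res\J_u$ for a suitable positive function $\lambda$; producing this is the core of the proof, done by covering $\J_u$ with pairwise disjoint $C^1_X$-hypersurfaces, noting that each locally separates the space into $X$-regular domains, and invoking the trace theorems of~\cite{vittone2012} together with the representation of the perimeter measure by $\hau^{Q-1}$ from~\cite{AmbrosioAhlfors}. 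Without this step your forward implication is assumed, not proven. (Your converse also silently needs the facts of Theorem~\ref{teo_prop3.76} -- the lower bound $|D_Xu|\geq\lambda|u^+-u^-|\,\hau^{Q-1}\res\J_u$ and its two implications -- to get $\hau^{Q-1}(\J_u\setminus R)=0$ and to kill the Cantor part; these are citable from~\cite{dv}, but they must be invoked.)

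The same issue recurs, in constructive form, in (v). Your tube profile $u=\Theta\,\psi(d_S/\eta)$ on one side of $S$ is a legitimate alternative outline, but the ingredients you yourself call ``the crux'' are precisely the content to be supplied, and they cannot be improvised near a merely intrinsically $C^1_X$ hypersurface (which may be fractal from the Euclidean viewpoint, so no smooth tubular neighborhood or normal flow exists). Concretely you need (a) a Gagliardo-type extension taking $L^1(\hau^{Q-1}\res S)$ data to $W^{1,1}_X$ of a one-sided neighborhood with norm $(1+\delta)\|\theta\|_{L^1}$ and the prescribed trace, and (b) the identification of the trace-pairing measure with $\hau^{Q-1}\res S$. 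Both exist -- (a) is~\cite[Theorem~1.5]{vittone2012}, (b) comes from~\cite[Theorem~4.2]{AmbrosioAhlfors} -- and this is exactly how the paper argues, using a partition of unity on $S$ instead of a global tube. Note, moreover, that even granting (a), the jump your construction produces is $\theta\,\nu$ times the trace/perimeter measure of the one-sided domain, not $\theta\,\nu\,\hau^{Q-1}\res S$: the datum must first be divided by the density $\sigma$ of that measure with respect to $\hau^{Q-1}$ (the paper's substitution $\theta_j=\theta/\sigma_j$), otherwise the identity $D^j_Xu=\theta\,\nu_R\,\hau^{Q-1}\res R$ fails. So (v), like (i), has its key technical content asserted rather than established.
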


Statements $(iii),\ (iv)$ and $(v)$, in particular, provide  meaningful subclasses or  examples of special functions of bounded $X$-variation which, in particular, turn out to form a quite large and interesting space.

We can now to state our main result.

\begin{teo}\label{teo_introapprox}
Let $\Omega$ be an open subset of an equiregular Carnot-Carathéodory space $(\R^n,X)$ and let $u \in \sbv_X(\Omega)$. Then, there exists a sequence of functions $(u_k)_{k \in \N} \subset \sbv_X(\Omega)$ and of $C^1_X$-hypersurfaces $(M_k)_{k \in \N} \subset \Omega$ such that, for every $k \in \N$, $\J_{u_k} \se M_k \cap \J_u$, $\J_{u_k}$ is compact, and
\[
\nl u-u_k \nr_{\bv_X(\Omega)}\xrightarrow{k \to +\infty}0, \qquad u_k \in C^\infty(\Omega \setminus \J_{u_k}).
\]
\end{teo}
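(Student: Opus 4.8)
The plan is to mirror the Euclidean strategy of De Philippis--Fusco--Pratelli, adapted to the CC setting. The key structural input is Theorem~\ref{teo_proprietavariesvbx}(i): since $u\in\sbv_X(\Omega)$ we have $D^s_Xu = f\,\nu_R\,\hau^{Q-1}\res R$ where $R=\J_u$ is countably $X$-rectifiable with horizontal normal $\nu_R$ and $f\in L^1(R,\hau^{Q-1})$. The natural program is to approximate $u$ by functions whose jump set sits inside a single $C^1_X$-hypersurface $M_k$ and is compact, while the approximate gradient is regularized away from $M_k$. I would proceed in three layers: first reduce to the case where the total jump mass is concentrated on a compact piece; then replace that piece by a single $C^1_X$-hypersurface; and finally mollify the ``regular'' part to obtain $C^\infty$ smoothness off $\J_{u_k}$.

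First, I would exhaust $R$ by compact subsets. By countable $X$-rectifiability, $R$ is covered up to $\hau^{Q-1}$-null sets by countably many $C^1_X$-hypersurfaces $S_i$; writing the jump measure as $\mu = f\,\nu_R\,\hau^{Q-1}\res R$, I would fix $\varepsilon>0$ and choose finitely many compact subsets $K_i\subset S_i$ so that the jump mass outside $\bigcup_i K_i$ is smaller than $\varepsilon$. The piece of $u$'s jump outside these sets should be ``absorbed'' into an $\sbv_X$ function of small $\bv_X$-norm: here I would invoke Theorem~\ref{teo_proprietavariesvbx}(v), which lets me realize any prescribed jump density $\theta$ on a rectifiable set by an $\sbv_X$ function whose $L^1$-norm is arbitrarily small and whose total variation is controlled by $(2+\delta)\|\theta\|_{L^1}$. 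Subtracting such a function kills the unwanted jump contribution at the cost of a small $\bv_X$-error, reducing us to a $u$ whose jump is supported on finitely many compact pieces of $C^1_X$-hypersurfaces with $L^1$ density.

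Next, I would merge and regularize the jump support into a single $C^1_X$-hypersurface $M_k$ with $\J_{u_k}\subseteq M_k\cap\J_u$ and $\J_{u_k}$ compact. This is the step where the CC geometry does real work: I would need a covering/gluing argument producing a $C^1_X$-hypersurface through the finitely many compact pieces, using that each piece is already a graph in exponential or adapted coordinates with horizontal normal $\nu_R$, and that $C^1_X$-hypersurfaces are stable under the local perturbations needed to patch overlaps. Once the jump is confined to $M_k$, the complement $\Omega\setminus\J_{u_k}$ carries a function with no jump part and (since $u\in\sbv_X$) no Cantor part, i.e.\ a $W^{1,1}_X$ function locally off $M_k$; I would then mollify along the vector fields $X$ (via a group-convolution or the intrinsic mollification adapted to the CC structure) on $\Omega\setminus\J_{u_k}$ to gain $C^\infty$ regularity there, taking care that the mollification parameter shrinks near $M_k$ so that the jump is preserved and the $\bv_X$-error stays small.

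I expect the principal obstacle to be the middle step: assembling the finitely many compact pieces of distinct $C^1_X$-hypersurfaces into a \emph{single} $C^1_X$-hypersurface $M_k$ while keeping $\J_{u_k}\subseteq M_k\cap\J_u$ and controlling the induced change in $|D_Xu|$. In the Euclidean proof this uses Whitney-type extension and the fact that $C^1$-manifolds can be glued with small curvature cost; in the sub-Riemannian setting one must instead work with the intrinsic regularity of $C^1_X$-hypersurfaces (level sets of functions with continuous nonvanishing horizontal gradient) and verify that the perturbations and the mollification respect the horizontal normal $\nu_R$ so that the reconstructed $u_k$ genuinely lies in $\sbv_X$ with the prescribed jump inclusion. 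A secondary difficulty is ensuring the intrinsic mollification converges in the $\bv_X$-norm (not merely the strict topology) on the region away from $M_k$; this should follow because off the jump the surviving singular part vanishes, so the relevant approximation is essentially a $W^{1,1}_X$ smoothing, but the commutator estimates between mollification and the vector fields $X$ must be handled with the Friedrichs-type lemmas available in the CC literature cited in the introduction.
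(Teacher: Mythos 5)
Your overall architecture---confine the jump of $u$ to a compact core sitting inside a single $C^1_X$-hypersurface, then smooth $u$ off that core with a mollification scale shrinking near it---is indeed the skeleton of the paper's proof. But the step you yourself flag as the ``principal obstacle'' is a genuine gap as you propose to resolve it, and, more importantly, it is a gap you never needed to open. You propose to assemble the finitely many compact pieces $K_i\subset S_i$ into a single $C^1_X$-hypersurface by a Whitney-type extension/gluing argument ``through'' the pieces, patching overlaps of graphs in adapted coordinates. No such extension technology is available in a general equiregular CC space (intrinsic $C^1_X$ extension is delicate even in Heisenberg groups), and nothing in your sketch substitutes for it. The paper's point (Lemma~\ref{lem_unicaipersup}) is that no gluing is required at all: Definition~\ref{def_ipersup} is purely local and does not require connectedness, so a \emph{finite union of $C^1_X$-pieces at pairwise positive distance is already a single $C^1_X$-hypersurface}. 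One therefore shrinks each $S_i$ away from its boundary and away from the previously chosen pieces (at an arbitrarily small cost in $\hau^{Q-1}$-measure, hence, by Theorem~\ref{teo_prop3.76}, in $|D_Xu|$-measure) and takes the disjoint union as $M_\delta$. With this observation your obstacle dissolves; without it, your middle step has no proof.

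Two further discrepancies are worth noting. First, your preliminary reduction---subtracting an auxiliary $\sbv_X$ function built via Theorem~\ref{teo_proprietavariesvbx}(v) to kill the jump mass outside the compact pieces---is not what the paper does and is unnecessary: in the paper the small discarded jump mass ($|D_Xu|(\J_u\setminus \J^k_u)$, $|D_Xu|(\J^k_u\setminus M_k)$, $|D_Xu|((\J_u\cap M_k)\setminus C_k)$, each $<1/k$ by Construction~\ref{Con_JeM}, Lemma~\ref{lem_closedset}) is simply smeared out by the mollification and absorbed directly into the final estimate. Second, you appeal to ``group convolution or intrinsic mollification,'' but a general CC space carries no group structure; the paper uses ordinary Euclidean convolution localized by a partition of unity subordinate to dyadic shells around the compact core $C_k$, with Friedrichs-type commutator remainders $R\lski$, $S\lski$ estimated in Proposition~\ref{pro_estRS}. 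Finally, the real technical heart---which you acknowledge only in passing---is the verification that the smoothed function retains \emph{exactly} the traces $u^\pm$ and normal $\nu_{\J_u}$ on $C_k$ (Lemmas~\ref{lem_giusti}--\ref{lem_tra}, exploiting the super-polynomially fast decay of the mollification parameters near $C_k$), combined with the same-trace cancellation Lemma~\ref{lem_sametrace} giving $|D_X(u-u_k)|(C_k)=0$; this is also what makes $\J_{u_k}\se C_k$ compact and $\J_{u_k}\se M_k\cap\J_u$, properties your sketch asserts but does not establish.
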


Our proof of Theorem~\ref{teo_introapprox} differs from the one of Theorem \ref{teo_introclassic} in that, rather than using  mollifications with variable kernel as in~\cite{dpfp}, we exploit a partition-of-the-unity argument (reminiscent of~\cite{ag78,fssc,vittone2012,vittone22,stokes}) that allows to approximate $u$ out of a fixed compact set  $C_k$. {\em A posteriori,} the set $C_k$ coincides with the jump set $\J_{u_k}$, which in particular turns out to be compact itself, thus providing a slight improvement in Theorem~\ref{teo_introclassic}.

We believe that Theorem~\ref{teo_introapprox} will play a role in a possible, future BMO-type characterization of $\bv_X$ functions {\em à la} Ambrosio-Bourgain-Brezis-Figalli, \cite{abbf16}: this, in fact, was one of the original motivations of our work, and will be the subject of further investigations. \medskip

{\bf Acknowledgments.} M.~D.~M. and D.~V. are supported by University of Padova and  GNAMPA of INdAM. D.~V. is also supported by INdAM project {\em VAC\&GMT} and by PRIN 2022PJ9EFL project {\em Geometric Measure Theory: Structure of Singular Measures, Regularity Theory and Applications in the Calculus of Variations} funded by the European Union - Next Generation EU, Mission 4, component 2 - CUP:E53D23005860006. The authors warmly thank the anonymous referee for her/his careful reading
and the precious suggestions.

\section{Definitions and preliminary results}\label{sec:preliminari}

\begin{defi}\label{def_cc}
Let $1\leq m \leq n$ be integers and let $X=(X_1,\dots,X_m)$ be a $m$-tuple of smooth and linearly independent vector fields on $\R^n$. We say that an absolutely continuous curve $\gamma\colon[0,T] \to \R^n$ is an \emph{$X$-subunit path} joining $p$ and $q$ if $\gamma(0)=p$, $\gamma(T)=q$ and there exist $h_1,\dots h_m \in L^\infty([0,T])$ such that $\sum_{j=1}^m h_j^2 \leq 1$ and 
\[
\gamma'(t)=\sum_{j=1}^m h_j(t)X_j(\gamma(t)), \quad \text{for a.e.\ $t\in [0,T]$.}
\]
For every $p,q \in \R^n$ we  define  
\[
d(p,q) \ceq \inf \lbrace T>0: \text{ there exists an $X$-subunit path $\gamma$ joining $p$ and $q$}\rbrace,
\]
where we agree that $\inf \emptyset \ceq +\infty$.

By the Chow–Rashevskii Theorem (see for instance \cite[Subsection 3.2.1]{abb}), if for every $p \in \R^n$ the linear span of all iterated commutators of the vector fields $X_1,\dots, X_m$ computed at $p$ has dimension $n$ (i.e. $X_1,\dots,X_m$ satisfy the \emph{H\"ormander condition}), then $d$ is a distance: the latter means that for every couple of points of $\R^n$ there always exists a $X$-subunit path joining them. In this case we say that $(\R^n,X)$ is a \emph{Carnot-Carathéodory space} of \emph{rank} $m$ and $d$ is the associated \emph{Carnot-Carathéodory distance}.         

For every $p \in \R^n$ and for every $i \in \N$ we denote by $\mathfrak{L}^i(p)$ the linear span of all the commutators of $X_1,\dots,X_m$ up to order $i$ computed at $p$. We say that a Carnot-Carathéodory space $(\R^n,X)$ is \emph{equiregular} if there exist natural numbers $n_0,n_1,\dots,n_s$ such that 
\[
0=n_0<n_1<\cdots<n_s=n \text{ and } \dim  \mathfrak{L}^i(p)=n_i, \quad \forall p \in \R^n, \forall i\in\{1,\dots, n\}.
\]
 The natural number $s$ is called \emph{step} of the Carnot-Carathéodory space. If $(\R^n,X)$ is equiregular, then the \emph{homogeneous dimension} is $Q \ceq \sum_{i=1}^si(n_i-n_{i-1})$.
\end{defi}

\begin{notations}
{\bf In the following, $\mathbf{(\R^n,X)}$ denotes an equiregular Carnot-Carathéodory space associated with the family $\mathbf{X=(X_1,\dots,X_m)}$.} We use $d$ to denote the Carnot-Carathéodory distance associated with $X$, $B(\cdot,\cdot)$ to denote the associated open balls, $\hau^k$ to denote the associated Hausdorff $k$-measure and $\mathcal S^k$ to denote the associated spherical Hausdorff $k$-measure; on the other hand we will denote by $d_E$ the usual Euclidean distance, by $B_E(\cdot,\cdot)$ the associated open balls, by $\hau^k_E$ the associated Hausdorff $k$-measure and by $\mathcal S^k_E$ the associated spherical Hausdorff $k$-measure. By $\Omega \se (\R^n,X)$ we denote a fixed open set and by $Q$ we denote the homogeneous dimension of $(\R^n,X)$.  Later we will also use the following notation: 
\begin{itemize}
\item for every $1 \leq i \leq m$ and $x \in \R^n$ we write
\[
X_i(x)=(a_{i,1}(x),\dots,a_{i,n}(x))
\]
where $a_{i,t} \in C^\infty(\R^n)$ for $1 \leq t \leq n$;
\item for every $1 \leq i \leq m$ and $x \in \R^n$ we write
\[
(\div X_i)(x) \ceq \sum_{t=1}^n \frac{\pp a_{i,t}}{\pp x_t}(x)
\]
\item for every $1 \leq i \leq m$, $\vp \in C^1(\Omega)$ and $x \in \Omega$ we write
\[
(X_i\vp)(x) \ceq \sum_{t=1}^n a_{i,t}(x)\frac{\pp \vp}{\pp x_t}(x);
\]
\item for every $1 \leq i \leq m$ we denote by $X_i^*$ the formal adjoint of $X_i$, i.e., for every $\vp \in C^1(\Omega)$, $x \in \Omega$ we write
\[
(X_i^*\vp)(x) \ceq \sum_{t=1}^n \frac{\pp (a_{i,t}\vp)}{\pp x_t}(x);
\]
\item given a Radon measure $\mu$, we use the notation
\[
\fint_A u d \mu \ceq \frac{1}{\mu(A)}\int_A u d \mu,
\]
to denote the average integral of a measurable function $u$ on a $\mu$-measurable set $A$ with $\mu(A)>0$;
\item we will consider a Riemannian metric, namely a smoothly varying family of scalar products $\langle \cdot, \cdot \rangle_p=\langle\cdot,\cdot \rangle\colon \mathbb R^n\times \mathbb R^n\to \R$ which makes the horizontal vectors $X_1,\dots,X_m$ orthonormal at every point $p\in \mathbb R^n$.
\end{itemize}
\end{notations}
\begin{defi}\label{def_bvx}
We say that $u \in L^1_{\loc}(\Omega)$ is a \emph{function of locally bounded $X$-variation}, and we write $u \in \bv_{X,\loc}(\Omega)$, if there exists a $\R^m$-valued Radon measure $D_X u=(D_{X_1}u,\dots D_{X_m}u)$ on $\Omega$ such that, for every open set $A \subset \subset \Omega$, for every $1 \leq i \leq m$ and for every $\vp \in C^1_c(A)$ one has 
\[
\int_A \vp d(D_{X_i}u)=-\int_A u X_i^*\vp d \leb^n.
 \]
Moreover, if $u \in L^1(\Omega)$ and $D_X u$ has bounded total variation $|D_X u|$, then we say that $u$ has \emph{bounded $X$-variation} and we write $u \in \bv_X(\Omega)$. 
\end{defi}
\begin{defi} 
For every $u \in \bv_X(\Omega)$ we define the norm
\[
\nl u \nr_{\bv_X(\Omega)} \ceq \nl u \nr_{L^1(\Omega)}+|D_Xu|(\Omega).
\]
The space $\bv_X(\Omega)$ equipped with the above norm is a Banach space.
\end{defi}
\begin{defi}
For every $u \in \bv_X(\Omega)$ we decompose
\[
D_Xu=D_X^au+D_X^su
\]
where $D_X^au$ denotes the \emph{absolutely continuous part} of $D_Xu$ (with respect to the usual Lebesgue measure $\leb^n$) and $D_X^su$ denotes the \emph{singular part} of $D_Xu$.
\end{defi}

\begin{defi}
    We say that a measurable set  $E \se \R^n$ has \emph{locally finite $X$-perimeter} (respectively, \emph{finite $X$-perimeter}) in $\Omega$ if its characteristic function $\chi_E$ belongs to $\bv_{X,\loc}(\Omega)$ (respectively, $\chi_E \in \bv_X(\Omega)$). 
    In such a case we define the \emph{$X$-perimeter measure} $P^X_E$ of $E$ as $P^X_E:=|D_X \chi_E|$.
\end{defi}

\begin{defi}\label{def_c1x}
Let $\Omega \se (\R^n,X)$ be an open set and $f\colon \Omega \to \R$. We say that $f \in C^1_X(\Omega)$ if $f$ is continuous and its \emph{horizontal gradient}  $Xf \ceq (X_1f,\dots,X_mf)$,   in the sense of distributions, is represented by a continuous function.
\end{defi}

\begin{defi}
    Let $u \in L^1_{\loc}(\Omega)$, $z \in \R$ and $p \in \Omega$. We say that $z$ is the \emph{approximate limit} of $u$ at $p$ if
    \[
    \lim_{r \to 0} \fint_{B(p,r)}|u-z|d\leb^n=0.
    \]
If the approximate limit of $u$ at $p$ exists, it is also unique (see \cite[Definition 2.19]{dv}). We hence denote by $u^\star(p)$ the approximate limit of $u$ at $p$ and by $\mathcal{S}_u$ the subset of points in $\Omega$ where $u$ does not admit an approximate limit. 
\end{defi}

\begin{defi}\label{def_approxdiff}
    Let $u \in L^1_{\loc}(\Omega)$ and $p \in \Omega \setminus \mathcal{S}_u$. We say that $u$ is \emph{approximately $X$-differentiable} at $p$ if there exist a neighbourhood $U\subset \Omega$ of $p$ and $f \in C^1_X(U)$ such that $f(p)=0$ and
    \[
    \lim_{r \to 0}\fint_{B(p,r)} \frac{|u-u^\star (p)-f|}{r}d \leb^n =0.
    \]
    The set of points in $\Omega$ where $u$ is approximately $X$-differentiable is denoted by $\mathcal{D}_u$.
The vector $Xf(p) \in \R^m$ is uniquely determined (see~\cite[Proposition 2.30]{dv}): we call it \emph{approximate $X$-gradient} of $u$ at $p$ and we denote it by $D^\ap_X u(p)$. Similarly, we write  $D^\ap_{X_i}u(p):=X_if(p)$ for every $i \in \lbrace 1,\dots, m \rbrace$.
\end{defi}

The next two results collect some of the ``fine'' properties of $\bv_X$ functions proved in~\cite{dv}.

\begin{teo}[{\cite[Theorem 1.1]{dv}}]  \label{teo_gradapprox}  
Let $u \in \bv_X(\Omega)$. Then $u$ is approximately $X$-differentiable at $\leb^n$-almost every point of $\Omega$. Moreover, the approximate $X$-gradient coincides $\leb^n$-almost everywhere with the density of $D^a_Xu$ with respect to $\leb^n$.
\end{teo}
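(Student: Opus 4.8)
The plan is to combine the Lebesgue--Besicovitch differentiation theorem (available since $\leb^n$ is doubling with respect to the CC balls $B(\cdot,\cdot)$ in the equiregular setting) with a classical $(1,1)$-Poincaré inequality for $\bv_X$ functions and a telescoping estimate over dyadic scales. First I would write the Radon--Nikodym decomposition $D_Xu=w\,\leb^n+D^s_Xu$ with $w\in L^1_{\loc}(\Omega;\R^m)$ the density of $D^a_Xu$, and fix a full-measure set of ``good'' points $p\in\Omega$ at which simultaneously: $p$ is a Lebesgue point of $u$ (so $p\in\Omega\setminus\mathcal S_u$ and $u^\star(p)$ is the Lebesgue value), $p$ is a Lebesgue point of $w$, and the singular part has vanishing density, $\lim_{r\to0}\frac{|D^s_Xu|(B(p,r))}{\leb^n(B(p,r))}=0$. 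The last two facts are exactly the differentiation theorem for the doubling measure $\leb^n$ applied to $w\in L^1_{\loc}$ and to $|D^s_Xu|\perp\leb^n$.

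At such a $p$ I would produce the candidate affine model. Since $X_1(p),\dots,X_m(p)$ are linearly independent, elementary linear algebra yields a \emph{linear} (hence $C^\infty\subset C^1_X$) function $f$ with $f(p)=0$ and $X_if(p)=w_i(p)$ for every $i$; as $f$ is linear and the $X_i$ are smooth, $Xf$ is continuous near $p$ with $Xf(p)=w(p)$. Setting $v\ceq u-f$, one has $D_Xv=(w-Xf)\,\leb^n+D^s_Xu$, whence
\[
\frac{|D_Xv|(B(p,r))}{\leb^n(B(p,r))}\le \fint_{B(p,r)}|w-Xf|\,d\leb^n+\frac{|D^s_Xu|(B(p,r))}{\leb^n(B(p,r))}\xrightarrow{r\to0}0,
\]
using the Lebesgue point property of $w$, the continuity of $Xf$ with $Xf(p)=w(p)$, and the vanishing singular density. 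Note also that $v^\star(p)=u^\star(p)$, since $f$ is continuous with $f(p)=0$.

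The heart of the argument is then to upgrade this super-linear decay of $|D_Xv|$ into approximate flatness of $v$ at $p$. Writing $\eta(\rho)\ceq\frac{|D_Xv|(B(p,\rho))}{\leb^n(B(p,\rho))}\to0$, the $\bv_X$ Poincaré inequality gives $\fint_{B(p,r)}|v-v_{B(p,r)}|\,d\leb^n\le C\,r\,\eta(\lambda r)$ for structural constants $C,\lambda\ge1$. Comparing consecutive dyadic averages and using the doubling constant $C_D$,
\[
|v_{B(p,2^{-j-1}r)}-v_{B(p,2^{-j}r)}|\le C_D\,C\,2^{-j}r\,\eta(\lambda 2^{-j}r),
\]
so summing the telescoping series and letting $j\to\infty$ (where $v_{B(p,2^{-j}r)}\to v^\star(p)$, as $p$ is a Lebesgue point of $v$) yields $|v_{B(p,r)}-v^\star(p)|\le 2C_DC\,r\sup_{0<\rho\le\lambda r}\eta(\rho)=o(r)$. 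Combining this with the Poincaré bound gives $\fint_{B(p,r)}|v-v^\star(p)|\,d\leb^n=o(r)$, i.e.
\[
\lim_{r\to0}\fint_{B(p,r)}\frac{|u-u^\star(p)-f|}{r}\,d\leb^n=0,
\]
which is precisely approximate $X$-differentiability at $p$; by uniqueness of the approximate $X$-gradient (\cite[Proposition 2.30]{dv}) we conclude $D^\ap_Xu(p)=Xf(p)=w(p)$, giving both the a.e.\ differentiability and the identification with the density of $D^a_Xu$.

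I expect the main obstacle to be the dyadic telescoping step and the interplay of the Poincaré dilation factor $\lambda$ with the doubling property: one must ensure the summed series converges to the \emph{correct} centering constant $u^\star(p)$ with the sharp $o(r)$ rate, rather than the merely $O(r)$ behaviour that the crude estimate $|f|\le \mathrm{Lip}(f)\,d_E(\cdot,p)\le Cr$ on $B(p,r)$ would suggest. A secondary technical point is to verify that all three differentiation statements hold off a single $\leb^n$-null set, which again rests on the doubling property of $\leb^n$ with respect to the CC balls.
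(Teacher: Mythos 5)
The first thing to note is that the paper does not prove Theorem~\ref{teo_gradapprox} at all: it is imported verbatim from \cite[Theorem 1.1]{dv}, so there is no internal proof to compare yours against. Your blind argument is, as far as I can check, correct, and it amounts to reconstructing a proof of the cited result along the classical Calder\'on--Zygmund route: pick a full-measure set of points that are simultaneously Lebesgue points of $u$ and of the density $w$ of $D^a_Xu$ and points of vanishing density of $|D^s_Xu|$; build the Euclidean-linear competitor $f$ with $\langle X_i(p),c\rangle=w_i(p)$ (legitimate precisely because $X_1(p),\dots,X_m(p)$ are linearly independent, and admissible because Definition~\ref{def_approxdiff} only requires $f\in C^1_X$ with $f(p)=0$); then upgrade the decay $|D_X(u-f)|(B(p,r))=o(\leb^n(B(p,r)))$ to the $o(r)$ estimate via Poincar\'e plus dyadic telescoping, concluding with the uniqueness statement \cite[Proposition 2.30]{dv}. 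In a written version, though, the tools you use need correct attribution and one of them needs an extra step: the local doubling of $\leb^n$ with respect to CC balls is Nagel--Stein--Wainger \cite{nsw}; the $(1,1)$-Poincar\'e inequality is Jerison's theorem for H\"ormander vector fields (see also \cite{gn96}), and since you apply it to $v=u-f\in\bv_{X,\loc}$ rather than to a Sobolev function, it must be extended to $\bv_X$ by the Meyers--Serrin-type approximation of \cite{fssc} --- standard, but not skippable. Also, do not call the differentiation theorem ``Lebesgue--Besicovitch'': the Besicovitch covering property genuinely fails for CC balls (already in the Heisenberg group); what saves you, as you yourself note, is that $\leb^n$ is locally doubling, so the Vitali-type differentiation theorem applies both to $w\in L^1_{\loc}$ and to $|D^s_Xu|\perp\leb^n$. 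Finally, the two obstacles you anticipate are non-issues in your own setup: the crude bound $|f|\le Cr$ on $B(p,r)$ (which uses $d_E\lesssim d$ locally) is needed only to identify the limit of the dyadic averages as $u^\star(p)$, while the sharp $o(r)$ rate comes from the telescoping sum $\sum_j 2^{-j}r\,\eta(\lambda 2^{-j}r)\le 2r\sup_{0<\rho\le\lambda r}\eta(\rho)$, exactly as you arranged it.
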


\begin{teo}[{\cite[Theorem 1.3]{dv}}] \label{teo_prop3.76}
	There exists $\lambda\colon \R^n\to(0,+\infty)$ locally bounded away from 0 such that, for every open set $\Omega\subset\R^n$  and every $u\in \bv_X(\Omega)$
	\[
	|D_Xu|\geq \lambda|u^+-u^-| \hau^{Q-1}\res \mathcal J_u.
	\]
Moreover, for every Borel set $B\subseteq \Omega$ the following implications hold:
	\begin{align}
		&\hau^{Q-1}(B)=0\quad \Rightarrow\quad |D_Xu|(B)=0;\label{1implication}\\
&	\label{2implication}
		\hau^{Q-1}(B)<+\infty \text{ and } B\cap\mathcal{S}_u=\emptyset\quad \Rightarrow \quad |D_Xu|(B)=0.
	\end{align}
\end{teo}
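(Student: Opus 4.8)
The plan is to deduce all three assertions from the coarea formula for $\bv_X$ functions together with the fine structure of sets of finite $X$-perimeter. Recall that the coarea formula yields, as measures on $\Omega$,
\[
|D_Xu|=\int_{\R}P^X_{E_t}\,dt,\qquad E_t\ceq\{u>t\},
\]
so that everything reduces to a careful analysis of the superlevel sets $E_t$. The essential external input I would invoke is the two-sided density comparison for the $X$-perimeter measure (in the spirit of the results of Ambrosio and of Franchi--Serapioni--Serra Cassano): there exist $\lambda\colon\R^n\to(0,+\infty)$, locally bounded away from $0$, and $\Lambda$ locally bounded, such that for every set $E$ of locally finite $X$-perimeter
\[
\lambda\,\hau^{Q-1}\res\partial^{*,X}E\;\le\;P^X_E\;\le\;\Lambda\,\hau^{Q-1}\res\partial^{*,X}E,
\]
where $\partial^{*,X}E$ is the measure-theoretic (essential) boundary of $E$. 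Establishing these comparisons is, in my view, the genuine obstacle: the lower bound encodes a uniform density estimate $P^X_E(B(p,r))\gtrsim\lambda(p)\,r^{Q-1}$ for $p\in\partial^{*,X}E$, whose constant reflects the local geometry of the CC balls and must be kept away from $0$ on compact sets.

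Granting this, I would first prove the main inequality. For $\hau^{Q-1}$-a.e.\ $p\in\J_u$ one may orient so that $u^-(p)<u^+(p)$, and for each $t\in(u^-(p),u^+(p))$ the point $p$ has density neither $0$ nor $1$ for $E_t$, hence $p\in\partial^{*,X}E_t$. Combining the lower comparison with a Fubini argument in $t$,
\[
|D_Xu|\res\J_u=\int_\R P^X_{E_t}\res\J_u\,dt\;\ge\;\int_\R\lambda\,\hau^{Q-1}\res(\partial^{*,X}E_t\cap\J_u)\,dt\;\ge\;\lambda\,|u^+-u^-|\,\hau^{Q-1}\res\J_u .
\]
The points requiring care are the joint measurability in $(t,p)$ and the identification of $\partial^{*,X}E_t\cap\J_u$ with $\{u^-<t<u^+\}$.

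For implication~\eqref{1implication}, if $\hau^{Q-1}(B)=0$ then $\hau^{Q}(B)=0$, whence $\leb^n(B)=0$ (the top-dimensional CC Hausdorff measure being comparable to $\leb^n$); thus the absolutely continuous part $D_X^{\ap}u\,\leb^n$ charges no mass on $B$. For the singular part, the upper comparison gives $P^X_{E_t}(B)\le\Lambda\,\hau^{Q-1}(\partial^{*,X}E_t\cap B)=0$ for a.e.\ $t$, and the coarea formula yields $|D_Xu|(B)=\int_\R P^X_{E_t}(B)\,dt=0$.

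Finally, for~\eqref{2implication}, fix $B$ with $\hau^{Q-1}(B)<+\infty$ and $B\cap\mathcal S_u=\emptyset$. Every $p\in B$ admits an approximate limit $u^\star(p)$, so for $t\neq u^\star(p)$ the set $E_t$ has density $0$ or $1$ at $p$, giving $\partial^{*,X}E_t\cap B\subseteq\{p\in B:u^\star(p)=t\}$. By the upper comparison,
\[
|D_Xu|(B)=\int_\R P^X_{E_t}(B)\,dt\le\Lambda\int_\R\hau^{Q-1}\big(\{p\in B:u^\star(p)=t\}\big)\,dt .
\]
Since the sets $\{u^\star=t\}$ are pairwise disjoint and $\hau^{Q-1}(B)<+\infty$, at most countably many of them carry positive $\hau^{Q-1}$-measure; hence the integrand vanishes for Lebesgue-a.e.\ $t$ and $|D_Xu|(B)=0$. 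The only delicate step here is to justify, via the upper density estimate and the finiteness of $\hau^{Q-1}(B)$, that $P^X_{E_t}(B)$ is dominated uniformly enough by $\hau^{Q-1}(\{u^\star=t\}\cap B)$ to integrate in $t$.
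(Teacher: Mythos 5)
This theorem is not proved in the paper at all: it is quoted verbatim from \cite[Theorem 1.3]{dv}, so the only meaningful comparison is with the proof given in that reference. Your strategy --- coarea formula plus a two-sided comparison between $P^X_E$ and $\hau^{Q-1}\res\partial^*E$ --- is essentially the machinery on which the proof in \cite{dv} rests. The comparison you single out as the ``genuine obstacle'' is not something you need to prove from scratch: it is Ambrosio's representation theorem for perimeter measures in Ahlfors-regular spaces supporting a Poincar\'e inequality (\cite[Theorem 4.2]{AmbrosioAhlfors}, invoked elsewhere in this very paper), which applies \emph{locally} in an equiregular CC space thanks to the Nagel--Stein--Wainger ball-box estimates and the $(1,1)$-Poincar\'e inequality for H\"ormander vector fields; this localization is exactly why $\lambda$ is only \emph{locally} bounded away from $0$. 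Granting that input, your deductions of the main inequality and of both implications are sound: the inclusion $\{p:u^-(p)<t<u^+(p)\}\se\partial^*E_t$ follows from the half-ball volume asymptotics (\cite[Proposition 2.1.5]{dontesi}), and your disjointness argument for \eqref{2implication} is exactly the right one.

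Two technical points deserve attention. First, in the Fubini step you cannot invoke Tonelli on $\J_u\times\R$ outright: at this stage $\hau^{Q-1}\res\J_u$ is not known to be $\sigma$-finite (its $\sigma$-finiteness is itself one of the fine properties established in \cite{dv}, and the sets $\J_{u,\eta}$ of Construction~\ref{Con_JeM} are shown to have finite measure \emph{using} the present theorem, so assuming it would be circular). Only the one-sided inequality
\[
\int_\R \hau^{Q-1}\big(\{p\in A: u^-(p)<t<u^+(p)\}\big)\,dt\;\geq\;\int_A (u^+-u^-)\,d\hau^{Q-1}
\]
is needed, and it can be obtained without any $\sigma$-finiteness by discretizing $u^\pm$ into level strips $\{j2^{-k}\le u^+-u^-<(j+1)2^{-k}\}\cap\{i2^{-k}\le u^-<(i+1)2^{-k}\}$: each strip lies in $\partial^*E_t$ for all $t$ in an interval of length comparable to $u^+-u^-$, and one only integrates in $t$ the measurable minorants coming from the coarea formula. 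Second, your closing worry about ``uniform domination'' in the proof of \eqref{2implication} is a non-issue: after exhausting $B$ by compact pieces (needed anyway, since $\Lambda$ is only locally bounded), the pointwise-in-$t$ bound $P^X_{E_t}(B)\le \Lambda_K\,\hau^{Q-1}(\{u^\star=t\}\cap B)$ suffices, because the right-hand side vanishes for all but countably many $t$; no uniformity in $t$ is required to conclude $\int_\R P^X_{E_t}(B)\,dt=0$.
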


We  now  spend a few words about intrinsically $C^1$ (or $C^1_X$) hypersurfaces and the notion of $X$-rectifiability.

\begin{defi}\label{def_ipersup}
We say that $S \se (\R^n,X)$ is a \emph{$C^1_X$-hypersurface} if for every $p \in S$ there exist $r>0$ and $f \in C^1_X(B(p,r))$ such that the following facts hold:
\begin{enumerate}
\item[(i)]$ S \cap B(p,r)=\lbrace q \in B(p,r):f(q)=0 \rbrace$,
\item[(ii)]$Xf\neq 0$ on $B(p,r)$.
\end{enumerate}
We define the \emph{horizontal normal} to $S$ at $p \in S$ as
\[
\nu_S(p) \ceq \frac{Xf(p)}{|Xf(p)|}.
\]
Notice that $\nu_S(p)$ is well defined up to a sign and, in particular, it does not depend on the choice of $f$, see \cite[Corollary 2.14]{dv}.
\end{defi}

\begin{defi}\label{def_Xrectifiable}
Let $S \se (\R^n,X)$. We say that $S$ is \emph{countably $X$-rectifiable} if there exists a family $\lbrace S_h: h \in \N \rbrace$ of $C^1_X$-hypersurfaces such that
\[
\hau^{Q-1}\left( S \setminus \bigcup_{h \in \N}S_h \right)=0.
\]
  Moreover, if $\hau^{Q-1}(S)<+\infty$, we say that $S$ is \emph{$X$-rectifiable}. We define the \emph{horizontal normal} of a countably $X$-rectifiable set $S$ at $p \in S$ as 
\[
\nu_S(p) \ceq \nu_{S_h}(p) \text{ if }p \in S_h \setminus \bigcup_{k<h}S_k.
\]
Notice that $\nu_S$ is well defined, up to a sign, $\hau^{Q-1}$-a.e., see \cite[Proposition 2.18]{dv}.
\end{defi}

The following Lemma provides an equivalent definition of $X$-rectifiability; although not difficult and probably quite known (see \cite[Lemma 2.4]{JNGVIMRN} for a proof  in Heisenberg groups),  we include a proof for the sake of completeness. 
\begin{lem}\label{lem_unicaipersup}
A set $R \se (\R^n,X)$ is  $X$-rectifiable if and only if, for every $\ve>0$, there exists a $C^1_X$-hypersurface $S_\ve \se (\R^n,X)$ such that $\hau^{Q-1}(S_\ve)<\infty$ and
\[
\hau^{Q-1}(R \setminus S_\ve)<\ve.
\]
\end{lem}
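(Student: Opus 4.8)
The plan is to prove the two implications separately, the forward one (rectifiable $\Rightarrow$ single-surface approximation) being the substantive direction. For the easy converse, suppose that for each $\varepsilon>0$ such an $S_\varepsilon$ exists. I first note that $R$ has finite $\hau^{Q-1}$-measure, since $R\subseteq(R\cap S_1)\cup(R\setminus S_1)$ and both pieces have finite measure (the first is $\le\hau^{Q-1}(S_1)<\infty$, the second $<1$). Then, choosing $S_{1/k}$ for each $k\in\N$, the countable family $\{S_{1/k}\}$ covers $R$ up to a $\hau^{Q-1}$-null set, because $\hau^{Q-1}(R\setminus\bigcup_k S_{1/k})\le\hau^{Q-1}(R\setminus S_{1/k})<1/k$ for every $k$; hence $R$ is $X$-rectifiable by Definition~\ref{def_Xrectifiable}.

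For the forward direction, let $\{S_h\}_{h\in\N}$ be $C^1_X$-hypersurfaces with $\hau^{Q-1}(R\setminus\bigcup_h S_h)=0$ and $\hau^{Q-1}(R)<\infty$. I would first replace $R$ by a Borel hull $\hat R\supseteq R$ of the same (finite) measure (Borel regularity of $\hau^{Q-1}$), disjointify by setting $\hat R_h:=(\hat R\cap S_h)\setminus\bigcup_{k<h}S_k$, and observe that these are pairwise disjoint Borel sets of finite measure whose union covers $R$ up to a null set. Since $\sum_h\hau^{Q-1}(\hat R_h)\le\hau^{Q-1}(\hat R)<\infty$, I can fix $N$ so large that $\hau^{Q-1}(R\setminus\bigcup_{h\le N}\hat R_h)<\varepsilon/2$. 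As $(\R^n,d)$ is Polish (the CC distance induces the Euclidean topology), each finite Borel measure $\hau^{Q-1}\res\hat R_h$ is Radon, so I extract compact sets $K_h\subseteq\hat R_h$ with $\hau^{Q-1}(\hat R_h\setminus K_h)<\varepsilon/(2N)$; the $K_h$ are then pairwise disjoint compacta with $K_h\subseteq S_h$.

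The crux is to assemble $K_1,\dots,K_N$ into a single $C^1_X$-hypersurface. Since the $K_h$ are compact and pairwise disjoint, they lie at positive mutual distance, so I can surround them by pairwise disjoint open sets. Covering each $K_h$ by finitely many balls on which $S_h$ is a defining level set — these pieces having finite $\hau^{Q-1}$-measure by the local finiteness of the measure of $C^1_X$-hypersurfaces — I obtain pairwise disjoint open neighbourhoods $U_h\supseteq K_h$ with $\hau^{Q-1}(S_h\cap U_h)<\infty$, and I set $S_\varepsilon:=\bigcup_{h\le N}(S_h\cap U_h)$. The disjointness of the $U_h$ guarantees that near any of its points $S_\varepsilon$ coincides with a single $S_h$, so the defining function of $S_h$ certifies $S_\varepsilon$ as a $C^1_X$-hypersurface of finite measure. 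Since $\bigcup_h K_h\subseteq S_\varepsilon$, a short computation bounding $\hau^{Q-1}(R\setminus S_\varepsilon)$ by $\hau^{Q-1}(R\setminus\bigcup_{h\le N}\hat R_h)+\sum_{h\le N}\hau^{Q-1}(\hat R_h\setminus K_h)<\varepsilon$ closes the argument.

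The main obstacle is precisely this gluing step: the union $\bigcup_h S_h$ need not be a $C^1_X$-hypersurface because of mutual crossings, so I must localize around disjoint compact pieces and exploit that the defining-function condition in Definition~\ref{def_ipersup} is purely local. A secondary technical point is ensuring finiteness of $\hau^{Q-1}(S_\varepsilon)$, for which I rely on the (standard) local finiteness of the area of intrinsic level sets; the passage to a Borel hull $\hat R$ is what lets me invoke inner regularity without assuming $R$ itself measurable.
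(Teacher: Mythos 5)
Your proof is correct, and it reaches the conclusion by a genuinely different route than the paper. Both arguments share the same skeleton: first reduce to finitely many covering $C^1_X$-hypersurfaces losing at most $\varepsilon/2$ of the measure of $R$, then carve out finitely many pieces lying at \emph{positive mutual distance}, so that their union is again a $C^1_X$-hypersurface because the defining condition in Definition~\ref{def_ipersup} is purely local. The difference is in how the carving is done. The paper works on the hypersurfaces themselves: it inductively shrinks each $S_i$ to $S_i'$, the set of points of $S_i\setminus\bigcup_{j<i}\overline{S_j'}$ at distance greater than $r_i$ from the relative boundary, selecting each radius $r_i$ by a continuity-of-measure argument together with the observation that only countably many of the uncountably many disjoint shells $R\cap\partial S_i'(r)$ can carry positive $\hau^{Q-1}$-measure. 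You instead work on the rectifiable set itself: after passing to a Borel hull and disjointifying, you invoke tightness of the finite Borel measures $\hau^{Q-1}\res\hat R_h$ on a Polish space to extract pairwise disjoint compacta $K_h\subseteq S_h$, and then localize each $S_h$ to a finite union of small balls around $K_h$; the pairwise disjointness of these neighbourhoods comes for free from the positive distance between disjoint compacta. Your route avoids the paper's induction and the somewhat delicate radius selection, at the modest price of invoking Ulam-type inner regularity and the (easily checked, but worth a line) fact that a $C^1_X$-hypersurface is locally closed, hence Borel, so that the sets $\hat R_h$ are Borel and tightness applies. Finally, note that both proofs rest on the same standard background fact, which you flag explicitly and the paper uses implicitly in its reduction ``it is not restrictive to assume $\hau^{Q-1}(S_i)<\infty$'': a $C^1_X$-hypersurface has locally finite $\hau^{Q-1}$-measure, via the representation of the $X$-perimeter of the domains it locally bounds, cf.~\cite{AmbrosioAhlfors,vittone2012}.
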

\begin{proof}
Since $R$ is $X$-rectifiable we can write
\[
R  \se S_0 \cup \bigcup_{i \in \N} S_i
\]
where $S_0$ is a $\hau^{Q-1}$-negligible set and, for every $i \in \N$, $S_i$ is a $C^1_X$-hypersurface. It is not restrictive to assume $\hau^{Q-1}(S_i)<\infty$ for every $i \in \N$. For every $\ve>0$ there exists a positive integer $M$ such that
    \[
    \hau^{Q-1}\left( R \setminus \bigcup_{i \leq M}S_i \right)<\frac{\ve}{2}.
    \]    
    We define the $C^1_X$-hypersurface $S_1' \ceq \lbrace p \in S_1: d(p,\pp S_1)>r_1 \rbrace$, 
    where $\partial S_1':=\overline{S_1'}\setminus S_1'$ and $r_1$ is chosen so that
    \[
    \hau^{Q-1}(R \cap \pp S_1')=0 \quad \text{ and } \quad \hau^{Q-1}((R \cap S_1) \setminus S_1') < \frac{\ve}{4}.
    \]
    Let us prove that such $r_1$ exists: for $r>0$ we define the set
    \[
    S_1'(r) \ceq \lbrace p \in S_1: d(p,\pp S_1)> r \rbrace.
    \]
    Since $\lbrace R \cap \pp S_1'(r): r>0 \rbrace$ is a family of uncountably many pairwise disjoint subsets of $R$ and $\hau^{Q-1}(R)<\infty$, then $\hau^{Q-1}(R \cap \pp S_1'(r))=0$ for arbitrarily small $r>0$. Moreover, since $R \cap S_1$ is the union of the nested sets $(R \cap S_1) \setminus S_1'(r)$, we have, by the continuity of measure, that
    \[
    \hau^{Q-1}((R \cap S_1)\setminus S_1'(r))\xrightarrow{r \to 0}0.
    \]
    Reasoning by induction, for every $i=2,\dots,M$ we can define the $C^1_X$-hypersurfaces
    \[
    S_i'=\left\lbrace p \in S_i \setminus \bigcup_{j<i}\overline{S'_j}:d\left(p,\pp \left(S_i \setminus \bigcup_{j<i}\overline{S_j'}\right)\right)>r_j \right\rbrace,
    \]
    where we used the fact that $S_i \setminus \bigcup_{j<i} \overline{S_j'}$ is a $C^1_X$-hypersurface and $r_i>0$ is chosen so that
    \[
    \hau^{Q-1}(R \cap \pp S_i')=0 \quad \text{ and } \quad \hau^{Q-1}\left( R \cap \left( S_i \setminus \bigcup_{j<i} \overline{S_j'}\right) \setminus S_i' \right)< \frac{\ve}{2^{i+2}}.
    \]
    Now consider $S_\ve \ceq \bigcup_{i=1}^M S_i'$, which is a $C^1_X$-hypersurface because it is union of finitely many $C^1_X$-hypersurface at positive distance from each other. Then
    \begin{align*}
        \hau^{Q-1}(R \setminus S_\ve) &\leq \hau^{Q-1}\left( R \setminus \bigcup_{i \leq M}S_i \right)+\hau^{Q-1}\left( R \cap \left( \bigcup_{i \leq M}S_i \right) \setminus \left( \bigcup_{j \leq M}S_j'\right) \right)\\
        &< \frac{\ve}{2}+\hau^{Q-1} \left( \bigcup_{i \leq M} \left( \left( R \cap S_i \right) \setminus \bigcup_{j \leq M}S_j' \right)\right)\\
        &\leq \frac{\ve}{2}+\hau^{Q-1}\left( \bigcup_{i \leq M}\left(R \cap \left( S_i \setminus \bigcup_{j \leq i}S_j' \right) \right) \right)\\
        &=\frac{\ve}{2}+\hau^{Q-1}\left( \bigcup_{i \leq M}\left( R \cap \left( S_i \setminus \bigcup_{j<i}\overline{S_j'}\right) \setminus S_i' \right) \right)\\
        &<\ve,
    \end{align*}
where we used that $\hau^{Q-1}(R \cap \pp S_j')=0$. 
Finally we observe  that
\[
\hau^{Q-1}(S_\ve) = \sum_{i \leq M}\hau^{Q-1}(S_i') \leq \sum_{i \leq M}\hau^{Q-1}(S_i)<\infty.
\]
This proves one implication, the other one is trivial.
\end{proof}

\begin{defi}\label{def_b+b-}
Fix $p \in (\R^n,X)$, $R>0$ and $\nu \in \mathbb{S}^{m-1}$. Let $f \in C^1_X(B(p,R))$ be such that $f(p)=0$ and $\frac{Xf(p)}{|Xf(p)|}=\nu$. For every $r \in (0,R)$ we set
\begin{align*}
&B^+_\nu(p,r) \ceq B(p,r) \cap \lbrace f>0\rbrace,\\
&B^-_\nu(p,r) \ceq B(p,r) \cap \lbrace f<0\rbrace.
\end{align*}
\end{defi}

\begin{defi}\label{def_approxjump}
Let $u \in L^1_{loc}(\Omega)$ and $p \in \Omega$. We say that $u$ has an \emph{approximate $X$-jump} at $p$ if there exist $u^+,u^- \in \R$ with $u^+ \neq u^-$ and $\nu \in \mathbb{S}^{m-1}$ such that
\begin{equation}\label{eq_xjump}
\lim_{r \to 0} \fint_{B^+_\nu(p,r)} |u-u^+|d\leb^n=\lim_{r \to 0} \fint_{B^-_\nu(p,r)} |u-u^-|d\leb^n=0.
\end{equation}
The \emph{jump set} $\J_u$ is defined as the set of points where $u$ has an approximate $X$-jump. Notice that condition \eqref{eq_xjump} does not depend on the choice of the function $f$ used to construct the sets  ${B^+_\nu(p,r)}$ and ${B^-_\nu(p,r)}$, see \cite[Proposition 2.26 and Remark 2.27]{dv}.

It is worth remarking that, by~\cite[Theorem 1.2 and Remark 2.25]{dv}, the Hausdorff measure $\hau^{Q-1}$ on the jump set is $\sigma$-finite.
\end{defi}

\begin{rem}\label{rem_proprietaR}
It was proved in~\cite[Theorem 1.5]{dv} that, if $(\R^n,X)$ satisfies the additional property $\mathcal{R}$ (see Definition \ref{def_pror}  below), then the jump set $\J_u$ is countably $X$-rectifiable. It is worth recalling that Heisenberg groups, Carnot groups of step $2$ and Carnot groups of type $\star$ all satisfy property $\mathcal{R}$, see~\cite[Theorem 4.3]{dv}.
\end{rem}

\begin{defi}\label{def_pror}
      We say that $(\R^n,X)$ satisfies the \emph{property $\mathcal{R}$} if for every open set $\Omega \se \R^n$ and every $E \se \R^n$ with locally finite $X$-perimeter in $\Omega$, the essential boundary $\partial^* E \cap \Omega$ is countably $X$-rectifiable. Let us recall for completeness that the \emph{essential boundary} of a measurable set $E$ is $\partial^*E \ceq \R^n \setminus (E_0 \cup E_1)$, where for $\lambda \in [0,1]$ we denote by $E_\lambda$ the set of points $p\in\R^n$ where $E$ has density $\lambda$, i.e., 
    \[
    \lim_{r \to 0}\frac{\leb^n(E \cap B(p,r))}{\leb^n(B(p,r))}=\lambda.
    \]
    \end{defi}

We are ready to introduce the jump part of the derivative $D_Xu$.

\begin{defi}
Let $u \in \bv_X(\Omega)$. We define the \emph{jump part} of $D_Xu$ as 
\[
D_X^ju \ceq D_X^s u \res \J_u
\]
and the \emph{Cantor part} of $D_Xu$ as
\[
D_X^cu \ceq D_X^s u \res (\Omega \setminus \J_u).
\]
\end{defi}

\begin{rem}
If $(\R^n,X)$ satisfies both properties $\mathcal R$ and $\mathcal{D}$ (see Definition \ref{def_prod}  below), then the jump part has the representation
 \[
 D^j_X u= \sigma(\cdot, \nu_{\J_u})(u^+-u^-)\nu_{\J_u}\mathcal{S}^{Q-1}\res \J_u.
 \]
 for a suitable function $\sigma\colon\R^n \times \mathbb{S}^{m-1} \to (0,+\infty)$; see~\cite[Theorem 1.7]{dv}. Again,  Heisenberg groups, Carnot groups of step $2$ and Carnot groups of type $\star$ all satisfy both properties, see~\cite[Theorem 4.3]{dv}.
\end{rem}

 \begin{defi}\label{def_prod}
       We say that $(\R^n,X)$ satisfies the \emph{property $\mathcal{D}$} if there exists a function $\sigma:\R^n \times \mathbb{S}^{m-1} \to (0,+\infty)$ such that, for every $C^1_X$-hypersurface $S \se \R^n$ and every $p \in S$, one has
     \[
     \lim_{r \to 0}\frac{\mathcal{S}^{Q-1}(S \cap B(p,r))}{r^{Q-1}}=\sigma(p,\nu_S(p)).
     \]
 \end{defi}

 We mention that the validity of property $\mathcal D$ is related to the broader problem of computing the Federer density for the perimeter measure of surfaces; see, e.g.\  \cite{Magnani15, FSSC15, Magnani17, LecMag21}. If property $\mathcal D$ holds, then the function $\sigma$ of Definition~\ref{def_prod}, is actually explicit in many cases, see e.g.\ \cite{DonMagnani, Magnani22}.

We conclude this section with a couple of technical results that will be useful in the sequel.

\begin{pro}\label{pro_aux1}
Let $u \in \bv_X(\Omega)$.
\begin{enumerate}
    \item[(i)] Let $\xi \in C^\infty_c(\Omega)$. Then $u\xi \in \bv_X(\Omega)$ and
    \[
D_{X_i}(u \xi)=\xi D_{X_i}u+u X_i\xi \leb^n.
\]
\item[(ii)] Let $K \in C^\infty_c(B_E(0,r))$ be spherically symmetric. Then $u*K \in C^\infty(\Omega)$ and for any $ y \in \Omega$ such that $d_E( y, \partial \Omega)>r$ one has
\[
X_i(u*K)(y)=(D_{X_i}u*K)(y)+R_i(u,K; y) 
\]
where 
\begin{equation}\label{eq:R}
R_i(u,K;  y) \ceq \int_\Omega u(x)  \big((\div X_i)(x)K(x- y)-\langle X_i( y)-X_i(x), \nabla K(x- y)\rangle\big)dx.
\end{equation}
\end{enumerate}
\end{pro}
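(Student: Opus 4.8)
The plan is to derive both identities directly from the defining integration-by-parts property in Definition~\ref{def_bvx}, combined with the elementary Leibniz identity
\[
X_i^*\psi = (\div X_i)\,\psi + X_i\psi, \qquad \psi\in C^1,
\]
which is immediate from $X_i^*\psi = \sum_t \partial_t(a_{i,t}\psi)$.

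For (i), I would fix an open set $A\subset\subset\Omega$ and a test function $\vp\in C^1_c(A)$. Since $\xi\in C^\infty_c(\Omega)$, the product $\xi\vp$ again lies in $C^1_c(A)$ and is therefore admissible in Definition~\ref{def_bvx}. The key step is the product rule $X_i^*(\xi\vp) = \xi\,X_i^*\vp + \vp\,X_i\xi$, which one checks from the displayed identity (the $(\div X_i)\xi\vp$ and $\xi X_i\vp$ terms reassemble into $\xi X_i^*\vp$, leaving the extra term $\vp X_i\xi$). Inserting $\xi\vp$ into the definition of $D_{X_i}u$ and rearranging yields
\[
-\int_A u\,\xi\, X_i^*\vp \, d\leb^n = \int_A \vp\, \xi\, d(D_{X_i}u) + \int_A \vp\, u\, X_i\xi\, d\leb^n,
\]
which is exactly the assertion $D_{X_i}(u\xi) = \xi D_{X_i}u + uX_i\xi\,\leb^n$. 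Membership $u\xi\in\bv_X(\Omega)$ then follows since $u\xi\in L^1(\Omega)$ (as $\xi$ is bounded with compact support) and the right-hand side is a finite measure, bounded in total variation by $\|\xi\|_\infty |D_Xu|(\Omega) + \|X_i\xi\|_\infty\|u\|_{L^1(\Omega)}$.

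For (ii), smoothness of $u*K$ on $\Omega$ is the standard fact that convolving an $L^1$ function with a kernel in $C^\infty_c$ gives a $C^\infty$ function, with derivatives computed by differentiating under the integral sign. I would fix $y$ with $d_E(y,\partial\Omega)>r$, so that the translate $x\mapsto K(x-y)$ is supported in $\overline{B_E(y,r)}\subset\Omega$ and is thus an admissible test function. Differentiating under the integral gives $X_i(u*K)(y)=\sum_t a_{i,t}(y)\int u(x)(\partial_t K)(y-x)\,dx$, where the spherical symmetry of $K$ is used only to pass freely between $K(y-x)$ and $K(x-y)$ and between $(\partial_t K)(y-x)$ and $-(\partial_t K)(x-y)$. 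Applying Definition~\ref{def_bvx} to the test function $K(\cdot-y)$ and expanding $X_i^*[K(\cdot-y)]$ with the Leibniz identity separates a $\div X_i$ term from a gradient term, reconstructing $(D_{X_i}u*K)(y)$. Subtracting, the gradient contributions weighted by $a_{i,t}(x)$ combine with those weighted by $a_{i,t}(y)$ into $\langle X_i(x)-X_i(y),\nabla K(x-y)\rangle$, while the $\div X_i$ term survives, producing precisely $R_i(u,K;y)$.

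The computations are routine, so the only delicate points are the bookkeeping of signs (the chain-rule sign in $\partial_{y_t}K(y-x)$ versus $\partial_{x_t}K(y-x)$, together with the parity of $\nabla K$) and the admissibility of the translated kernel as a compactly supported test function, which is exactly what the hypothesis $d_E(y,\partial\Omega)>r$ secures. I expect the sign-tracking in (ii) to be the main nuisance; everything else is a direct unwinding of the definition.
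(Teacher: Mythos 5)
Your part (i) is essentially identical to the paper's proof: you test against the product $\xi\vp$, expand $X_i^*(\xi\vp)=\xi\,X_i^*\vp+\vp\,X_i\xi$, and rearrange, with the total-variation bound giving membership in $\bv_X(\Omega)$; nothing to add there. The genuine difference is in part (ii): the paper does not prove it at all, deferring to \cite[Lemma 2.6]{garofalonhieu} and \cite{friedrichs}, whereas you give a direct, self-contained argument — and your argument is correct. Concretely, writing $(u*K)(y)=\int_\Omega u(x)K(x-y)\,dx$ (evenness of $K$) and differentiating under the integral sign, the oddness of $\nabla K$ gives
\[
X_i(u*K)(y)=-\int_\Omega u(x)\,\langle X_i(y),\nabla K(x-y)\rangle\,dx,
\]
while $K(\cdot-y)$ is an admissible test function supported in $B_E(y,r)\subset\subset\Omega$ (this is exactly what $d_E(y,\partial\Omega)>r$ secures), so Definition~\ref{def_bvx} together with the expansion $X_i^*[K(\cdot-y)]=(\div X_i)\,K(\cdot-y)+\langle X_i,\nabla K(\cdot-y)\rangle$ yields
\[
(D_{X_i}u*K)(y)=-\int_\Omega u(x)\big((\div X_i)(x)K(x-y)+\langle X_i(x),\nabla K(x-y)\rangle\big)\,dx;
\]
subtracting the two identities produces exactly $R_i(u,K;y)$ as in \eqref{eq:R}, with all signs consistent. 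The only points requiring care are the ones you flag yourself: the parity of $K$ and $\nabla K$, and the admissibility of the translated kernel. What your route buys is a proof entirely inside the paper's own framework and notation; what the paper's citation buys is brevity, outsourcing this Friedrichs-type commutator identity to the literature where it is established in greater generality.
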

\begin{proof}
Let us first prove $(i)$. Let $\vp \in C^1_c(\Omega)$. We have 
\begin{align*}
-\int_\Omega \vp \xi d(D_{X_i}u)&=\int_\Omega uX_i^*(\vp \xi)d\leb^n=\sum_{t=1}^n\int_\Omega u \frac{\pp (a_{i,t}\vp \xi)}{\pp x_t}d\leb^n=\\
&=\sum_{t=1}^n\int_\Omega u \xi \frac{\pp (a_{i,t} \vp)}{\pp x_j}d\leb^n+\sum_{t=1}^n\int_\Omega u \vp a_{i,t} \frac{\pp \xi}{\pp x_t}d\leb^n=\\
&=\int_\Omega u\xi X_i^* \vp d\leb^n+\int_\Omega u\vp X_i \xi d\leb^n.
\end{align*}
Rearranging the equation we get
\begin{align*}
\int_\Omega u\xi X_i^* \vp d\leb^n=-\int_\Omega \vp d(uX_i \xi \leb^n)-\int_\Omega \vp  d(\xi D_{X_i}u)
\end{align*}
which implies
\[
D_{X_i}(u\xi)=uX_i \xi\leb^n+\xi D_{X_i}u.
\]
For a proof of $(ii)$ see \cite[Lemma 2.6]{garofalonhieu} and \cite{friedrichs}. 
\end{proof}

\begin{lem}[{\cite[Lemma 1.2.1 (i)]{fssc}}]\label{lem_app2}
Let $W_\ve\colon \R^n \to \R$ be, for $\ve>0$, a family of measurable functions supported in $B_E(0,\ve)$, satisfying $|W_\ve(x)|\leq C \ve^{-n}$ for some positive constant $C$ and $\int_{B_E(0,\ve)}W_\ve(x)dx=0$. Then for every $u \in L^1(\Omega)$ we have
\[
\lim_{\ve \to 0}\nl W_\ve * u \nr_{L^1(\Omega)}=0.
\]
\end{lem}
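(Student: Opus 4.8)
The plan is to exploit the zero-mean hypothesis $\int W_\ve\,d\leb^n=0$ in order to rewrite the convolution as an average of translation differences of $u$, thereby reducing the whole statement to the (standard) continuity of translations in $L^1$. First I would extend $u$ by zero outside $\Omega$, so that $u\in L^1(\R^n)$; since $\nl\,\cdot\,\nr_{L^1(\Omega)}\le\nl\,\cdot\,\nr_{L^1(\R^n)}$, it is enough to estimate the full $L^1(\R^n)$-norm of $W_\ve*u$, where $(W_\ve*u)(y)=\int_{\R^n}W_\ve(x)\,u(y-x)\,dx$. Note also that, by the support and pointwise bound on $W_\ve$, the functions $W_\ve$ satisfy $\nl W_\ve\nr_{L^1(\R^n)}\le C\ve^{-n}\leb^n(B_E(0,\ve))=C\omega_n$ uniformly in $\ve$, where $\omega_n=\leb^n(B_E(0,1))$; in particular each convolution operator is well defined on $L^1$ by Young's inequality.

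The key algebraic step is the cancellation: for a.e.\ $y\in\R^n$, since $u(y)\int_{\R^n}W_\ve\,d\leb^n=0$, I would write
\[
(W_\ve*u)(y)=\int_{B_E(0,\ve)}W_\ve(x)\,\big(u(y-x)-u(y)\big)\,dx.
\]
Applying the Minkowski integral inequality (equivalently, Tonelli's theorem to exchange the order of integration) together with $|W_\ve|\le C\ve^{-n}$ then yields
\[
\nl W_\ve*u\nr_{L^1(\R^n)}\le C\ve^{-n}\int_{B_E(0,\ve)}\nl u(\cdot-x)-u\nr_{L^1(\R^n)}\,dx\le C\omega_n\,\sup_{|x|\le\ve}\nl u(\cdot-x)-u\nr_{L^1(\R^n)},
\]
where in the last inequality I again used $\ve^{-n}\leb^n(B_E(0,\ve))=\omega_n$ to absorb the singular prefactor into a fixed constant.

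To conclude I would invoke the continuity of translations in $L^1(\R^n)$, namely $\lim_{x\to0}\nl u(\cdot-x)-u\nr_{L^1(\R^n)}=0$, which forces $\sup_{|x|\le\ve}\nl u(\cdot-x)-u\nr_{L^1(\R^n)}\to0$ as $\ve\to0$, giving the claim. The substantive input is precisely this translation-continuity fact, which itself rests on the density of $C_c(\R^n)$ in $L^1(\R^n)$; if one prefers a self-contained route, one can instead reduce to $g\in C_c(\R^n)$ by density (estimating the error term by $\nl W_\ve\nr_{L^1}\nl u-g\nr_{L^1}\le C\omega_n\nl u-g\nr_{L^1}$) and then use the uniform continuity of $g$ together with the fact that $W_\ve*g$ is supported in a fixed compact set for $\ve\le1$. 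In either formulation the only points requiring care are the legitimacy of the Fubini/Tonelli exchange and the cancellation identity above; the rest is bookkeeping, so I do not expect a genuine obstacle beyond correctly isolating the zero-mean cancellation as the mechanism that converts the estimate into a statement about the $L^1$-modulus of continuity of $u$.
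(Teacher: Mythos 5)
Your proof is correct. The paper itself gives no proof of this lemma---it only cites \cite[Lemma 1.2.1 (i)]{fssc}---and your argument (extend $u$ by zero, use the zero-mean hypothesis to rewrite $W_\ve * u$ as an average of translation differences $u(\cdot-x)-u$ weighted by $W_\ve$, bound via $|W_\ve|\le C\ve^{-n}$ and $\leb^n(B_E(0,\ve))=\omega_n\ve^n$, and conclude by continuity of translations in $L^1(\R^n)$) is exactly the standard proof of that cited result, so it correctly and completely fills the gap.
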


\section{Special functions of Bounded {\it X}-variation}\label{sec_sbvx}
We start by introducing the main object of this paper. From now on, $\Omega\subset\R^n$ is a fixed open set.

\begin{defi}\label{def_sbvx}
Let $u \in \bv_X(\Omega)$. We say that $u$ is a \emph{special function of bounded $X$-variation}, and we write $u \in \sbv_X(\Omega)$, if
\begin{enumerate}
    \item[(i)] $D_X^cu=0$,
    \item[(ii)] $\J_u$ is a countably $X$-rectifiable set.
\end{enumerate}
If $u$ is  in  $\bv_{X,\loc}(\Omega)$ only, we say that $u$ is a \emph{special function of locally bounded $X$-variation}, and we write $u \in \sbv_{X,\loc}(\Omega)$.
\end{defi}

As explained in Remark~\ref{rem_proprietaR}, when $(\R^n,X)$ satisfies property $\mathcal R$ then condition $(ii)$ in Definition~\ref{def_sbvx} is always automatically satisfied by any $u\in \bv_{X,\loc}(\Omega)$.

In the current section the results \cite[Theorems 1.3, 1.4, 1.5, 1.6]{vittone2012} will be crucial: we make the following key observation that will allow us to use the aforementioned theorems.
\begin{rem}
The metric balls in CC spaces, in general, are not $X$-Lipschitz domains (see \cite[Definition 1.1]{vittone2012}) since their boundaries contain characteristic points. However, a $C^1_X$-hypersurface $S$ locally separates an open set into two $X$-regular domains (see \cite[Section 2.4]{vittone2012}) in the following sense: for each $p \in S$, there exists $r>0$ and $f \in C^1_X(B(p,r))$ such that $S \cap B(p,r)= \lbrace f=0 \rbrace$ with $Xf \neq 0$ on $B(p,r)$, thus $B^+\ceq B(p,r) \cap \lbrace f>0 \rbrace$ and $B^- \ceq B(p,r) \cap \lbrace f<0 \rbrace$ satisfy all the properties of $X$-regular domains if we consider the boundary in the relative topology with respect to $B(p,r)$.
\end{rem}

We now provide an equivalent definition for special functions of bounded $X$-variation that will be useful in the sequel.

\begin{pro}\label{prop_defequivsbvx}
    The following statements are equivalent:
    \begin{enumerate}
        \item[(i)] $u\in \sbv_{X,\loc}(\Omega)$;
        \item[(ii)] $u\in \bv_{X,\loc}(\Omega)$ and there exist a countably $X$-rectifiable set $R\subset\Omega$ and a function\footnote{Observe that necessarily $f\in L^1_{\loc}(R,\hau^{Q-1})$, for otherwise $u \not\in \bv_{X,\loc}(\Omega)$.}  $f\colon R\to\R$ such that
        \[
        D_X^su = f\:\nu_R\:\hau^{Q-1}\res R,
        \]
        where $\nu_R $ denotes the horizontal normal to $R$.
    \end{enumerate}
    Moreover, the jump set $\mathcal J_u$ coincides with $R_0:=\{p\in R:f(p)\neq 0\}$ up to $\hau^{Q-1}$-negligible sets.
\end{pro}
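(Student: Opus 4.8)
The plan is to prove the two equivalences \emph{simultaneously with} the final \emph{moreover} assertion, since the identity $\J_u = R_0$ (up to $\hau^{Q-1}$-negligible sets) is really the linchpin of the whole statement: once it is available, both the countable $X$-rectifiability of $\J_u$ and the vanishing of the Cantor part follow from soft measure-theoretic manipulations, using only that $D_X^su$ is concentrated on $R$, that $D_X^su\ll\hau^{Q-1}\res R$, and that $R$ (being $X$-rectifiable, hence $\hau^{Q-1}$-$\sigma$-finite and thus $\hau^Q$-null) is $\leb^n$-negligible.

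For the implication $(ii)\Rightarrow(i)$ I would first record that, since $\leb^n(R)=0$, one has $|D_Xu|\res B=\int_B|f|\,d(\hau^{Q-1}\res R)$ for every Borel set $B$. To get $\J_u\subseteq R_0$ up to $\hau^{Q-1}$-null sets I would set $N:=\J_u\setminus R_0$ and restrict the lower bound of Theorem~\ref{teo_prop3.76} to $N$: this gives $\lambda|u^+-u^-|\,\hau^{Q-1}\res N\le|D_X^su|\res N=|f|\,\hau^{Q-1}\res(R\cap N)=0$, because $R\cap N\se\{f=0\}$; as $\lambda>0$ and $|u^+-u^-|>0$ on $\J_u$, this forces $\hau^{Q-1}(N)=0$. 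For the reverse inclusion I would show $\hau^{Q-1}(R_0\setminus\mathcal S_u)=0$ via implication~\eqref{2implication}: any Borel $B\se R_0\setminus\mathcal S_u$ of finite $\hau^{Q-1}$-measure satisfies $B\cap\mathcal S_u=\emptyset$, hence $|D_Xu|(B)=0$, whereas $|D_Xu|(B)=\int_B|f|\,d\hau^{Q-1}>0$ whenever $\hau^{Q-1}(B)>0$; covering $R_0$ by countably many such $B$ (using the $\sigma$-finiteness of $\hau^{Q-1}\res R$) yields $R_0\se\mathcal S_u$ up to null sets, which I would upgrade to $R_0\se\J_u$ by invoking the Federer--Vol'pert-type identity $\hau^{Q-1}(\mathcal S_u\setminus\J_u)=0$ from~\cite{dv}. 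With $\J_u=R_0$ established, $\J_u$ is contained, up to $\hau^{Q-1}$-null sets, in the $X$-rectifiable set $R$ and is therefore $X$-rectifiable; moreover $D_X^cu=D_X^su\res(\Omega\setminus\J_u)=f\,\nu_R\,\hau^{Q-1}\res(R\setminus\J_u)=0$, since on $R\setminus\J_u=(R\setminus R_0)\cup(R_0\setminus\J_u)$ one has either $f=0$ or $\hau^{Q-1}$-negligibility.

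For $(i)\Rightarrow(ii)$ I would take $R:=\J_u$, which is $X$-rectifiable by hypothesis, and use $D_X^cu=0$ to write $D_X^su=D_X^ju=D_X^su\res\J_u$. Since $\hau^{Q-1}\res\J_u$ is $\sigma$-finite and, by~\eqref{1implication}, $|D_Xu|\ll\hau^{Q-1}$, the Radon--Nikodym theorem furnishes a density $g\in L^1_{\loc}(\J_u,\hau^{Q-1};\R^m)$ with $D_X^su=g\,\hau^{Q-1}\res\J_u$. It then remains to prove that $g$ is parallel to $\nu_{\J_u}$ at $\hau^{Q-1}$-a.e.\ point, so that $g=f\,\nu_{\J_u}$ with $f:=\langle g,\nu_{\J_u}\rangle$; I regard this \emph{direction} statement as the main obstacle. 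I would establish it by localizing on each $C^1_X$-hypersurface $S_h$ of a countable family covering $\J_u$: near $p\in S_h$ the surface separates a ball $B(p,r)$ into the two $X$-regular domains $B^+,B^-$, and the trace and Gauss--Green results of~\cite[Theorems~1.3, 1.4, 1.5, 1.6]{vittone2012} identify the portion of $D_Xu$ carried by $S_h$ with the jump of the one-sided traces of $u$ times $\nu_{S_h}$, tested against the $X$-perimeter measure of $B^+$; as the latter is a nonnegative multiple of $\hau^{Q-1}\res S_h$ with polar $\nu_{S_h}$, the density $g$ must point along $\nu_{S_h}=\nu_{\J_u}$ at $\hau^{Q-1}$-a.e.\ point of $S_h$, and patching over $\{S_h\}$ gives $g=f\,\nu_{\J_u}$ on all of $\J_u$. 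Consistency with the \emph{moreover} part is then automatic, since the lower bound of Theorem~\ref{teo_prop3.76} gives $|f|=|g|>0$ $\hau^{Q-1}$-a.e.\ on $\J_u$, so $R_0=\{f\neq0\}$ coincides with $\J_u=R$ up to $\hau^{Q-1}$-null sets.
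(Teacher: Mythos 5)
Your overall architecture is sound, and your $(i)\Rightarrow(ii)$ direction is essentially the paper's own argument: cover $\J_u$ by pairwise disjoint $C^1_X$-hypersurfaces and use the trace/Gauss--Green theorems of \cite{vittone2012} together with \cite[Theorem 4.2]{AmbrosioAhlfors} to identify $D_X^su\res S_h$ with $(u^+-u^-)\lambda\,\nu_{S_h}\,\hau^{Q-1}\res S_h$; your Radon--Nikodym preamble is a harmless repackaging of this. Likewise, your proof that $\hau^{Q-1}(\J_u\setminus R_0)=0$ via the lower bound of Theorem~\ref{teo_prop3.76} is correct and matches (in direct rather than contradiction form) the paper's argument.

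The genuine gap is in the step upgrading $R_0\se\mathcal{S}_u$ to $R_0\se\J_u$: you invoke ``the Federer--Vol'pert-type identity $\hau^{Q-1}(\mathcal{S}_u\setminus\J_u)=0$ from \cite{dv}'', but no such result is available in the generality of this paper. In \cite{dv} that identity is obtained only under property $\mathcal{R}$ (see \cite[Theorem 1.5]{dv}); what holds for a general equiregular CC space is merely the $\sigma$-finiteness of $\hau^{Q-1}$ on $\mathcal{S}_u$ (\cite[Theorem 1.2]{dv}). Indeed, the unavailability of such Federer--Vol'pert/rectifiability statements is precisely why Definition~\ref{def_sbvx} must impose the rectifiability of $\J_u$ as a separate hypothesis, and why the paper's proof of $(ii)\Rightarrow(i)$ does not go through $\mathcal{S}_u$ at all. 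The repair is to exploit the rectifiability of $R$ (which you are given), exactly as you do in the opposite direction: cover $R=R_0$ (not restrictive) by pairwise disjoint $C^1_X$-hypersurfaces $S_j$; by \cite[Theorems 1.4 and 1.6]{vittone2012} and \cite[Theorem 4.2]{AmbrosioAhlfors} one has $D_X^su\res S_j=(u^+-u^-)\lambda\,\nu_{S_j}\,\hau^{Q-1}\res S_j$, where $u^\pm$ are one-sided traces attained as $\hau^{Q-1}$-a.e.\ one-sided approximate limits of $u$ on $S_j$; comparing with the assumed representation $f\,\nu_R\,\hau^{Q-1}\res(R\cap S_j)$ forces $(u^+-u^-)\lambda=f\neq 0$ $\hau^{Q-1}$-a.e.\ on $R_0\cap S_j$, and a point of $S_j$ where the traces differ is, by the very convergence property of traces, an approximate $X$-jump point. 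With this substitution your detour through $\mathcal{S}_u$ and implication~\eqref{2implication} becomes unnecessary, and the rest of your proof (rectifiability of $\J_u$, vanishing of $D_X^cu$, and the ``moreover'' identification) goes through as written.
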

\begin{proof}
    Assume $(i)$; then, the jump set $\mathcal J_u$ can be covered, up to a $\hau^{Q-1}$-negligible set, by countably many $C^1_X$-hypersurfaces $(S_j)_{j \in \N}$, that we may assume to be pairwise disjoint. Since $|D_Xu|(S_j\setminus \J_u)=0$ for every $j$, we obtain
    \begin{align*}
        D_X^su &= D_X^ju = D_Xu\res \mathcal J_u =\sum_{j\in\N} D_Xu\res (\J_u\cap S_j).
    \end{align*}
Observe that, locally, each hypersurface $S_j$ separates the space $\R^n$ into two  $X$-regular open sets (see~\cite{vittone2012}): denoting by $P_j^X$ the measure on $S_j$ defined locally as the {\em $X$-perimeter} measure of (each of) these two components, using~\cite[Theorems 1.4 and 1.6]{vittone2012} (see also~\cite[Proposition 3.7]{dv}) one finds
\[
D^s_X u= \sum_{j\in\N} (u^+-u^-) \nu_{S_j}\:P_j^X\res\J_u,
\]
where $u^\pm$ are the {\em traces} (see~\cite{vittone2012}) of $u$ on $S_j$. By~\cite[Theorem 4.2]{AmbrosioAhlfors} we further obtain
\[
D^s_X u= \sum_{j\in\N} (u^+-u^-) \lambda\:\nu_{S_j}\:\hau^{Q-1}\res(\J_u\cap S_j)
\]
    for a suitable $\lambda\colon\bigcup_jS_j\to(0,+\infty)$. Up to changing the sign of 
 $\nu_{\J_u}$, we can write $\nu_{\J_u}=\nu_{S_j}$ $\hau^{Q-1}$-a.e.\ on $\J_u\cap S_j$, hence concluding that
    \[
    D^s_X u= \sum_{j\in\N} (u^+-u^-) \lambda\:\nu_{\J_u}\:\hau^{Q-1}\res(\J_u\cap S_j) ,
    \]
which proves $(ii)$ with $R:=\J_u$ and $f:=(u^+-u^-)\lambda$.

Concerning the opposite implication, assume $(ii)$; clearly, it is not restrictive to assume that $R=R_0$. Cover $R$, up to a $\hau^{Q-1}$-negligible set, by countably many $C^1_X$-hypersurfaces $(S_j)_{j \in \N}$, that we may assume to be pairwise disjoint. Using again~\cite[Theorems 1.4 and 1.6]{vittone2012} (see also~\cite[Proposition 3.7]{dv}) and~\cite[Theorem 4.2]{AmbrosioAhlfors}, for every fixed $j$ we have
\[
D_X^su\res S_j = (u^+-u^-)\lambda\: \nu_{S_j} \hau^{Q-1}\res S_j,
\]
where again $u^\pm$ are the traces of $u$ on $S_j$ and $\lambda\colon S_j\to(0,+\infty)$. On the other hand, by assumption, we have
\[
D_X^su\res S_j = f\:\nu_R\, \hau^{Q-1}\res(R\cap S_j),
\]
which implies that (up to a change of sign for $\nu_{R}$ and $f$)
\begin{align*}
    &u^+-u^-=0\text{ $\hau^{Q-1}$-a.e. on }S_j\setminus R,\\
    &(u^+-u^-)\lambda = f\neq 0 \text{ and }\nu_{S_j}=\nu_R\text{ $\hau^{Q-1}$-a.e. on }S_j\setminus R.
\end{align*}
    Therefore, $\hau^{Q-1}$-a.e.\ point of $R\cap S_j$ is an approximate $X$-jump point and, in particular, $\hau^{Q-1}(R\setminus \J_u)=0$. The proof will be accomplished if we show that $\hau^{Q-1}(\J_u\setminus R)=0$; if, instead, $\hau^{Q-1}(\J_u\setminus R)>0$, then by~\cite[Theorem 1.3 (i)]{dv} we would obtain
    \[
    |D^s_X u|(\J_u\setminus R) = |D_X u|(\J_u\setminus R) >0,
    \]
    which clearly contradicts assumption $(ii)$. This concludes the proof.
\end{proof}

We observe in passing that Proposition \ref{prop_defequivsbvx}, together with the fact that the horizontal normals of two $X$-rectifiable sets $R_1,R_2$ coincide (up to a sign) $\hau^{Q-1}$-a.e. on $R_1 \cap R_2$ (see \cite[Proposition 2.18]{dv}), implies that the space $\sbv_X$ is closed with respect to the usual sum.

The following theorem provides our first main result about $\sbv_X$ functions.

\begin{teo}\label{teo_sbvxchiuso}
The subspace $\sbv_X(\Omega)$ is  closed in $\bv_X(\Omega)$.
    \end{teo}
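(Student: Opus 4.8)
The plan is to take a sequence $(u_n)_{n\in\N}\subset\sbv_X(\Omega)$ converging to some $u$ in the $\bv_X$-norm and to show that $u\in\sbv_X(\Omega)$. Since $\bv_X(\Omega)$ is a Banach space, the limit $u$ already belongs to $\bv_X(\Omega)$, so the whole point is to recover conditions (i) and (ii) of Definition~\ref{def_sbvx}; by Proposition~\ref{prop_defequivsbvx} this is equivalent to showing that $D_X^su$ has the form $f\,\nu_R\,\hau^{Q-1}\res R$ for some countably $X$-rectifiable set $R$. The driving observation is that norm convergence is much stronger than weak-$*$ convergence: from $\nl u_n-u\nr_{\bv_X(\Omega)}\to 0$ we get $|D_Xu_n-D_Xu|(\Omega)\to0$, i.e.\ convergence of the derivatives in total variation.

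First I would split off the singular parts. Writing the Lebesgue decompositions $D_Xu_n=D_X^au_n+D_X^su_n$ and $D_Xu=D_X^au+D_X^su$, uniqueness of the Lebesgue decomposition applied to $D_X(u_n-u)=D_Xu_n-D_Xu$ gives $D_X^su_n-D_X^su=(D_Xu_n-D_Xu)^s$, whence
\[
|D_X^su_n-D_X^su|(\Omega)\le |D_Xu_n-D_Xu|(\Omega)\xrightarrow{n\to\infty}0 ,
\]
so the singular parts converge in total variation as well.

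Next I would locate the limit singular part on a rectifiable set. Since each $u_n\in\sbv_X(\Omega)$, Proposition~\ref{prop_defequivsbvx} provides a countably $X$-rectifiable set $R_n$ on which $D_X^su_n$ is concentrated. Set $R\ceq\bigcup_{n}R_n$, which is again countably $X$-rectifiable (a countable union of $C^1_X$-hypersurface coverings is such a covering). As $|D_X^su_n|(\Omega\setminus R)=0$ for every $n$, total-variation convergence yields
\[
|D_X^su|(\Omega\setminus R)\le |D_X^su-D_X^su_n|(\Omega\setminus R)+|D_X^su_n|(\Omega\setminus R)\le |D_X^su-D_X^su_n|(\Omega)\to 0,
\]
so $D_X^su$ is concentrated on the countably $X$-rectifiable set $R$.

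Finally, to upgrade ``concentrated on $R$'' to the precise structural form of Proposition~\ref{prop_defequivsbvx}(ii) I would cover $R$, up to an $\hau^{Q-1}$-negligible set, by pairwise disjoint $C^1_X$-hypersurfaces $(S_j)_j$ and run, verbatim, the trace computation in the proof of that proposition: the trace theorems of~\cite{vittone2012} together with~\cite[Theorem~4.2]{AmbrosioAhlfors} give $D_X^su\res S_j=(u^+-u^-)\lambda\,\nu_{S_j}\,\hau^{Q-1}\res S_j$, and summing over $j$ (legitimate because $D_X^su$ sits on $\bigcup_jS_j$) produces $D_X^su=f\,\nu_R\,\hau^{Q-1}\res R$. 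By Proposition~\ref{prop_defequivsbvx} this means $u\in\sbv_{X,\loc}(\Omega)$, and since $u\in\bv_X(\Omega)$ we conclude $u\in\sbv_X(\Omega)$. The only genuinely delicate point is this last step: total-variation convergence alone gives $D_X^su\ll\hau^{Q-1}\res R$ (via~\eqref{1implication}) but not the normal direction $\nu_R$ of the density, and it is precisely the trace machinery behind Proposition~\ref{prop_defequivsbvx}, rather than any soft measure-theoretic fact, that forces the density to be parallel to $\nu_R$.
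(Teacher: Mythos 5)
Your argument is correct and is essentially the paper's own proof: the paper likewise exploits that $\bv_X$-norm convergence forces total-variation convergence of the Lebesgue decompositions (phrased there for series $\sum_i u_i$ rather than for sequences), collects the singular parts on the countably $X$-rectifiable union of the sets $R_n$, and concludes via Proposition~\ref{prop_defequivsbvx}. The only real difference is your last step: where you re-run the trace machinery of~\cite{vittone2012} on the limit $u$, the paper simply sums the representations $f_n\,\nu_{R_n}\,\hau^{Q-1}\res R_n$, using that horizontal normals of $X$-rectifiable sets agree up to sign $\hau^{Q-1}$-a.e.\ on overlaps (\cite[Proposition 2.18]{dv}); note that this same fact, combined with the $L^1$-convergence of the densities, would also recover the direction $\nu_R$ ``softly'', so your closing claim that the trace machinery is indispensable at that point is a slight overstatement.
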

\begin{proof}
    If $I$ is finite or countable, $u_i \in \sbv_X(\Omega)$ for any $i \in I$ and $\sum_{i \in I}u_i$ converges to $u \in \bv_X(\Omega)$ in the $\bv_X$ norm, then $D_Xu=\sum_{i \in I}D_Xu_i$. Since $\sum_i D^a_{X}u_i$ is absolutely continuous with respect to $\leb^n$ and $\sum_i D^s_X u_i$ is singular, we have 
    \[
    D^a_Xu=\sum_{i \in I}D^a_Xu_i, \qquad D^s_Xu=\sum_{i \in I}D_X^su_i.
    \]
    Proposition~\ref{prop_defequivsbvx} implies that $u\in\sbv_X(\Omega)$, hence $\sbv_X(\Omega)$ is closed in $\bv_X(\Omega)$.
\end{proof}

In the following lemma we denote by $Du=(D_1u,\dots,D_nu)$ the derivatives of $u$ in the sense of distribution; moreover, when $\mu=(\mu_1,\dots,\mu_n)$ is a vector-valued Radon measure and $X(x)=(a_1(x),\dots,a_n(x))$ is a smooth vector field, we denote by $\langle \mu,X\rangle$ the (scalar) Radon measure $\sum_{t=1}^n a_t\mu_t$.

\begin{lem}\label{lem_bvbvx}
The following statements hold:
\begin{enumerate}
   \item[(i)] $\bv_{\loc}(\Omega) \se \bv_{X,\loc}(\Omega)$ and $D_{X_i}u=\langle Du,X_i\rangle$ for every $i=1,\dots,m$.
   \item[(ii)]  $\sbv_{\loc}(\Omega) \se \sbv_{X,\loc}(\Omega)$.
\end{enumerate}
\end{lem}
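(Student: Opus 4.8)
The plan is to prove the two inclusions in order, using the adjoint-integration-by-parts characterization of $\bv_X$ to reduce the statement about $X$-derivatives to the classical theory of (Euclidean) $\bv$ and $\sbv$ functions. For part $(i)$, I would start from $u\in\bv_{\loc}(\Omega)$ and show directly that the Euclidean distributional derivatives assemble into the required $X$-derivative. Fix $1\leq i\leq m$ and $\vp\in C^1_c(\Omega)$; recalling that $X_i^*\vp=\sum_{t=1}^n \partial_t(a_{i,t}\vp)$, I would write the product rule as
\[
\int_\Omega u\,X_i^*\vp\,d\leb^n=\sum_{t=1}^n\int_\Omega u\,\partial_t(a_{i,t}\vp)\,d\leb^n.
\]
Since $a_{i,t}\vp\in C^1_c(\Omega)$ and $u\in\bv_{\loc}$, the definition of the distributional derivative $D_t u$ gives $\int_\Omega u\,\partial_t(a_{i,t}\vp)\,d\leb^n=-\int_\Omega a_{i,t}\vp\,d(D_tu)$, so that $\int_\Omega u\,X_i^*\vp\,d\leb^n=-\int_\Omega \vp\,d\big(\sum_{t=1}^n a_{i,t}D_tu\big)=-\int_\Omega\vp\,d\langle Du,X_i\rangle$. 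By Definition~\ref{def_bvx} this is exactly the statement that $D_{X_i}u=\langle Du,X_i\rangle$ as measures, and since $Du$ is a locally finite Radon measure and the $a_{i,t}$ are smooth (hence locally bounded), $\langle Du,X_i\rangle$ is again a locally finite Radon measure; thus $u\in\bv_{X,\loc}(\Omega)$. I would also note the compact decomposition $D_Xu=A\,Du$ where $A=(a_{i,t})$ is the $m\times n$ coefficient matrix, which makes the singular/absolutely-continuous decompositions transform compatibly.

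For part $(ii)$, let $u\in\sbv_{\loc}(\Omega)$, so $D^cu=0$ and the Euclidean jump set $\J^E_u$ is countably (Euclidean-)rectifiable, with $D^su=(u^+-u^-)\nu^E_u\,\hau^{n-1}_E\res\J^E_u$. The strategy is to verify conditions $(i)$ and $(ii)$ of Definition~\ref{def_sbvx} for the $X$-structure using the relation $D_Xu=\langle Du,X_\cdot\rangle$ from part $(i)$. Applying this relation componentwise, the singular part of $D_Xu$ is the pushforward of $D^su$ under the matrix $A$, namely $D_X^su=A\,D^su$, which is carried by the same set $\J^E_u$; since this set is purely $(n-1)$-rectifiable and contained in $\mathcal S_u$, the Euclidean Cantor part being zero forces the $X$-Cantor part to vanish as well. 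The remaining point is rectifiability of the $X$-jump set in the \emph{intrinsic} sense, i.e.\ covering $\J_u$ up to $\hau^{Q-1}$-null sets by $C^1_X$-hypersurfaces.

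The main obstacle, as anticipated, is condition $(ii)$: a Euclidean $C^1$-hypersurface (of Euclidean codimension one) need not be a $C^1_X$-hypersurface in the sense of Definition~\ref{def_ipersup}, because at \emph{characteristic points} the horizontal gradient of a defining function degenerates. I would handle this by recalling that the set of characteristic points on a $C^1$-hypersurface is $\hau^{Q-1}$-negligible (a standard fact in CC geometry), so that away from characteristic points a Euclidean $C^1$-piece is locally a $C^1_X$-hypersurface; splitting the countably-rectifiable $\J^E_u$ into countably many such pieces and discarding the characteristic locus then exhibits a countable family of $C^1_X$-hypersurfaces covering $\J_u$ up to an $\hau^{Q-1}$-null set. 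A technical care-point is the relation between $\hau^{n-1}_E$ and $\hau^{Q-1}$ on these hypersurfaces; I would only need that $\hau^{Q-1}$-null-ness of the characteristic set holds and that the cover is genuine, which follows from the absolute-continuity implication \eqref{1implication} of Theorem~\ref{teo_prop3.76}. Combining the vanishing $X$-Cantor part with this $X$-rectifiability of $\J_u$ yields $u\in\sbv_{X,\loc}(\Omega)$, completing the proof.
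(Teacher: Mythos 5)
Part (i) of your proposal is correct and is exactly the paper's argument: integrate by parts against $a_{i,t}\vp$ and identify $D_{X_i}u=\langle Du,X_i\rangle$. The gap is in part (ii), at the sentence claiming that ``the Euclidean Cantor part being zero forces the $X$-Cantor part to vanish as well.'' Recall that $D_X^cu:=D_X^su\res(\Omega\setminus\J_u)$, where $\J_u$ is the \emph{intrinsic} jump set of Definition~\ref{def_approxjump}, defined through CC half-balls. Your argument shows (correctly) that $D_X^su=A\,D^su$ is carried by $\J_u^E$, but this does not control the Cantor part: the Cantor part is precisely the singular mass sitting on $\mathcal S_u\setminus\J_u$, and knowing that the carrier lies in $\mathcal S_u$ (an inclusion which is itself not obvious, since $\mathcal S_u$ is defined via CC balls, not Euclidean ones) says nothing about whether it lies in $\J_u$. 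In the Euclidean theory this step is the Federer--Vol'pert theorem; in CC spaces no such general theorem is available -- this is tied to the open rectifiability problem mentioned in the introduction. So you must actually prove that $|D_X^su|$-a.e.\ point of $\J_u^E$ admits one-sided approximate limits in the CC half-ball sense with distinct values, i.e.\ is a genuine approximate $X$-jump point. This is a real analytic input which the paper obtains from the trace theorems of \cite{vittone2012} together with \cite[Theorem 4.2]{AmbrosioAhlfors}, packaged into Proposition~\ref{prop_defequivsbvx}: the paper does not verify Definition~\ref{def_sbvx} directly, but instead shows that $D_X^su=f\,\nu_R\,\hau^{Q-1}\res R$ for a countably $X$-rectifiable $R$, and then invokes the equivalence (ii)$\Rightarrow$(i) of that proposition, whose proof is where the jump points are identified. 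Note that reaching this representation also requires converting your $\hau^{n-1}_E$-density into an $\hau^{Q-1}$-density with direction $\nu_R$ (the factors $\sigma_1,\sigma_2$ in the paper, coming from \cite{vittone2012,DonMagnani}); your proposal leaves this conversion essentially unaddressed.

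A second, more minor point concerns the characteristic set. You invoke the $\hau^{Q-1}$-negligibility of the characteristic locus of a Euclidean $C^1$-hypersurface as ``a standard fact in CC geometry.'' Such results are known in Carnot groups and related settings, but the paper works in arbitrary equiregular CC spaces, where citing this as standard is not safe -- and it is also unnecessary. The paper's argument avoids it: at a characteristic point $p$ one has $\langle X_i(p),\nu^E(p)\rangle=0$ for every $i$, so the measure $(u^+_E-u^-_E)\langle X_\cdot,\nu^E\rangle\,\hau^{n-1}_E\res\J^E_u$ simply does not charge the characteristic sets $S_j^{\ch}$, regardless of their $\hau^{Q-1}$-measure; one may therefore restrict to $R:=\J_u^E\setminus\bigcup_j S_j^{\ch}$, which is countably $X$-rectifiable because each $S_j\setminus S_j^{\ch}$ is a $C^1_X$-hypersurface. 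Your closing observation that countable $X$-rectifiability of $\J_u$ itself then follows via Theorem~\ref{teo_prop3.76}\eqref{1implication} is sound, but it only settles condition (ii) of Definition~\ref{def_sbvx}; the missing Cantor-part step described above remains the essential gap.
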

\begin{proof}
$(i)$  Let $u \in \bv_{\loc}(\Omega)$. For every open set $A \subset \subset \Omega$, for every $1 \leq i \leq m$ and for every $\vp \in C^1_c(A)$ we have
    \begin{align*}
    \int_A u X_i^*\vp d \leb^n&=\sum_{t=1}^n\int_A u \frac{\pp (a_{i,t}\vp)}{\pp x_t}d\leb^n
    =- \int_A \vp\; d\left(\sum_{t=1}^n (a_{i,t}D_tu) \right),
    \end{align*}
as claimed.

$(ii)$ For every $u \in \sbv_{\loc}(\Omega)$ we write $Du= D^{a,E} u \leb^n+(u^+_E-u^-_E)\nu^E\hau^{n-1}_E \res \J^E_u$, where $D^{a,E} u$ denotes the (Euclidean) approximate gradient of $u$,  $\J_u^E$ is the (Euclidean) jump set of $u$ oriented by its (Euclidean) unit normal $\nu^E=(\nu_1^E,\dots \nu_n^E)$ and $u^+_E$ and $u^-_E$ are the (Euclidean) traces of $u$ on $\J_u^E$. By statement $(i)$ we know that $u \in \bv_{X,\loc}(\Omega)$ and 
\begin{align*}
D_{X_i}u=\langle D^{a,E} u, X_i \rangle \leb^n+(u^+_E-u^-_E) \langle \nu^E,X_i \rangle \hau^{n-1}_E \res \J^E_u.
\end{align*}
We know that $ \J_u^E$ is countably rectifiable in the Euclidean sense, hence, up to modifying $ \J_u^E$ on a $\hau^{n-1}_E$-negligible set, there exists a countable collection of $C^1$-hypersurfaces $(S_j)_{j \in \N}$ such that
\[
\J_u^E \subset \bigcup_{j \in \N} S_j.
\]
Without loss of generality we can assume that the $C^1$-hypersurfaces $(S_j)_{j \in \N}$ are pairwise disjoint; in this way $\nu^E$ coincides with the Euclidean unit normal $\nu^E_{S_j}$ to $S_j$ on $\J_u^E\cap S_j$. 
For every $j \in \N$ we introduce the characteristic set $S^\ch_j\subset S_j$ as
\[
S^\ch_j \ceq \lbrace p \in S_j: \operatorname{span}(X_1(p),\dots,X_m(p) ) \se T_pS \rbrace.
\]
For $p \in \J^E_u$ let $k\in\N$ be such that $p \in S_k$; if $p\in S^\ch_k$, then  $\langle X_i(p),\nu^E(p) \rangle=\langle X_i(p),\nu^E_{S_k}(p) \rangle=0$ for every $i \in \lbrace 1, \dots, m \rbrace$, hence we can rewrite $D_{X_i}u$ as
\begin{equation}\label{eq_repdxu}
D_{X_i}u=\langle X_i,D^{a,E} u \rangle \leb^n+(u^+_E-u^-_E) \langle X_i,\nu^E \rangle \hau^{n-1}_E \res R, 
\end{equation}
where
\[
R:= \J^E_u    \setminus \bigcup_{j \in \N} S^\ch_j.
\]
The set $R$ is countably $X$-rectifiable because $R\subset\bigcup_{j \in \N} (S_j\setminus S^\ch_j)$ and each $S_j\setminus S^\ch_j$ is a $C^1_X$-hypersurface. For $p\in R$, let  $k\in\N$ be the unique integer such that $p\in S_k\setminus S^\ch_k$ and let $f$ be a $C^1$ defining function for $S_k$ in a neighborhood of $p$; then
\[
(\nu_R)_i(p)=\frac{X_if(p)}{|Xf(p)|}=\frac{\langle X_i(p),\nabla f(p)\rangle}{|Xf(p)|}
=\frac{\langle X_i(p),\nu^E(p)\rangle}{|Xf(p)|}|\nabla f(p)|,\qquad \forall\: i=1,\dots,m,
\]
i.e.,
\begin{equation}
    \label{eq_normalivarie}
    \langle X_i,\nu^E\rangle = \sigma_1 (\nu_R)_i,\qquad \forall\: i=1,\dots,m
\end{equation}
for a suitable function $\sigma_1\colon R\to(0,+\infty)$.
As in the proof of Proposition~\ref{prop_defequivsbvx} we observe that, locally, each $C^1$- and $C^1_X$-hypersurface $S_j\setminus S_j^\ch$ separates the space into two connected open components of locally finite $X$-perimeter; combining~\cite[Propositions~4.1 and~4.5, Remark~4.7 and Corollary~4.14]{vittone2012} (see also~\cite{DonMagnani}) it can be shown that these $X$-perimeter measures have an integral representation with respect to both $\hau^{Q-1}\res (S_j\setminus S_j^\ch)$  and $\hau^{n-1}_E\res (S_j\setminus S_j^\ch)$. Ultimately, this gives
\begin{equation}
    \label{eq_hausdorffvarie}
    \hau^{n-1}_E \res R = \sigma_2 \hau^{Q-1}\res R
\end{equation}
for a suitable $\sigma_2\colon R\to(0,+\infty)$.
Combining~\eqref{eq_repdxu},~\eqref{eq_normalivarie} and~\eqref{eq_hausdorffvarie} we obtain
\[
D_{X_i}u=\langle X_i,D^{a,E} u \rangle \leb^n+(u^+_E-u^-_E) \sigma_1\sigma_2 (\nu_R)_i \hau^{Q-1} \res R,
\]
i.e.,
\[
D_X^su=(u^+_E-u^-_E) \sigma_1\sigma_2 \nu_R \hau^{Q-1} \res R.
\]
Proposition~\ref{prop_defequivsbvx} implies that $u\in \sbv_{X,\loc}(\Omega)$, as claimed.
\end{proof}

\begin{rem}
    A deeper inspection of the proof of Lemma~\ref{lem_bvbvx} $(ii)$ and, in particular, of the results from~\cite{vittone2012} that were used reveals that, if $u\in\sbv_{\loc}(\Omega)$ and $R$ is as in the proof, then the traces  $u^\pm$ equal the Euclidean ones $u_E^\pm$. 
\end{rem}

The following result is an easy consequence of the celebrated Lusin-type theorem for gradients by~G.~Alberti~\cite{alberti}.

\begin{teo}\label{teo_albertisbvx}
    For every $w \in L^1_{\loc}(\Omega;\R^m)$ there exists $u \in \sbv_{X,\loc}(\Omega)$ such that $D^{\ap}_Xu=w\ \leb^n$-a.e.\ on $\Omega$. 
    
    \noindent Moreover, if $\Omega$ is bounded, then there exists $C=C(\Omega)>0$ such that, for every $w \in L^1(\Omega;\R^m)$, the function $u$ can be chosen in such a way that $|D_Xu|(\Omega) \leq C \nl w \nr_{L^1(\Omega)}$.
\end{teo}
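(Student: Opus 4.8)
The plan is to reduce the statement to its Euclidean counterpart and then invoke Alberti's theorem. First I would choose a Euclidean vector field realizing $w$ as an $X$-gradient. Writing $A(x)$ for the $m\times n$ matrix whose rows are the coefficient vectors $(a_{i,1}(x),\dots,a_{i,n}(x))$ of $X_i$, the fields $X_1,\dots,X_m$ are linearly independent, so the Gram matrix $G\ceq AA^{\mathsf T}$ is everywhere invertible with smooth entries; I set
\[
v\ceq A^{\mathsf T}G^{-1}w\in L^1_{\loc}(\Omega;\R^n),
\]
which solves $Av=w$ pointwise, i.e.\ $\langle X_i,v\rangle=w_i$ for every $i$. (Note that the naive choice $\sum_i w_iX_i$ does \emph{not} work: the $X_i$ are orthonormal for the auxiliary Riemannian metric but not for the Euclidean pairing used in Lemma~\ref{lem_bvbvx}.) Since $G^{-1}$ and $A$ have smooth entries, $\nl v\nr_{L^1(\Omega')}\le C(\Omega')\nl w\nr_{L^1(\Omega')}$ on every $\Omega'\subset\subset\Omega$.

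Granting the Euclidean fact that there is $u\in\sbv_{\loc}(\Omega)$ whose (Euclidean) approximate gradient equals $v$ a.e., the conclusion is immediate. By Lemma~\ref{lem_bvbvx}(ii) we get $u\in\sbv_{X,\loc}(\Omega)$, while Lemma~\ref{lem_bvbvx}(i) together with Theorem~\ref{teo_gradapprox} shows that the density of $D^a_{X_i}u$ equals $\langle X_i,v\rangle=w_i$, i.e.\ $D^{\ap}_Xu=w$ a.e. For the quantitative part, when $\Omega$ is bounded one has $D_Xu=A\,Du$ as measures, so $|D_Xu|(\Omega)\le(\sup_\Omega\|A\|)\,|Du|(\Omega)$; combined with the Euclidean bound $|Du|(\Omega)\le c_n\nl v\nr_{L^1(\Omega)}$ and the estimate on $v$, this gives $|D_Xu|(\Omega)\le C(\Omega)\nl w\nr_{L^1(\Omega)}$.

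It thus remains to produce the Euclidean $u$, and this is where Alberti's Lusin-type theorem enters. For each $k$ it provides $u_k\in C^1(\Omega)$ with $\leb^n(\{\nabla u_k\neq v\})<2^{-k}$, with $\nl u_k\nr_{L^1}$ and $\nl u_k\nr_{L^\infty}$ as small as desired, and with $\int_\Omega|\nabla u_k|\le c_n\nl v\nr_{L^1}$. Each $u_k$ is $C^1$ but matches $v$ only off a set of small \emph{positive} measure; to upgrade to an a.e.\ identity I would glue the $u_k$ along good regions. Covering the bad set by a countable union $U_k$ of open dyadic cubes of total measure $<2^{1-k}$ (hence with $\leb^n$-negligible boundary), a coarea argument applied to $\operatorname{dist}_E(\cdot,U_k)$ selects a threshold $t_k>0$ for which $H_k\ceq\{\operatorname{dist}_E(\cdot,U_k)>t_k\}$ has finite perimeter, satisfies $\nabla u_k=v$ a.e.\ on $H_k$ and $\leb^n(\Omega\setminus H_k)<2^{2-k}$, and carries a trace integral $\int_{\partial^*H_k}|u_k|\,d\hau^{n-1}$ controlled by $\nl u_k\nr_{L^1}$. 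Setting $G_k\ceq H_k\setminus\bigcup_{j<k}H_j$ and $u\ceq\sum_k u_k\,\chi_{G_k}$ yields disjoint finite-perimeter pieces covering $\Omega$ up to a null set, on each of which $\nabla u=\nabla u_k=v$ a.e. Every $u_k\,\chi_{G_k}$ lies in $\sbv_{\loc}(\Omega)$ (a $C^1$ function times a finite-perimeter indicator), hence in $\sbv_{X,\loc}(\Omega)$ by Lemma~\ref{lem_bvbvx}(ii); since $\sum_k\int_{G_k}|\nabla u_k|=\sum_k\int_{G_k}|v|\le\nl v\nr_{L^1}$ and the jump masses are summable, the series converges in $\bv_X$, and Theorem~\ref{teo_sbvxchiuso} gives $u\in\sbv_{X,\loc}(\Omega)$.

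The main obstacle is precisely this passage from Alberti's \emph{approximate} matching (equality only off a small but positive-measure set) to the a.e.\ identity $\nabla u=v$ carrying an SBV structure: one must arrange the cuts $H_k$ to have finite perimeter and to contribute a summable total jump mass. The delicate point is that the perimeters of the cut surfaces are not controlled a priori, so the smallness of $\nl u_k\nr_{L^1}$ and $\nl u_k\nr_{L^\infty}$ (granted by Alberti) has to be chosen \emph{after} the previous cuts $H_1,\dots,H_{k-1}$ are fixed, in order to dominate the already-known quantities $\sum_{j<k}\operatorname{Per}(H_j)$ in the cross terms $\int_{\partial^*H_j}|u_k|\,d\hau^{n-1}$, while the coarea selection is tuned to the self-interaction term on $\partial^*H_k$. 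For general $w\in L^1_{\loc}$ and unbounded $\Omega$ the same construction is run directly on $\Omega$, all estimates being local, so that $u\in\bv_{X,\loc}(\Omega)$, which is what is required. (Alternatively, the existence of the Euclidean $u$ with $|Du|(\Omega)\le c_n\nl v\nr_{L^1}$ may simply be quoted as a known consequence of Alberti's theorem, bypassing the gluing altogether.)
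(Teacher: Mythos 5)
Your proposal is correct and, at the structural level, it is the same proof as the paper's: reduce to the Euclidean statement, invoke Alberti's theorem \cite[Theorem 3]{alberti}, and transfer the conclusion through Lemma~\ref{lem_bvbvx} and Theorem~\ref{teo_gradapprox}. Two comparisons are worth recording. First, your replacement of the ``naive'' field by $v\ceq A^{\mathsf T}(AA^{\mathsf T})^{-1}w$ is not a cosmetic variant but an actual repair: the paper's own proof applies Alberti to $X_w\ceq w_1X_1+\dots+w_mX_m$ and concludes $D^{\ap}_Xu=w$, but by Lemma~\ref{lem_bvbvx}(i) the density of $D^a_{X_i}u$ is the \emph{Euclidean} pairing $\langle X_i,D^{a,E}u\rangle$, so that choice actually yields $D^{\ap}_Xu=G_E\,w$, where $G_E\ceq(\langle X_i,X_j\rangle_{\R^n})_{i,j}$ is the Euclidean Gram matrix; this differs from $w$ whenever $G_E\neq\mathrm{Id}$ (already in the first Heisenberg group, with $X_1=\pp_x-\tfrac y2\pp_t$ and $X_2=\pp_y+\tfrac x2\pp_t$, one has $\langle X_1,X_1\rangle_{\R^n}=1+y^2/4$). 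Your parenthetical remark thus identifies a genuine, though easily repaired, oversight in the published argument; note also that your $v$ equals $X_{w'}$ with $w'\ceq G_E^{-1}w$, so the fix preserves the paper's scheme verbatim. Second, the long gluing construction you outline (cutting the $C^1$ Lusin approximants along finite-perimeter sets chosen by coarea, with $\nl u_k\nr_{L^\infty}$ calibrated against the perimeters of the earlier cuts) is workable but redundant: \cite[Theorem 3]{alberti} is already the SBV statement with the bound $|Du|(\Omega)\leq C\nl v\nr_{L^1(\Omega)}$, so the ``alternative'' in your last parenthesis is exactly what the paper does, and is the route to prefer. Finally, for $w\in L^1_{\loc}$ on a possibly unbounded $\Omega$, your remark that ``the same construction is run directly on $\Omega$'' glosses over the one step that does need care, since Alberti's theorem requires $v\in L^1$: the paper handles this by exhausting $\Omega$ with open sets $\Omega_i\subset\subset\Omega_{i+1}$, solving on each piece, and accepting the extra jump terms created along the interfaces, which keeps the patched function in $\sbv_{X,\loc}(\Omega)$; your write-up should make this exhaustion explicit.
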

\begin{proof}
Consider the horizontal vector field $X_w:=w_1 X_1+\dots+w_m X_m$. Consider a sequence of open sets $(\Omega_i)_{i \in \N}$ such that, for every $i \in \N$, $\Omega_i \subset\subset \Omega_{i+1}$, $\Omega_i \se \Omega$ and $(\Omega_i)_{i \in \N}$ invades $\Omega$ when $i \to +\infty$. Clearly, $X_w \in L^1(\Omega_i;\R^n)$ for every $i \in \N$.  By~\cite[Theorem 3]{alberti} there exists $u_i\in \sbv(\Omega_i)$ whose (Euclidean) approximate gradient is $X_w|_{\Omega_i}$. For every $i \in \N$ define the sets 
\[
U_i \ceq \begin{cases}
    \Omega_1 \text{ if } i=1\\
    \Omega_{i+1} \setminus \Omega_i  \text{ if } i>1.
\end{cases}
\]
The function $u$ defined as $u_i$ on $U_i$ belongs to $\sbv_{\loc}(\Omega)$ and its (Euclidean) approximate gradient is $X_w$. The latter, together with Lemma~\ref{lem_bvbvx}, proves the first part of the statement.
The second part is a consequence of the estimate stated in~\cite[Theorem 3]{alberti}.
\end{proof}

We conclude with a result, Theorem~\ref{teo_produzionedisbvx}, were we provide a recipe to produce lots of $\sbv_X$ functions: in fact,  {\em any} $L^1$ function on {\em any} countably $X$-rectifiable set can appear as the jump part of an $\sbv_{X}$ function.

\begin{lem}\label{lem_jumpc1xsup}
        Let  $S \se \Omega$ be a $C^1_X$-hypersurface oriented by a normal $\nu$, let $  \theta \in L^1(\hau^{Q-1} \res S)$ and $\delta>0$. Then there exists $u \in \sbv_{X}(\Omega)$ such that
        \[
        D_X^ju \equiv \theta \nu \hau^{Q-1} \res S, \quad \nl u \nr_{L^1(\Omega)}<\delta, \quad \text{ and }\quad |D_Xu| (\Omega) \leq (2+\delta)\nl \theta  \nr_{L^1(\hau^{Q-1} \res S)}.
        \]
\end{lem}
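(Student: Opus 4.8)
The plan is to build the prescribed jump out of many elementary \emph{one-sided slab} functions, each supported in a thin tube around $S$, arranged so that the total variation splits into the jump mass $\|\theta\|_{L^1}$ plus an absolutely continuous contribution of essentially the same size — this is where the factor $2$ comes from. The construction reduces, by a partition of unity and a telescoping approximation, to a single smooth \emph{building block}, which I describe first. Suppose we are in a ball $B$ with $S\cap B=\{f=0\}$, $f\in C^1_X(B)$, $Xf\neq 0$ and $Xf/|Xf|=\nu$, and we are given $g\in C^\infty_c(B)$. Let $\lambda\colon S\to(0,+\infty)$ be the density of Theorem~\ref{teo_prop3.76}, so that, exactly as in the proof of Proposition~\ref{prop_defequivsbvx} (via \cite{vittone2012} and \cite{AmbrosioAhlfors}), $D_X^s\chi_{\{f>0\}}=\lambda\,\nu\,\hau^{Q-1}\res S$. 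Fix $\ve>0$ and a profile $h\colon\R\to[0,1]$ vanishing on $(-\infty,0]\cup[\ve,+\infty)$, with a single upward jump of height $1$ at $0$ and decreasing smoothly from $1$ to $0$ on $(0,\ve)$, so that $|Dh|(\R)=2$ and $\int_0^\ve|h'|\,dt=1$. I set $u_g\ceq g\,h(f)$, supported in the thin tube $\spt g\cap\{0\le f\le \ve\}$, which for $\ve$ small extends by $0$ to an element of $\bv_X(\Omega)$.

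Since $h(f)$ is $C^1_X$ on $\{f>0\}$, constant $0$ on $\{f<0\}$, and has traces $1$ and $0$ across $S$, its only singularity is the jump on $S$; hence $D_X(h(f))=h'(f)\,Xf\,\leb^n+\lambda\,\nu\,\hau^{Q-1}\res S$. By the product rule (Proposition~\ref{pro_aux1}(i)),
\[
D_Xu_g=g\,D_X(h(f))+h(f)\,Xg\,\leb^n,\qquad\text{so}\qquad D_X^su_g=g\,\lambda\,\nu\,\hau^{Q-1}\res S,
\]
which by Proposition~\ref{prop_defequivsbvx} shows $u_g\in\sbv_X(\Omega)$ with realized jump density $g\lambda$. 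As $|h(f)|\le 1$ is supported in a tube of vanishing Lebesgue measure, $\|u_g\|_{L^1}\to 0$ as $\ve\to 0$, and the term $h(f)Xg\,\leb^n$ has mass $O(\ve)$. The remaining variation is
\[
\int|g|\,d|D_X(h(f))|=\int_S|g|\lambda\,d\hau^{Q-1}+\int_{\{0<f<\ve\}}|g|\,|h'(f)|\,|Xf|\,d\leb^n .
\]

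The hard part is controlling the second (absolutely continuous) integral by essentially the jump mass. Applying the coarea formula in CC spaces (\cite{garofalonhieu}) to $f$, this integral equals $\int_0^\ve|h'(t)|\big(\int_{\{f=t\}}|g|\,dP^X_{\{f>t\}}\big)\,dt$; as $t\to 0^+$ the inner integral tends to $\int_S|g|\lambda\,d\hau^{Q-1}$, because the level sets $\{f=t\}$ are $C^1_X$-hypersurfaces whose $X$-perimeter measures converge weakly-$*$ to $\lambda\,\hau^{Q-1}\res S$ (by \cite{vittone2012,AmbrosioAhlfors}) and $g$ is continuous with compact support. Since $\int_0^\ve|h'|=1$, choosing $\ve$ small forces this term to be $\le(1+\eta)\int_S|g|\lambda\,d\hau^{Q-1}$, whence
\[
|D_Xu_g|(\Omega)\le (2+\eta)\,\|g\lambda\|_{L^1(\hau^{Q-1}\res S)} .
\]
I expect this coarea step — the weak-$*$ convergence $P^X_{\{f>t\}}\to\lambda\,\hau^{Q-1}\res S$ — to be the genuine obstacle, where the fine structure results of \cite{vittone2012} and the Ahlfors-type representation of \cite{AmbrosioAhlfors} carry the weight.

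To finish, for general $\theta$ I first use a smooth partition of unity subordinate to a cover of $S$ by such balls to write $\theta=\sum_j\theta_j$ with each $\theta_j$ supported in one chart and $\sum_j\|\theta_j\|_{L^1}=\|\theta\|_{L^1}$. In each chart, since $\lambda$ is bounded away from $0$ and $\infty$, the function $\theta_j/\lambda$ lies in $L^1(\hau^{Q-1}\res S)$ and is approximated by restrictions $g|_S$ of smooth functions; a telescoping choice with geometrically decreasing errors yields realized densities $\psi_{j,k}=g_{j,k}\lambda|_S$ with $\sum_k\psi_{j,k}=\theta_j$ in $L^1$ and $\sum_k\|\psi_{j,k}\|_{L^1}\le(1+\eta)\|\theta_j\|_{L^1}$. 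Applying the building block to each $g_{j,k}$ with $\ve=\ve_{j,k}$ so small that $\|u_{j,k}\|_{L^1}<\delta\,2^{-j-k}$ and the variation estimate above both hold, I set $u\ceq\sum_{j,k}u_{j,k}$. The series converges absolutely in $\bv_X(\Omega)$, so $\|u\|_{L^1}<\delta$; arguing as in the proof of Theorem~\ref{teo_sbvxchiuso}, $D_X^su=\sum_{j,k}\psi_{j,k}\,\nu\,\hau^{Q-1}\res S=\theta\,\nu\,\hau^{Q-1}\res S$. Since $\J_u=\{\theta\neq0\}\cap S$ is a subset of the single $C^1_X$-hypersurface $S$, condition (ii) of Definition~\ref{def_sbvx} holds automatically and $D_X^cu=0$, so $u\in\sbv_X(\Omega)$ with $D_X^ju=\theta\,\nu\,\hau^{Q-1}\res S$. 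Finally
\[
|D_Xu|(\Omega)\le\sum_{j,k}|D_Xu_{j,k}|(\Omega)\le(2+\eta)\sum_{j,k}\|\psi_{j,k}\|_{L^1}\le(2+\eta)(1+\eta)\,\|\theta\|_{L^1(\hau^{Q-1}\res S)},
\]
and choosing $\eta$ with $(2+\eta)(1+\eta)\le 2+\delta$ completes the proof.
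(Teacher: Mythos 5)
Your overall architecture --- cover $S$ by charts with defining functions $f$, build in each chart a one-sided function supported in $\{f>0\}$ whose trace on $S$ realizes the prescribed density, and sum the pieces with absolutely convergent $\bv_X$-norms so that Proposition~\ref{prop_defequivsbvx} and the closedness of $\sbv_X$ finish the job --- is the same as the paper's, including the bookkeeping that yields $2+\delta$ (one unit of mass from the jump, one from the absolutely continuous part). The difference lies in how the one-sided building block is produced, and that is exactly where your proof has a genuine gap. The paper invokes the extension theorem \cite[Theorem 1.5]{vittone2012}, which directly supplies $\tilde u_j \in C^\infty(B^+_j)\cap W^{1,1}_X(B^+_j)$ with prescribed trace $\zeta_j\theta_j$ on $S$ and with $\nl X\tilde u_j\nr_{L^1(B_j^+)} \le (1+\eta)\nl \zeta_j\theta_j\nr_{L^1(P_X\res S)}$; this theorem \emph{is} the hard estimate. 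You instead take $u_g = g\,h(f)$ and try to prove that estimate by hand, reducing it via the coarea formula to the claim that $\int |g|\,dP^X_{\{f>t\}} \to \int_S |g|\,\lambda\,d\hau^{Q-1}$ as $t \to 0^+$.

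That claim is the unproven crux, and the references you attach to it do not contain it. What \cite{vittone2012} and \cite{AmbrosioAhlfors} give is the representation $P^X_{\{f>t\}} = \lambda_t\,\hau^{Q-1}\res\{f=t\}$ for each \emph{fixed} $t$; combined with $\chi_{\{f>t\}}\to\chi_{\{f>0\}}$ in $L^1_{\loc}$ and lower semicontinuity of the weighted perimeter, this only yields
\begin{equation*}
\liminf_{t\to 0^+}\int |g|\,dP^X_{\{f>t\}} \;\ge\; \int_S |g|\,\lambda\,d\hau^{Q-1},
\end{equation*}
which is the wrong direction: your variation bound needs the reverse $\limsup$ inequality, equivalently the Minkowski-content-type statement $\tfrac{1}{\ve}\int_{\{0<f<\ve\}}|g|\,|Xf|\,d\leb^n \to \int_S|g|\,\lambda\,d\hau^{Q-1}$. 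Perimeter is in general only lower semicontinuous, and for merely intrinsically regular ($C^1_X$) level sets --- which can be fractal from the Euclidean viewpoint --- upper semicontinuity along $t\to 0^+$ is a nontrivial fact; one could presumably prove it via the implicit-function/area-formula representation of the perimeter of $X$-regular domains (\cite[Section 4]{vittone2012}, \cite{DonMagnani}), using uniform convergence of the graph parametrizations of $\{f=t\}$ together with continuity of $Xf$, but no such argument appears in your proof. Without it, the absolutely continuous part of $D_Xu_g$ is not controlled by $(1+\eta)$ times the jump mass and the factor $2+\delta$ is lost. The most economical repair is to replace your building block by the paper's: let \cite[Theorem 1.5]{vittone2012} hand you the trace-prescribed one-sided function with the $(1+\eta)$ gradient bound, after which your partition-of-unity, telescoping and summation steps go through essentially verbatim. (Two smaller points you would also need to tighten: the formula $D_X(h(f)) = h'(f)Xf\,\leb^n + \lambda\,\nu\,\hau^{Q-1}\res S$ requires the trace/extension theory of \cite{vittone2012}, not just Proposition~\ref{pro_aux1}(i), since it concerns a zero-extension across $S$; and $\lambda$ is only \emph{locally} bounded away from $0$ and $\infty$, so the division $\theta_j/\lambda$ must be performed on compact pieces, exactly as the paper does with its functions $\sigma_j$.)
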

\begin{proof}
Fix  a countable family $(B_j)_{j \in \N}$ of balls, contained in $\Omega$ and with centers on $S$, and functions $f_j \in C^1_X(B_j)$ such that, for every $j \in \N$, 
\[
 S \cap B_j=\lbrace q \in B_j: f_j(q)=0 \rbrace,\qquad Xf_j \neq 0 \text{ on }B_j, \qquad S \subset \bigcup_{j \in \N}B_j. 
\]
We can also assume that $\langle Xf_j,\nu\rangle>0$ on $S \cap B_j$. 
Without loss of generality, we can assume that each ball $B_j$ intersects only a finite number of other balls
 of the collection. Now consider a partition of the unity associated with $(B_j)_{j \in \N}$, i.e., a collection of functions $(\zeta_j)_{j \in \N}$ such that, for every $j \in \N$, 
 \[
 \zeta_j \in C^\infty_c(B_j), \qquad  0 \leq \zeta_j \leq 1,\quad \text{ and }\quad \sum_{j \in \N}\zeta_j \equiv 1 \text{ on }S.
 \]
Fix $j \in \N$, we define 
\begin{equation}\label{eq_defsemi}
B^+_j \ceq \lbrace q \in B_j: f_j(q)>0 \rbrace, \qquad B^-_j \ceq \lbrace q \in B_j: f_j(q)<0 \rbrace.
\end{equation}
For every $j$, let $\theta_j:=\theta/\sigma_j\in L^1(S\cap B_j)$, where $\sigma_j$ is a  function on $S\cap B_j$ with $\inf \sigma_j>0$ that will be introduced later.
Using \cite[Theorem 1.5]{vittone2012} we can find $\tilde{u}_j \in C^\infty(B^+_j) \cap W^{1,1}_X(B^+_j) $\footnote{Remember that $W^{1,1}_X$ is the space of functions $u$ such that both $u$ and $Xu$ belong to $L^1$.} such that spt\;$\tilde u_j\subset\subset B_j$ and  
\[
\nl \tilde{u}_j \nr_{L^1(B^+_j)} \leq \frac{\delta}{2^j}, \qquad \nl X \tilde{u}_j \nr_{L^1(B^+_j)}  \leq \left( 1+\frac{\delta}{2^j} \right) \nl \zeta_j \theta_j \nr_{L^1\big(P^{B^+_j}_X \res S \big)},
\]
and, for $\hau^{Q-1}$-almost every $q \in S \cap B_j$, we have
\[
\lim_{r \to 0} \fint_{B^+_\nu(q,r)} |\tilde{u}_j-\zeta_j \theta_j|d\leb^n=0.
\]
 where $B^+_\nu(q,r)$ is defined as in Definition \ref{def_b+b-}. We define $u_j$ on $\Omega$ as 
 \[
u_j \ceq \begin{cases}
    \tilde{u}_j \text{ on } B^+_j\\
    0 \text{ on } \R^n \setminus B^+_j.
\end{cases}
 \]
 By \cite[Theorem 5.3 and Theorem 1.3]{vittone2012}, $u_j \in \bv_X(\Omega)$ and,  by using the representation of the $X$-perimeter measure and the coarea formula for $\bv_X$ functions (see \cite{AmbrosioAhlfors,fssc})  we can find a constant $C_j>1$ and a $\hau^{Q-1}$-measurable function $\sigma_j: S \cap B_j \to [1/C_j,C_j]$ such that
 \begin{align*}
     D_Xu_j&=(X\tilde{u}_j) \leb^n \res B^+_j+\zeta_j\theta_j \sigma_j \nu \hau^{Q-1}\res (B_j \cap S)\\
     &=(X\tilde{u}_j) \leb^n \res B^+_j+\zeta_j\theta \nu \hau^{Q-1}\res (B_j \cap S),
 \end{align*}
the latter implying that $u_j \in \sbv_X(\Omega)$. The function $ u \ceq \sum_{j \in \N}u_j $ satisfies the statement of the Lemma: clearly $u \in \sbv_X(\Omega)$, $D_X^ju \equiv \theta \nu \hau^{Q-1} \res S$ and $\nl u \nr_{L^1(\Omega)}<\delta $. Now let us prove that the estimate on $|D_Xu|(\Omega)$. We observe
\begin{align*}
|D_Xu |(\Omega)& \leq \sum_{j \in \N}|D_Xu_j|(\Omega) \leq \sum_{j \in \N} \left( \nl X\tilde{u}_j \nr_{L^1(B_j^+)}+\nl \zeta_j \theta \nr_{L^1(\hau^{Q-1} \res S \cap B_j)}  \right)\\
&\leq \sum_{j \in \N}\left(\left( 1+\frac{\delta}{2^j} \right) \nl \zeta_j \theta_j \nr_{L^1\big(P^{B^+_j}_X \res S \big)}+ \nl \zeta_j \theta \nr_{L^1(\hau^{Q-1} \res S\cap B_j)} \right).
\end{align*}
Again, by the representation of the $X$-perimeter measure, we have
\[
\nl \zeta_j \theta_j \nr_{L^1\big(P^{B^+_j}_X \res S \big)}=\nl \zeta_j \theta \nr_{L^1(\hau^{Q-1} \res S \cap B_j)},
\]
the latter implying that
\begin{align*}
|D_Xu|(\Omega)&\leq  \sum_{j \in \N}\left( 2+\frac{\delta}{2^j} \right) \nl \zeta_j \theta \nr_{L^1(\hau^{Q-1} \res S\cap B_j)}\leq (2+\delta)\nl \theta \nr_{L^1(\hau^{Q-1} \res S)},
\end{align*}
concluding the proof.
\end{proof}

\begin{teo}\label{teo_produzionedisbvx}
     Let $S \se \Omega$  be a countably $X$-rectifiable set oriented by $\nu$; let $  \theta \in L^1(\hau^{Q-1} \res S)$ and $\delta>0$ be fixed. Then there exists $u \in \sbv_{X}(\Omega)$ such that
        \[
        D_X^ju \equiv \theta  \nu \hau^{Q-1} \res S, \quad \nl u \nr_{L^1(\Omega)}<\delta, \quad \text{ and }\quad |D_Xu| (\Omega) \leq (2+\delta)\nl \theta  \nr_{L^1(\hau^{Q-1} \res S)}.
        \]
\end{teo}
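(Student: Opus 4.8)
The plan is to bootstrap from the single–hypersurface case already established in Lemma~\ref{lem_jumpc1xsup}. The key observation is that a countably $X$-rectifiable set $S$ can be decomposed, up to an $\hau^{Q-1}$-negligible set, into a countable disjoint union $S=\bigcup_{j\in\N}S_j$ where each $S_j$ is a Borel subset of a $C^1_X$-hypersurface $\Sigma_j$. Indeed, by Definition~\ref{def_Xrectifiable} there is a countable family of $C^1_X$-hypersurfaces $(\Sigma_j)_{j\in\N}$ covering $S$ up to a null set, and setting $S_j:=(S\cap\Sigma_j)\setminus\bigcup_{k<j}\Sigma_k$ makes the pieces pairwise disjoint. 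On each $\Sigma_j$ the orientation $\nu$ of $S$ coincides, up to a sign, with the horizontal normal $\nu_{\Sigma_j}$ at $\hau^{Q-1}$-a.e.\ point of $S_j$ (by \cite[Proposition 2.18]{dv}); after a harmless sign change of $\theta$ on $S_j$ we may assume $\nu=\nu_{\Sigma_j}$ there.

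First I would apply Lemma~\ref{lem_jumpc1xsup} on each hypersurface $\Sigma_j$ to the datum $\theta_j:=\theta\chi_{S_j}\in L^1(\hau^{Q-1}\res\Sigma_j)$, with a tolerance parameter $\delta_j:=\delta/2^j$. This produces functions $u_j\in\sbv_X(\Omega)$ with
\[
D_X^j u_j\equiv\theta_j\,\nu_{\Sigma_j}\,\hau^{Q-1}\res\Sigma_j=\theta\,\nu\,\hau^{Q-1}\res S_j,\qquad
\nl u_j\nr_{L^1(\Omega)}<\frac{\delta}{2^{j+1}},
\]
and with the norm bound $|D_Xu_j|(\Omega)\le(2+\delta_j)\nl\theta_j\nr_{L^1(\hau^{Q-1}\res S_j)}\le(2+\delta)\nl\theta\nr_{L^1(\hau^{Q-1}\res S_j)}$. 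Then I would set $u:=\sum_{j\in\N}u_j$. The $L^1$ bound is immediate from $\nl u\nr_{L^1(\Omega)}\le\sum_j\nl u_j\nr_{L^1(\Omega)}<\delta$, so the series converges in $L^1(\Omega)$. For the variation bound, since the $S_j$ are pairwise disjoint and $\sum_j\nl\theta\nr_{L^1(\hau^{Q-1}\res S_j)}=\nl\theta\nr_{L^1(\hau^{Q-1}\res S)}$, summing the individual estimates gives $\sum_j|D_Xu_j|(\Omega)\le(2+\delta)\nl\theta\nr_{L^1(\hau^{Q-1}\res S)}<+\infty$, whence $u\in\bv_X(\Omega)$ with $|D_Xu|(\Omega)\le(2+\delta)\nl\theta\nr_{L^1(\hau^{Q-1}\res S)}$. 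Because $\sbv_X(\Omega)$ is closed under sums and closed in $\bv_X(\Omega)$ (Theorem~\ref{teo_sbvxchiuso}), the limit $u$ lies in $\sbv_X(\Omega)$.

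The main obstacle is the computation of $D_X^j u$, i.e.\ verifying that passing to the infinite sum does not create spurious jump contributions or destroy the prescribed ones. Here I would argue that $D_Xu=\sum_j D_Xu_j$ as measures (justified by the $\bv_X$-convergence of the partial sums), and split each $D_Xu_j=D_X^a u_j+\theta\,\nu\,\hau^{Q-1}\res S_j$; the absolutely continuous parts sum to an $\leb^n$-absolutely continuous measure while the singular parts sum to $\theta\,\nu\,\hau^{Q-1}\res S$ since the $S_j$ are disjoint. The delicate point is that the jump set $\J_u$ equals $\{p\in S:\theta(p)\neq0\}$ up to $\hau^{Q-1}$-null sets, which follows from Proposition~\ref{prop_defequivsbvx} applied with $R:=S$ and $f:=\theta$: once we know $D_X^su=\theta\,\nu\,\hau^{Q-1}\res S$, that proposition identifies $D_X^j u=D_X^s u\res\J_u$ with $\theta\,\nu\,\hau^{Q-1}\res S$ and simultaneously guarantees $D_X^c u=0$, so $u\in\sbv_X(\Omega)$ with exactly the prescribed jump part.
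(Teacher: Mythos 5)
Your proof is correct and follows essentially the same route as the paper: decompose $S$ into countably many pairwise disjoint pieces carried by $C^1_X$-hypersurfaces, apply Lemma~\ref{lem_jumpc1xsup} to each piece with geometrically decaying tolerances $\delta/2^j$, and sum. The paper simply asserts that $u:=\sum_j u_j$ works, whereas you spell out the verification (convergence in $\bv_X$, closedness of $\sbv_X$ via Theorem~\ref{teo_sbvxchiuso}, and identification of $D_X^ju$ via Proposition~\ref{prop_defequivsbvx}); these added details are accurate and consistent with the paper's framework.
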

\begin{proof} 
Since $S$ is countably $X$-rectifiable there exists a countable collection $(S_i)_{i \in \N}$ of $C^1_X$-hyper\-surfaces such that
\[
\hau^{Q-1}\left( S \setminus \bigcup_{i \in \N} S_i \right)=0.
\]
Without loss of generality we can assume that the $C^1_X$-hypersurfaces $(S_i)_{i \in \N}$ are pairwise disjoint. We extend $\theta$ to $0$ outside  $S$ and, for every $i \in \N$, we define
\[
\theta_i \ceq \theta|_{S_i}.
\]
For every $i \in \N$ we use Lemma \ref{lem_jumpc1xsup} to obtain a function $u_i \in \sbv_{X,\loc}(\Omega)$ such that 
  \[
        D_X^ju_i \equiv \theta_i \nu \hau^{Q-1} \res S_i, \quad \nl u_i \nr_{L^1(\Omega)}<\frac{\delta}{2^i}, \quad \text{ and }\quad |D_Xu_i| (\Omega) \leq \left(2+\frac{\delta}{2^i}\right)\nl \theta_i  \nr_{L^1(\hau^{Q-1} \res S_i)}.
        \]
The function $u \ceq \sum_{i \in \N}u_i$ satisfies the statement of the Theorem.
\end{proof}

\section{Proof of Theorem~\ref{teo_introapprox}}\label{sec_approx}
\subsection{Construction of the approximating sequence}
This section is devoted to the construction of the approximating sequence $(u_k)_{k \in \N}$ of  $\sbv_X$ functions that will be used to prove Theorem \ref{teo_introapprox}. In the following Construction~\ref{Con_JeM} we start by proving that, if $u\in\bv_X(\Omega)$ and $\J_u$ is countably $X$-rectifiable, then it is possible to approximate $\J_u$  with a $X$-rectifiable set that can be in turn approximated with a $C_X^1$-hypersurface. We underline that the following construction is valid, as a particular case, for functions $u\in \sbv_X(\Omega)$.
\begin{construction}\label{Con_JeM}
Fix $u \in \bv_X(\Omega)$ with a countably $X$-rectifiable $\mathcal J_u$.
For every $\eta>0$ we define the set
\begin{equation}\label{eq_Jueta}
\J_{u,\eta} \ceq  \left\{x\in \J_u:|u^+(x)-u^-(x)|\geq  
\frac{1}{\eta}  \right\} \cap B(0,\eta).
\end{equation}
 Since the jump set $\J_u$ is countably $X$-rectifiable, also $\J_{u,\eta}$ is countably $X$-rectifiable for every $\eta>0$. Let us moreover observe that the set $\J_{u,\eta}$ is $X$-rectifiable, i.e., that $\hau^{Q-1}(\J_{u,\eta})<\infty$. In fact, thanks to Theorem~\ref{teo_prop3.76}, there exists a positive constant $C>0$, only depending on $\eta$ such that 
\[
\hau^{Q-1}(\J_{u,\eta}) \leq C\eta|D_Xu|(\Omega).
\]
Since the family $\J_{u,\eta}$ is increasing and invades $\J_u$ when $\eta \to +\infty$ we also have
\[
|D_Xu|(\J_u \setminus \J_{u,\eta}) \xrightarrow{\eta \to +\infty}0
\]
so that, for every $k \in \N$, we can choose an $\eta_k>0$ such that $(\eta_k)_{k \in \N}$ is increasing and
\begin{equation}\label{eq_appXrect}
|D_Xu|(\J_u \setminus \J_{u,\eta_k})<\frac{1}{k}.
\end{equation}
For the sake of brevity let us write $\J^k_u \ceq \J_{u,\eta_k}$. Now, for every $\delta>0$, using Lemma \ref{lem_unicaipersup}, we can find a $C^1_X$-hypersurface $M_\delta$ such that 
\[
\hau^{Q-1}(\J_{u}^k \setminus M_\delta)<\delta.
\]
By Theorem~\ref{teo_prop3.76} one has $|D_Xu| \res \J_u \ll \hau^{Q-1} \res \J_u$, so for every $k \in \N$ we can find a $C^1_X$-hypersurface $M_k$ such that
\begin{equation}\label{eq_abscont}
    |D_Xu|(\J_u^k \setminus M_k)<\frac{1}{k}.
\end{equation}
\end{construction}
Before starting the construction of the approximating sequence we need the following Lemma.
\begin{lem}\label{lem_closedset}
Let $u \in \bv_X(\Omega)$ be such that $\J_u$ is countably $X$-rectifiable and consider the function
\begin{align*}
\jf_u: \J_u  &\to \R \times \R \times \mathbb{S}^{m-1}\\
x &\to (u^+(x),u^-(x),\nu_{\J_u}(x)).
\end{align*}
Then for every $k \in \N$ there exist a compact set $C_k \se \J_u^k \cap M_k$ (where $\J_u^k$ and $M_k$ are defined as in Construction \ref{Con_JeM}) and a representative of $\jf_u$ such that $\jf_u|_{C_k}$ is continuous and
\begin{equation}\label{eq_defck}
|D_X u|((\J_u^k \cap M_k) \setminus C_k) <\frac{1}{k}.
\end{equation}
\end{lem}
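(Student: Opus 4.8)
The plan is to recognize this as a Lusin-type statement for the vector-valued map $\jf_u$ with respect to the finite measure $|D_Xu|\res(\J_u^k\cap M_k)$, and to reduce it to the classical Lusin theorem.

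First I would set $\mu_k\ceq |D_Xu|\res(\J_u^k\cap M_k)$ and observe that $\mu_k$ is a \emph{finite} Radon measure: finiteness is immediate from $u\in\bv_X(\Omega)$, which gives $|D_Xu|(\Omega)<\infty$, while being Radon follows because any finite Borel measure on $\R^n$ is inner regular with respect to compact sets. Moreover $\J_u^k\cap M_k$ is bounded, since $\J_u^k=\J_{u,\eta_k}\subseteq B(0,\eta_k)$ by Construction~\ref{Con_JeM}, so the compact sets produced by inner regularity are genuinely contained in a fixed compact neighbourhood.

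Next I would produce a Borel representative of $\jf_u$ on $\J_u^k\cap M_k$. The first two components $u^+,u^-$ are Borel functions of $x$, being defined through the approximate one-sided limits entering the jump condition~\eqref{eq_xjump}. For the third component I would exploit that $\J_u^k\cap M_k\subseteq M_k$ and that, by \cite[Proposition 2.18]{dv}, the horizontal normals $\nu_{\J_u}$ and $\nu_{M_k}$ agree up to a sign $\hau^{Q-1}$-a.e.\ on the intersection; hence I may \emph{choose} the representative so that $\nu_{\J_u}\equiv\nu_{M_k}$ on $\J_u^k\cap M_k$, swapping the roles of $u^+$ and $u^-$ on the (measurable) set where the two normals are opposite. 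Since $\nu_{M_k}=Xf/|Xf|$ in the local $C^1_X$ charts defining $M_k$, this component is Borel (indeed locally continuous), and with this choice $\jf_u$ becomes a $\mu_k$-measurable map into the metric space $\R\times\R\times\mathbb{S}^{m-1}$.

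Finally, applying the Lusin theorem to $\jf_u$ on the finite Radon measure space $(\J_u^k\cap M_k,\mu_k)$ with $\ve=1/k$ yields a compact set $C_k\subseteq\J_u^k\cap M_k$ such that $\jf_u|_{C_k}$ is continuous and $\mu_k((\J_u^k\cap M_k)\setminus C_k)<1/k$, which is exactly~\eqref{eq_defck}. The only genuinely delicate point is the measurable sign choice for $\nu_{\J_u}$, i.e.\ the selection of the representative of $\jf_u$; everything else is a direct invocation of Lusin. If one prefers to avoid an explicit global sign selection, I would instead apply Lusin to each of the three scalar/vector components separately, obtaining compact sets of small $\mu_k$-deficit, and take their intersection as $C_k$.
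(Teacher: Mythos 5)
Your proposal is correct and takes essentially the same route as the paper: produce a Borel (hence $|D_Xu|$-measurable) representative of $\jf_u$ and then apply Lusin's theorem to the finite Radon measure $|D_Xu|\res(\J_u^k\cap M_k)$ with $\ve=1/k$. The only difference is that the paper obtains the measurable representative --- including the Borel measurability of the one-sided traces $u^\pm$ once the sign of $\nu_{\J_u}$ is fixed, which you assert rather than prove --- by directly citing \cite[Proposition 2.28]{dv}, whereas you sketch the sign-selection argument by hand.
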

\begin{proof} 
By \cite[Proposition 2.28]{dv} we can choose a Borel representative of $\jf_u$ so it suffices to use Lusin's Theorem \cite[Theorem 2.3.5]{federer}.
\end{proof}
Using the previous Lemma we can construct the required approximating sequence. Recall that we want to obtain a sequence of functions $(u_k)_{k \in \N}$ such that $u_k \in \sbv_X(\Omega)$, $u_k \in C^\infty(\Omega \setminus \J_{u_k})$ and $u_k$ converges to $u$ in the $\bv_X$ norm. Fix a representative of $u \in \sbv_X(\Omega)$ and $k \in \N$ and consider the compact set $C_k$ given by Lemma \ref{lem_closedset}. For $\ell \in \N$ we define the sets
\begin{align*}
A^1_k& \ceq \left\lbrace x \in \Omega : d_E(x,C_k)>\frac{1}{2} \right\rbrace,\\
A^\ell_k & \ceq \left\lbrace x \in \Omega: \frac{1}{\ell+1}<d_E(x,C_k) <\frac{1}{\ell-1} \right\rbrace \text{ if }\ell>1.
\end{align*}
We observe that
\[
\bigcup_{\ell \in \N}A^\ell_k=\Omega \setminus C_k
\]
and that, for every $\bar \ell \in \N$,  $A^{\bar \ell}_k$ intersects at most two of the sets of the family $(A_k^i)_i$, namely  $A^{\bar \ell+1}_k$ and\footnote{For convenience we also define $A^{\ell}_k \ceq \emptyset$ if $\ell<1$.} $A^{\bar \ell-1}_k$.
Now for $s \in \N$ we define the bounded open sets
\begin{align*}
A^{\ell,1}_k &\ceq  A_k^\ell \cap \lbrace |x|_{\R^n}<2 \rbrace,\\
A^{\ell,s}_k & \ceq  A_k^\ell \cap \lbrace s-1<|x|_{\R^n}<s+1 \rbrace \text{ if }s>1.
\end{align*}
We observe that 
\[
\bigcup_{s \in \N}A\lsk=A^\ell_k
\]
and that, for every $\bar s \in \N$, $A^{\ell,\bar s}_k$ intersects at most two of the sets of the family $(A^{\ell,i}_k)_i$, namely  $A^{\ell,\bar s+1}_k$ and\footnote{For convenience we also define $A\lsk \ceq \emptyset$ if either $\ell<1$ or $s<1$.} $A^{ \ell,\bar s+1}_k$.
Consider a partition of unity on $\Omega \setminus C_k$ associated with $(A\lsk)_{\ell,s \in \N}$, that is,  functions $\xi\lsk \in C^\infty_c(A\lsk)$ such that $0 \leq \xi\lsk \leq 1$ and $\sum_{\ell,s \in \N}\xi\lsk \equiv 1$ on $\Omega \setminus C_k$. Let us also define 
\begin{equation}\label{eq:Z}
Z\lsk \ceq \left\lbrace x \in \R^n: d_E(x, \spt(\xi\lsk)) \leq  \frac{d_E(\pp A\lsk,\spt(\xi\lsk))}{5}\right\rbrace.
\end{equation}
Notice that $Z\lsk$ is compact and $Z\lsk \subset A\lsk$. Fix a mollification kernel, i.e., a spherically symmetric non-negative function $K \in C^\infty_c(B_E(0,1))$ such that $\int_{\R^n}Kd\leb^n=1$. For $\ve>0$ we define $K_\ve(x)=\ve^{-n}K(x/\ve)$. For $k \in \N$ we finally define
\begin{equation}\label{eq_defuk}
u_k \ceq \sum_{\ell,s \in \N}(\xi\lsk u) * K_\eps \text{ on }\Omega \setminus C_k
\end{equation}
where the $\eps$'s are chosen so small that, for every $1 \leq i \leq m$, $1 \leq h \leq n$, $1 \leq t \leq n$, $h\neq t$, we have\footnote{Notice that conditions \eqref{eq_condeps5} and \eqref{eq_condeps6} can be requested because of Lemma \ref{lem_app2}.}

\begin{align}
  &\eps < \frac{1}{2^{\ell +s}k}, \label{eq_condeps1}\\
&\eps<\frac{d_E(\pp A\lsk,\operatorname{spt}(\xi\lsk))}{10},\label{eq_condeps3}\\
&\nl K_{\eps}*(uX_i \xi\lsk)-uX_i \xi\lsk \nr_{L^1(\Omega)}<\frac{1}{2^{\ell +s}k} ,\label{eq_condeps4}\\
&\nl \left(u\xi\lsk\frac{\pp a_{i,t}}{\pp x_t}\right)*W^t_{\eps} \nr_{L^1(\Omega)} < \frac{1}{n2^{\ell +s}k},\label{eq_condeps5}\\
&\nl \left(u \xi\lsk \frac{\pp a_{i,t}}{\pp x_h}\right)*W^{t,h}_{\eps}\nr_{L^1(\Omega)}< \frac{1}{n^22^{\ell +s}k},\label{eq_condeps6}\\
&\nl (\xi\lsk u)*K_{\eps}-\xi\lsk u \nr_{L^1(\Omega)}<\frac{1}{2^{\ell +s}k},\label{eq_condeps7}\\
&\nl (\xi\lsk D_{X_i}^{ap}u)*K_{\eps}-\xi\lsk D^{ap}_{X_i}u\nr_{L^1(\Omega)} < \frac{1}{2^{\ell +s}k},\label{eq_condeps8}\\
&\eps < \frac{1}{100 (\ell+1)(\ell+2)},\label{eq_condeps9}\\
&\eps < \frac{1}{C\nl \nabla K \nr_{L^\infty} \nl  u \nr_{L^1(Z\lsk)} 2^{\ell+s}k}\label{eq_condeps10}.
\end{align}
The number $C>0$ appearing in \eqref{eq_condeps10} is a constant that will be chosen in Proposition \ref{pro_estRS} below, and  $W_{\eps}^{t}$ and $W_{\eps}^{t,h}$ appearing in \eqref{eq_condeps5} and \eqref{eq_condeps6} are defined as 
\begin{align}\label{eq_defW}
W_{\eps}^{t}(x)& \ceq \left( K_{\eps}(x)+x_t\frac{\pp K_{\eps}}{\pp x_t}(x)\right),\\
W_{\eps}^{t,h}(x)& \ceq x_h \frac{\pp K_{\eps}}{\pp x_t}(x),
\end{align} 
being $x_v$ the $v$-th component of $x$, $v=1,\dots, n$.
\begin{rem}\label{rem_pallacontenuta}
Fix $\ell,s,k \in \N$. Then for any $x \in A\lsk$ we have, using condition \eqref{eq_condeps9}, that
\[
B_E(x,\eps) \se \bigcup_{\substack{ \ell-1\leq \alpha \leq  \ell+1\\s-1 \leq \beta \leq s+1 } }A^{\alpha,\beta}_k
\]
Let us also observe that, thanks to condition \eqref{eq_condeps3}, we have that $\spt[(\xi\lsk u) *K_\eps] \se A\lsk$, the latter implying that  the sum in \eqref{eq_defuk} is locally finite, hence $u_k \in C^\infty(\Omega \setminus C_k)$. Moreover, $u_k$ is defined out of a $\leb^n$-negligible set $C_k$ and, from \eqref{eq_condeps7}, $u_k \in L^1(\Omega)$ and $\nl u_k-u \nr_{L^1(\Omega)}\xrightarrow{k \to +\infty}0$. Later, using Lemma \ref{lem_tra}  and Proposition \ref{pro_estRS}, we will prove in Proposition \ref{pro_bvuk} that $u_k \in \sbv_X(\Omega)$.
\end{rem}
\begin{lem}\label{lem_giusti}
Let $u,u_k$ and $C_k$ be defined as in Lemma~\ref{lem_closedset}. Then for every $M>0$ and every $y \in C_k$ one has
\[
\lim_{r \to 0}r^{-M} \int_{B_E(y,r)\cap \Omega }|u_k-u|d\leb^n=0
\]
\end{lem}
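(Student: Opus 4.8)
The plan is to exploit the super-exponential smallness of the mollification parameters $\eps$ together with the geometric fact that, near a point of $C_k$, only ``high-index'' layers of the partition of unity can contribute. Since $\sum_{\ell,s}\xi\lsk\equiv 1$ on $\Omega\setminus C_k$ and $C_k$ is $\leb^n$-negligible (Remark~\ref{rem_pallacontenuta}), I would first write $u=\sum_{\ell,s}\xi\lsk u$ $\leb^n$-a.e.\ on $\Omega$, so that, setting $g\lsk\ceq(\xi\lsk u)*K_\eps-\xi\lsk u$,
\[
u_k-u=\sum_{\ell,s\in\N}g\lsk\qquad\text{$\leb^n$-a.e.\ on }\Omega,
\]
and it suffices to estimate $\sum_{\ell,s}\int_{B_E(y,r)\cap\Omega}|g\lsk|\,d\leb^n$.

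The key step is a localization argument identifying which indices $(\ell,s)$ produce a term $g\lsk$ whose support meets $B_E(y,r)$. Since $\spt g\lsk$ is contained in the closed $\eps$-neighbourhood of $\spt\xi\lsk\se A\lsk\se A^\ell_k$, any point of $\spt g\lsk\cap B_E(y,r)$ lies within distance $\eps+r$ of $C_k$ (recall $y\in C_k$); as points of $A^\ell_k$ satisfy $d_E(\cdot,C_k)>\frac{1}{\ell+1}$, this forces $\frac{1}{\ell+1}<\eps+r$. Using \eqref{eq_condeps9} to absorb $\eps<\frac{1}{100(\ell+1)}$, one obtains $\frac{99}{100(\ell+1)}<r$, hence $\ell\ge\ell_0(r)$ for some $\ell_0(r)$ comparable to $1/r$; in particular $\ell_0(r)\to+\infty$ as $r\to 0$.

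Finally, by \eqref{eq_condeps7} every surviving term obeys $\int_{B_E(y,r)\cap\Omega}|g\lsk|\,d\leb^n\le\|g\lsk\|_{L^1(\Omega)}<2^{-(\ell+s)}/k$, so summing the doubly-geometric series over $\ell\ge\ell_0(r)$ and $s\ge 1$ yields
\[
\int_{B_E(y,r)\cap\Omega}|u_k-u|\,d\leb^n\le\frac{1}{k}\Big(\sum_{\ell\ge\ell_0(r)}2^{-\ell}\Big)\Big(\sum_{s\ge1}2^{-s}\Big)\lesssim\frac{1}{k}\,2^{-\ell_0(r)}.
\]
Since $\ell_0(r)\sim c/r$, the right-hand side decays like $2^{-c/r}=e^{-(c\ln 2)/r}$, which beats any power $r^{M}$; multiplying by $r^{-M}$ and letting $r\to 0$ gives the claim. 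I expect the only delicate point to be the geometric localization, namely checking quantitatively that the layers reaching into $B_E(y,r)$ have index $\ell\gtrsim 1/r$ (this is where the distance-to-$C_k$ structure of the $A\lsk$ and the bound \eqref{eq_condeps9} are genuinely used); the remainder is a routine summation.
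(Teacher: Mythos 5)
Your proof is correct and is essentially the paper's own argument: the same decomposition $u_k-u=\sum_{\ell,s}\big[(\xi\lsk u)*K_\eps-\xi\lsk u\big]$, the same observation that only layers with index $\ell\gtrsim 1/r$ can meet $B_E(y,r)$, and the same summation of \eqref{eq_condeps7} giving a bound of order $2^{-c/r}/k$, which beats every power $r^M$. The only cosmetic difference is in the localization step: the paper uses \eqref{eq_condeps3} to conclude directly that $\spt\big[(\xi\lsk u)*K_\eps\big]\se A\lsk$ (so that $\ell\geq[1/r]$), whereas you absorb the extra $\eps$-enlargement of the support via \eqref{eq_condeps9}; both yield a cutoff index comparable to $1/r$, which is all that is needed.
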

\begin{proof}
Fix $r>0$ and $y \in C_k$. From the fact that $\xi\lsk \in C^\infty_c(A\lsk)$ and \eqref{eq_condeps3} we have that
\[
u_k(x)-u(x)=\sum_{s \in \N}\sum_{\ell=\ell_0}^\infty\bigg( (\xi\lsk u) * K_\eps (x)-\xi\lsk u (x) \bigg), \quad \forall x \in B_E(y,r) \cap \Omega 
\]
where $\ell_0 \in \N$ is defined as $\ell_0 \ceq [1/r]$ and $[\cdot]$ denotes the floor function. From \eqref{eq_condeps7} we obtain
\[
\nl u_k-u \nr_{L^1(B_E(y,r)\cap \Omega )}\leq \sum_{s \in \N}\sum_{\ell=\ell_0}^\infty \frac{1}{2^{\ell+s}k}.
\]
From the definition of $\ell_0$ and the fact that for every $M>0$ one has $\lim_{r \to 0^+}r^{-M}2^{-1/r}=0$ we obtain the thesis.
\end{proof}
\begin{lem}\label{lem_giusticc}
Let $u,u_k$ and $C_k$ be defined as in Lemma~\ref{lem_closedset}. Then for every $M>0$ and for every $y \in C_k$ one has
\[
\lim_{r \to 0}r^{-M} \int_{B(y,r)\cap \Omega}|u_k-u|d\leb^n=0
\]
\end{lem}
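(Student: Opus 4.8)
The plan is to deduce Lemma~\ref{lem_giusticc} directly from its Euclidean counterpart, Lemma~\ref{lem_giusti}, by exploiting the local comparison between the Carnot-Carathéodory distance $d$ and the Euclidean distance $d_E$. The key fact I would invoke is that, since the vector fields $X_1,\dots,X_m$ are smooth (hence bounded on compact sets) and the point $y\in C_k$ stays in the compact set $C_k$, there exist a constant $C>0$ and a radius $r_0>0$ such that $d_E(x,y)\le C\,d(x,y)$ whenever $d(x,y)<r_0$. Indeed, any $X$-subunit path $\gamma$ has Euclidean speed $|\gamma'(t)|=|\sum_j h_j(t)X_j(\gamma(t))|\le (\sum_j|X_j(\gamma(t))|^2)^{1/2}$, which is bounded by some $L$ on a fixed compact neighbourhood of $y$; hence the Euclidean displacement along a near-optimal path is controlled by its $X$-subunit length, giving $d_E\le L\,d$ for $r$ small. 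Equivalently, this yields the inclusion $B(y,r)\subseteq B_E(y,Cr)$ for every $r\in(0,r_0)$.

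Granting this inclusion, the estimate is immediate. First I would bound
\[
\int_{B(y,r)\cap\Omega}|u_k-u|\,d\leb^n \;\le\; \int_{B_E(y,Cr)\cap\Omega}|u_k-u|\,d\leb^n,
\]
and then rescale, using $(Cr)^{-M}=C^{-M}r^{-M}$, to get
\[
r^{-M}\int_{B(y,r)\cap\Omega}|u_k-u|\,d\leb^n
\;\le\; C^M\,(Cr)^{-M}\int_{B_E(y,Cr)\cap\Omega}|u_k-u|\,d\leb^n.
\]
Since $Cr\to0$ as $r\to0$ and $C^M$ is a fixed constant, applying Lemma~\ref{lem_giusti} with the \emph{same} exponent $M$ (and radius $Cr$ in place of $r$) forces the right-hand side to vanish in the limit, which is exactly the claim.

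The only point requiring any care, and hence the main (mild) obstacle, is the uniform justification of the one-sided comparison $d_E\le C\,d$ for $y$ ranging in the compact set $C_k$; this is the single nontrivial ingredient, and the reverse inequality $d\le C'\,d_E^{1/s}$ is \emph{not} needed here. Alternatively, one could repeat the argument of Lemma~\ref{lem_giusti} verbatim with $B(y,r)$ replacing $B_E(y,r)$, observing that $B(y,r)$ still meets only those $A\lsk$ with $\ell\ge\ell_0$ for a suitable $\ell_0\to\infty$ as $r\to0$, and then invoking \eqref{eq_condeps7} together with $\lim_{r\to0^+}r^{-M}2^{-1/r}=0$; but I expect the reduction to the already-proven Euclidean statement to be the cleaner route.
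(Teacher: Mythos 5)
Your proposal is correct and takes essentially the same route as the paper: the paper also reduces Lemma~\ref{lem_giusticc} to Lemma~\ref{lem_giusti} via the local one-sided comparison $d_E(p,q)\leq C\,d(p,q)$ (for which it simply cites \cite{nsw}) and the resulting inclusion $B(y,r)\subseteq B_E(y,Cr)$ for small $r$. The only differences are cosmetic: you sketch a direct subunit-path justification of the distance comparison and track the harmless factor $C^M$ explicitly, while the paper leaves both to the reference and to the reader.
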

\begin{proof}
For any $p,q \in \overline{(B_E(y,1))}$ one has $d_E(p,q)\leq Cd(p,q)$ where $C\geq 1$ is a constant only depending on the vector fields $X_i$'s and on a compact set $K\supset\supset(B_E(y,1))$ (see \cite{nsw}).  Hence for any sufficiently small $r>0$, we have $B(y,r)\se B_E(y,Cr)$. The latter implies that for any 
 such $r>0$ we have 
\[
r^{-M} \int_{B(y,r)\cap \Omega}|u_k-u|d\leb^n \leq r^{-M}\int_{B_E(y,Cr)\cap \Omega }|u_k-u|d\leb^n.
\]
The result then follows after letting $r \to 0$ and using Lemma \ref{lem_giusti}.
\end{proof}
\begin{lem}\label{lem_tra}
Let $u,u_k$ and $C_k$ be defined as in Lemma~\ref{lem_closedset}. Then for every $y \in C_k$ 
\[
\lim_{r \to 0}\fint_{B^+_{\nu_{\J_u}(y)}(y,r)}|u_k(x)-u^+(y)|dx=0,\qquad \lim_{r \to 0}\fint_{B^-_{\nu_{\J_u}(y)}(y,r)}|u_k(x)-u^-(y)|dx=0.
\]
\end{lem}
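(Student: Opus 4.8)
The plan is to transfer the jump behaviour of $u$ at $y$ to the approximant $u_k$, exploiting the superpolynomial closeness of $u_k$ to $u$ near $C_k$ furnished by Lemma~\ref{lem_giusticc}. Fix $y \in C_k$ and set $\nu := \nu_{\J_u}(y)$. Since $C_k \se \J_u^k \cap M_k \se \J_u$, the point $y$ is an approximate $X$-jump point of $u$ with $\jf_u(y)=(u^+(y),u^-(y),\nu)$, so Definition~\ref{def_approxjump} gives
\[
\lim_{r \to 0}\fint_{B^+_{\nu}(y,r)}|u-u^+(y)|\,d\leb^n=0, \qquad \lim_{r \to 0}\fint_{B^-_{\nu}(y,r)}|u-u^-(y)|\,d\leb^n=0.
\]
By the triangle inequality it then suffices to prove that $\fint_{B^\pm_\nu(y,r)}|u_k-u|\,d\leb^n \to 0$ as $r\to 0$; I will treat $B^+$, the argument for $B^-$ being identical.

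Since $B^+_\nu(y,r)\se B(y,r)$ and $C_k$ is $\leb^n$-negligible (being contained in $M_k$), for $r$ small enough that $B(y,r)\se\Omega$ one has
\[
\fint_{B^+_\nu(y,r)}|u_k-u|\,d\leb^n
\le \frac{1}{\leb^n(B^+_\nu(y,r))}\int_{B(y,r)\cap\Omega}|u_k-u|\,d\leb^n .
\]
The numerator is controlled by Lemma~\ref{lem_giusticc}, which states precisely that $\int_{B(y,r)\cap\Omega}|u_k-u|\,d\leb^n=o(r^M)$ for every $M>0$. Hence the whole statement reduces to a lower volume bound for the half-ball, of the form $\leb^n(B^+_\nu(y,r))\ge c\,r^Q$ for some $c=c(y)>0$ and all small $r$: granting this and choosing $M=Q+1$ yields $\fint_{B^+_\nu(y,r)}|u_k-u|\,d\leb^n = o(r)\to 0$, and the same for $B^-$.

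I expect the lower bound on the half-ball volume to be the main obstacle. My approach is to fit a full CC-ball of radius comparable to $r$ inside $B^+_\nu(y,r)$. Let $f\in C^1_X(B(y,R))$ be a defining function with $f(y)=0$ and $Xf(y)/|Xf(y)|=\nu$, so that $\{f>0\}$ generates $B^+_\nu$. Let $\gamma$ be the integral curve issuing from $y$ of the subunit horizontal field $\sum_i \nu_i X_i$ (subunit since $\nu\in\mathbb S^{m-1}$); because $\tfrac{d}{dt}f(\gamma(t))\big|_{t=0}=\langle Xf(y),\nu\rangle=|Xf(y)|>0$, for small $t>0$ we have $f(\gamma(t))\ge c_1 t>0$ and $d(y,\gamma(t))\le t$. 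Being of class $C^1_X$, $f$ is locally CC-Lipschitz, so $|f(z)-f(\gamma(t))|\le L\,d(z,\gamma(t))$ near $y$; thus $B(\gamma(t),\rho)\se\{f>0\}$ whenever $L\rho< c_1 t$, and choosing $\rho=c_2 t$ together with $t$ a small fixed multiple of $r$ ensures $B(\gamma(t),\rho)\se B^+_\nu(y,r)$. Finally, the local $Q$-Ahlfors regularity of $\leb^n$ with respect to the CC-balls (Nagel--Stein--Wainger, \cite{nsw}) gives $\leb^n(B(\gamma(t),\rho))\ge c\,\rho^Q\ge c'\,r^Q$, which is exactly the bound required above. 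Running the curve along $-\nu$ produces the analogous estimate for $B^-_\nu(y,r)$, completing the proof.
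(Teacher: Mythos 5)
Your proposal is correct, and its skeleton coincides with the paper's own proof: both arguments reduce the claim, via the triangle inequality and the approximate $X$-jump condition of $u$ at $y \in C_k \se \J_u$, to showing that $\fint_{B^\pm_\nu(y,r)}|u_k-u|\,d\leb^n \to 0$, and both obtain this by combining the superpolynomial estimate of Lemma~\ref{lem_giusticc} with a lower bound of order $r^Q$ for $\leb^n(B^\pm_\nu(y,r))$. The only genuine difference is how that volume bound is produced. The paper gets it by quoting \cite[Proposition 2.1.5]{dontesi}, which gives the sharper asymptotics $\leb^n(B(y,r))/\leb^n(B^\pm_\nu(y,r))\to 2$, together with the estimate $\leb^n(B(y,r))\geq Cr^Q$ from \cite{nsw}; you instead prove directly the weaker (but sufficient) bound $\leb^n(B^+_\nu(y,r))\geq c\,r^Q$ by fitting a full CC-ball $B(\gamma(t),c_2 t)$, with $t$ comparable to $r$, inside $\{f>0\}\cap B(y,r)$ along the integral curve $\gamma$ of $\sum_i \nu_i X_i$, and then invoking the same NSW Ahlfors-type estimate on that smaller ball. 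This is a legitimate alternative that makes the lemma independent of the thesis reference \cite{dontesi}. Its cost is that it silently uses two nontrivial (though standard) facts about $C^1_X$ functions: first, that $t\mapsto f(\gamma(t))$ is differentiable with derivative $\langle Xf(\gamma(t)),\nu\rangle$ (equivalently, the fundamental theorem of calculus along integral curves of horizontal vector fields), which is what yields $f(\gamma(t))\geq c_1 t$; second, that $C^1_X$ functions are locally Lipschitz with respect to the CC distance, which is what keeps $B(\gamma(t),c_2 t)$ inside $\{f>0\}$. Both facts can be proved by Friedrichs-type mollification and are available in the literature (see e.g.~\cite{fssc,garofalonhieu,dv}), so these are citation gaps rather than mathematical ones; with those references supplied, your argument is complete.
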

\begin{proof}
We will prove only that $\lim_{r \to 0}\fint_{B^+_{\nu_{\J_u}(y)}(y,r)}|u_k(x)-u^+(y)|dx=0$, the other limit is analogous. For the sake of brevity we write $B^+_r \ceq B^+_{\nu_{\J_u}(y)}(y,r)$ and $u^+\ceq u^+(y)$. Since
\begin{align*}
&\fint_{B^+_r} | u_k(x)-u^+|dx \leq \fint_{B^+_r} | u_k(x)-u(x)|dx+\fint_{B^+_r}|u(x)-u^+|dx,\\
\end{align*}
and $\fint_{B^+_r}|u(x)-u^+|dx \xrightarrow{r \to 0}0$ , it suffices to prove that
\[
\lim_{r \to 0 }\fint_{B^+_r}|u_k(x)-u(x)|dx=0.
\]
We observe that 
\begin{align*}
\fint_{B^+_r}|u_k(x)-u(x)|dx&= \frac{1}{\leb^n(B^+_r)}\int_{B^+_r}|u_k(x)-u(x)|dx \leq \frac{1}{\leb^n(B^+_r)}\int_{B(p,r) \cap \Omega} |u_k(x)-u(x)|dx \\
&= \frac{\leb^n(B(y,r))}{\leb^n(B^+_r)}\frac{1}{\leb^n(B(y,r))}\int_{B(p,r) \cap \Omega }|u_k(x)-u(x)|dx.
\end{align*}
From \cite[Proposition 2.1.5]{dontesi} we have $\frac{\leb^n(B(p,r))}{\leb^n(B^+_r)}\xrightarrow{r \to 0}2$ so the result follows upon letting $r \to 0$, using Lemma \ref{lem_giusticc} and the fact that there exists a positive constant $C>0$ such that $\leb^n(B(y,r)) \geq Cr^Q$ (see for instance \cite[Theorem 1]{nsw}).
\end{proof}

\subsection{Estimates on the total variation}
Fix $k \in \N$, $i \in \lbrace 1,\dots,m\rbrace$ and $y \in \Omega \setminus C_k$. By Proposition \ref{pro_aux1} we have
\begin{align*}
(X_iu_k)(y)&=X_i\left( \sum_{\ell,s \in \N} (\xi\lsk u)*K_{\eps}\right)(y)=\sum_{\ell,s \in \N} X_i[(\xi\lsk u)*K_{\eps}](y)=\\
&=\sum_{\ell,s \in \N} \left[(D_{X_i}(\xi\lsk u)*K_{\eps})(y)+R_i(\xi\lsk u,K_{\eps};y)\right]=\\
&=\sum_{\ell,s \in \N} \bigg\lbrace\bigg[\bigg( uX_i \xi\lsk\leb^n+\xi\lsk D_{X_i}u \bigg)*K_{\eps}\bigg](y)+R_i(\xi\lsk u,K_{\eps};y)\bigg\rbrace=\\
&=\sum_{\ell,s \in \N} \Big[ (\xi \lsk D_{X_i}u)*K_{\eps}(y)+\!\int_{\R^n} \!\!K_{\eps}(y-x)u(x)(X_i\xi\lsk)(x)dx+R_i(\xi\lsk u,K_{\eps};y) \Big].
\end{align*}
For the sake of brevity let us define
\begin{equation}\label{eq:ReS}
S\lski (y) \ceq \int_{\R^n} K_{\eps}(y-x)u(x)(X_i\xi\lsk)(x)dx, \quad R\lski(y) \ceq R_i(\xi\lsk u,K_{\eps};y) 
\end{equation}
so that 
\begin{equation}\label{eq:X_iu_k}
(X_iu_k)(y)=\sum_{\ell,s \in \N} \bigg( (\xi \lsk D_{X_i}u)*K_{\eps}(y)+
    S\lski (y)
+R\lski (y) \bigg).
\end{equation}
Now we want to estimate the $L^1$-norm of the two remainders $R\lski$ and $S\lski$: part of the following Proposition is a rewriting of \cite[Lemma 2.1.1]{fssc} in a language more useful for our purposes.
\begin{pro}\label{pro_estRS}
If $S\lski$ and $R\lski$ are defined as above (see \eqref{eq:R} and \eqref{eq:ReS}), then for any $i=1,\dots, m$
\[
   \nl \sum_{\ell,s \in \N} S\lski \nr_{L^1(\Omega)} \leq \frac{1}{k} \quad \text{ and } \quad  \nl \sum_{\ell,s \in \N} R\lski \nr_{L^1(\Omega)} \leq \frac{3}{k}.
\]
\end{pro}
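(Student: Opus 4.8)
The plan is to estimate the two remainders $S\lski$ and $R\lski$ separately, exploiting the bounded-overlap structure of the families $(A\lsk)$ and the smallness conditions \eqref{eq_condeps1}--\eqref{eq_condeps10} imposed on each $\eps$. Throughout I will use the key combinatorial fact recorded in Remark~\ref{rem_pallacontenuta}: for fixed $(\ell,s)$ and $x\in A\lsk$, the ball $B_E(x,\eps)$ meets only the nine sets $A^{\alpha,\beta}_k$ with $|\alpha-\ell|\le 1$, $|\beta-s|\le 1$, so the convolutions have finite overlap and the $L^1$ norm of the sum is controlled by the sum of the $L^1$ norms (up to a universal multiplicative constant absorbed into the choice of $C$).

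For the term $S\lski$, the plan is to rewrite $S\lski(y)=\big(K_\eps * (u X_i\xi\lsk)\big)(y)$, so that by Proposition~\ref{pro_aux1}(i) it differs from $u X_i\xi\lsk$ by exactly the quantity controlled in condition \eqref{eq_condeps4}. The crucial cancellation is that, since $\sum_{\ell,s}\xi\lsk\equiv 1$ on $\Omega\setminus C_k$, one has $\sum_{\ell,s}X_i\xi\lsk\equiv 0$ there; hence $\sum_{\ell,s} u X_i\xi\lsk=0$. Therefore
\[
\Big\|\sum_{\ell,s}S\lski\Big\|_{L^1(\Omega)}
=\Big\|\sum_{\ell,s}\big(K_\eps*(uX_i\xi\lsk)-uX_i\xi\lsk\big)\Big\|_{L^1(\Omega)}
\le\sum_{\ell,s}\frac{1}{2^{\ell+s}k}\le\frac1k,
\]
using \eqref{eq_condeps4} and $\sum_{\ell,s}2^{-(\ell+s)}=1$. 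This gives the first estimate cleanly.

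The harder part is the bound on $R\lski$, and I expect this to be the main obstacle. Here I would unwind the definition \eqref{eq:R} of $R_i(\xi\lsk u,K_\eps;\cdot)$, which contains a divergence term $(\div X_i)(x)K_\eps(y-x)$ and a difference term $\langle X_i(y)-X_i(x),\nabla K_\eps(y-x)\rangle$. The divergence piece is handled by writing it against the kernel $W^t_\eps$ from \eqref{eq_defW} (note $\int W^t_\eps=0$ after integrating by parts, since $K_\eps$ is compactly supported) and invoking \eqref{eq_condeps5} componentwise over $t=1,\dots,n$. The difference piece is the delicate one: I would Taylor-expand $X_i(y)-X_i(x)$ to first order, so that the leading contribution is a sum over $t,h$ of terms $\frac{\pp a_{i,t}}{\pp x_h}(x)(y_h-x_h)\frac{\pp K_\eps}{\pp x_t}(y-x)$, which are exactly the kernels $W^{t,h}_\eps$ from \eqref{eq_defW}; these are controlled by \eqref{eq_condeps6} (again the mean-zero property lets Lemma~\ref{lem_app2} apply, which is why those conditions are admissible). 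The second-order Taylor remainder in $X_i$ produces a factor of order $\eps$ against $\|\nabla K_\eps\|$, and this is precisely the error absorbed by condition \eqref{eq_condeps10}, whose constant $C$ is fixed here so that this residual contributes at most $1/(2^{\ell+s}k)$ on each patch. Summing the three pieces (divergence, first-order, second-order remainder) over all $(\ell,s)$ using finite overlap yields the bound $3/k$, one unit for each of the three sources of error.

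I would organise the write-up by treating $S\lski$ first as the easy cancellation argument, then splitting $R\lski$ into its divergence and difference parts, and within the difference part isolating the first-order term (matched to $W^{t,h}_\eps$) from the second-order remainder (absorbed by \eqref{eq_condeps10}). The main technical care is the bookkeeping of the mean-zero kernels so that Lemma~\ref{lem_app2} legitimately certifies the admissibility of conditions \eqref{eq_condeps5} and \eqref{eq_condeps6}, and the precise choice of $C$ in \eqref{eq_condeps10} to close the second-order estimate.
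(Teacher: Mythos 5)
Your proposal is correct and follows essentially the same route as the paper's proof: the identical cancellation $\sum_{\ell,s}X_i\xi\lsk\equiv 0$ on $\Omega\setminus C_k$ combined with \eqref{eq_condeps4} for the $S$-term, and for the $R$-term the same first-order Taylor expansion of the coefficients $a_{i,t}$, with the three error sources matched to \eqref{eq_condeps5}, \eqref{eq_condeps6} and \eqref{eq_condeps10}, each contributing $1/k$. The only point to state precisely in a write-up is the grouping: the divergence term cannot be estimated by itself (the kernel $K_\eps$ is not mean-zero) but must be merged with the diagonal ($t=h$) first-order Taylor terms to form the mean-zero kernel $W^t_\eps$, while \eqref{eq_condeps6} and $W^{t,h}_\eps$ cover only $t\neq h$ --- your parenthetical observation that $\int W^t_\eps=0$ after integration by parts shows this is exactly the grouping you have in mind.
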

\begin{proof} We start by estimating $S\lski$. Fix $y\in \Omega \setminus C_k$: since $\sum_{\ell,s \in \N}X_i\xi\lsk \equiv 0$, then
\begin{align*}
\sum_{\ell,s \in \N} S\lski (y) &= \sum_{\ell,s \in \N}K_{\eps}*(uX_i\xi\lsk)(y)\\
&=\sum_{\ell,s \in \N}K_{\eps}*(uX_i\xi\lsk)(y)-\sum_{\ell,s \in \N}(uX_i\xi\lsk)(y)\\
&=\sum_{\ell,s \in \N} \bigg(K_{\eps}*(uX_i\xi\lsk)-uX_i\xi\lsk\bigg)(y).
\end{align*}
Using \eqref{eq_condeps4} and the fact that $\leb^n(C_k)=0$, we immediately obtain $\nl \sum_{\ell,s \in \N} S\lski \nr_{L^1(\Omega)} \leq 1/k$. To estimate $R\lski$ we start by observing that for every $y \in \Omega \setminus C_k$ one has
\begin{align}\label{eq:Respanso}
R\lski(y)&=\int_\Omega \xi\lsk(x)u(x)  \left((\div X_i)(x)K_{\eps}(x-y)-\langle X_i(y)-X_i(x), \nabla K_{\eps}(x-y)\rangle\right)dx \\
&=\sum_{t=1}^n \int_\Omega \xi\lsk(x)u(x) \bigg( \frac{\pp a_{i,t}}{\pp x_t}(x)K_{\eps}(x-y)-(a_{i,t}(y)-a_{i,t}(x))\frac{\pp K_{\eps}}{\pp x_t}(x-y) \bigg)dx
\end{align}
Now, for every $y,x \in Z\lsk$ (see \eqref{eq:Z}) we can expand each $a_{i,t}$ with a Taylor's expansion
\begin{equation}\label{eq_defT}
a_{i,t}(y)-a_{i,t}(x)=\sum_{h=1}^n \frac{\pp a_{i,t}}{\pp x_h}(x)(y-x)_h+T_{i,t}(y,x)
\end{equation}
where $(y-x)_h$ denotes the $h$-component of the vector $y-x\in \mathbb R^n$ and 
\begin{equation}\label{eq_stimaT}
|T_{i,t}(y,x)|\leq C^{k,\ell,s}_{i,t} |y-x|^2
\end{equation}
where $C^{k,\ell,s}_{i,t}$ is a positive constant only depending on the $L^\infty$-norm of the second derivatives of $a_{i,t}$ on $Z\lsk$. Replacing \eqref{eq_defT} into \eqref{eq:Respanso} we obtain:
\begin{align*}
R\lski (y)=&\sum_{t=1}^n \int_\Omega \xi\lsk (x)u(x) \frac{\pp a_{i,t}}{\pp x_t}(x)K_\eps(x-y)dx&
\\&-\sum_{t,h=1}^n \int_\Omega \xi\lsk(x)u(x) \frac{\pp a_{i,t}}{\pp x_h}(x)(y-x)_h\frac{\pp K_\eps}{\pp x_t}(x-y)dx\\
&-\sum_{t=1}^n \int_\Omega \xi\lsk(x)u(x) T_{i,t}(y,x)\frac{\pp K_\eps}{\pp x_t}(x-y)dx\\
=& \sum_{t=1}^n \int_\Omega\xi\lsk(x)u(x)  \frac{\pp a_{i,t}}{\pp x_t}(x) \left( K_\eps(x-y)-(y-x)_t\frac{\pp K_\eps}{\pp x_t}(x-y)\right)dx\\
&- \sum_{t,h=1,t \neq h}^n \int_\Omega\xi\lsk (x)u(x) \frac{\pp a_{i,t}}{\pp x_h}(x)(y-x)_h\frac{\pp K_{\eps}}{\pp x_t}(x-y)dx \\
&-\sum_{t=1}^n \int_\Omega \xi\lsk(x)u(x) T_{i,t}(y,x)\frac{\pp K_\eps}{\pp x_t}(x-y)dx.
\end{align*}
Recall that $K$ (and therefore $K_\eps$) is spherically symmetric so that we can write
\begin{align*}
R\lski (y)=&\sum_{t=1}^n \left(u\xi\lsk\frac{\pp a_{i,t}}{\pp x_t}\right)*W^t_{\eps}(y)+\sum_{t,h=1,t \neq h}^n \left(u \xi\lsk \frac{\pp a_{i,t}}{\pp x_h}\right)*W^{t,h}_{\eps}(y)\\
&-\sum_{t=1}^n \int_\Omega \xi \lsk (x)u(x) T_{i,t}(y,x)\frac{\pp K_{\eps}}{\pp x_t}(x-y)dx
\end{align*}
where $W^t_\eps$ and $W^{t,h}_\eps$ are defined as in \eqref{eq_defW}. Then we have
\begin{align*}
\nl  \sum_{\ell,s \in \N} R\lski \nr_{L^1(\Omega)} \leq& \underbrace{\sum_{\substack{\ell,s \in \N\\ \phantom{t\neq h} }}\sum_{t=1}^n \nl \left(u\xi \lsk \frac{\pp a_{i,t}}{\pp x_t}\right)*W^t_{\eps} \nr_{L^1(\Omega)}}_{(A)} +\underbrace{\sum_{\ell,s \in \N}\:\sum_{\substack{t,h=1\\ t \neq h}}^n\nl \left(u \xi\lsk \frac{\pp a_{i,t}}{\pp x_h}\right)*W^{t,h}_{\eps}\nr_{L^1(\Omega)}}_{(B)}  \\
&+\underbrace{\sum_{\ell,s \in \N}\sum_{t=1}^n \int_{Z\lsk} \int_{B_E(x,\eps)}\left|\xi\lsk(x)u(x) T_{i,t}(y,x)\frac{\pp K_{\eps}}{\pp x_t}(x-y)\right|dydx}_{(C)}
\end{align*}
From \eqref{eq_condeps5} and \eqref{eq_condeps6} we obtain that $(A)\leq 1/k$ and $(B)\leq 1/k$. Concerning $(C)$ we have, using \eqref{eq_stimaT} and the fact that $\left|\frac{\pp K_{\eps}}{\pp x_t}\right|<\nl \nabla K \nr_{L^\infty}{(\eps)}^{-n-1}$ and $|x-y|<\eps$, that
\begin{align*}
(C) &\leq \nl \nabla K \nr_{L^\infty} \sum_{\ell,s \in \N}\sum_{t=1}^n \int_{Z\lsk}\int_{B_E(x,\eps)}{(\eps)}^{-n-1} |\xi\lsk(x)u(x)||T_{i,t}(y,x)|dydx\\
& \leq  \nl \nabla K \nr_{L^\infty} \sum_{\ell,s \in \N}\sum_{t=1}^n \int_{Z\lsk}{(\eps)}^{-n-1} |\xi\lsk(x)u(x)|C^{k,\ell,s}_{i,t}\int_{B_E(x,\eps)}|y-x|^2 dydx\\
& \leq \nl \nabla K \nr_{L^\infty}     \sum_{\ell,s \in \N}\sum_{t=1}^n C^{k,\ell,s}_{i,t} \eps \int_{Z\lsk} |\xi\lsk(x)u(x)|dx\\
\end{align*}
and using \eqref{eq_condeps10} with the specific choice of 
\[
C\coloneqq\sum_{t=1}^nC^{k,\ell,s}_{i,t},
\]
we obtain $(C)\leq 1/k$, concluding the proof.
\end{proof}

\begin{pro}\label{pro_bvuk}
For every $k\in \mathbb N$ the function $u_k$ defined  in \eqref{eq_defuk} satisfies the following properties:
\begin{enumerate}
    \item[(i)] $u_k \in \bv_X(\Omega \setminus C_k)$,
    \item[(ii)] $u_k \in \bv_X(\Omega)$,
    \item[(iii)] $u_k \in \sbv_X(\Omega)$.
\end{enumerate}
\end{pro}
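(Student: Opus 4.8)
The plan is to prove the three assertions in order, exploiting throughout that $u_k\in C^\infty(\Omega\setminus C_k)\cap L^1(\Omega)$ (Remark~\ref{rem_pallacontenuta}) and that $C_k$ is a compact, $\leb^n$-negligible subset of the single $C^1_X$-hypersurface $M_k$. For \emph{(i)}, since $u_k$ is smooth on $\Omega\setminus C_k$ it suffices to check $X_iu_k\in L^1(\Omega\setminus C_k)$ for each $i$. I would start from the pointwise identity~\eqref{eq:X_iu_k} and estimate the three summands separately: the remainders $\sum_{\ell,s}S\lski$ and $\sum_{\ell,s}R\lski$ are controlled in $L^1(\Omega)$ by Proposition~\ref{pro_estRS}, while for the main term I would use that convolution against the probability kernel $K_\eps$ is an $L^1$-contraction on measures, so that $\nl(\xi\lsk D_{X_i}u)*K_\eps\nr_{L^1(\Omega)}\le|\xi\lsk D_{X_i}u|(\Omega)=\int\xi\lsk\,d|D_{X_i}u|$. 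Summing over $\ell,s$ and using $0\le\sum_{\ell,s}\xi\lsk\le 1$ bounds this by $|D_{X_i}u|(\Omega)\le|D_Xu|(\Omega)<\infty$. Hence $u_k\in\bv_X(\Omega\setminus C_k)$ (indeed $u_k\in W^{1,1}_X\cap C^\infty$ there).

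Part \emph{(ii)} is the crux. As $\leb^n(C_k)=0$, the absolutely continuous part of $D_Xu_k$ on all of $\Omega$ is $(Xu_k)\leb^n\res(\Omega\setminus C_k)$, of finite mass by part (i); the only issue is a possible concentration on $C_k$. I would localize by covering $C_k$ with balls $B_j$ on which $M_k=\{f_j=0\}$ with $Xf_j\neq0$, so that $M_k$ splits $B_j$ into two $X$-regular domains $B_j^\pm$. On each side $u_k$ is smooth and $W^{1,1}_X$, and by Lemma~\ref{lem_tra} its one-sided traces at every point of $C_k$ are exactly $u^+$ and $u^-$, i.e.\ the traces of the original $u$. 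Applying the Gauss--Green/trace results of~\cite[Theorems 1.4 and 1.6]{vittone2012} on $B_j^\pm$ and adding the two sides gives, for $\vp\in C^1_c(B_j)$,
\[
\int_{B_j}u_kX_i^*\vp\,d\leb^n=-\int_{B_j}\vp\,X_iu_k\,d\leb^n+\int_{M_k\cap B_j}(u^+-u^-)\langle X_i,\nu\rangle\vp\,dP,
\]
where $P$ is the perimeter measure of (either side of) $M_k\cap B_j$. Patching these identities with a partition of unity identifies $D_Xu_k$ on $\Omega$ as $(Xu_k)\leb^n\res(\Omega\setminus C_k)$ plus a singular part concentrated on $C_k$ and equal to the jump of $u$ there. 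Since $C_k\subseteq\J_u$ and $P\res C_k$ is comparable to $\hau^{Q-1}\res C_k$, this singular part has finite mass, bounded by $|D_X^ju|(C_k)\le|D_Xu|(\Omega)$; together with $u_k\in L^1(\Omega)$ this yields $u_k\in\bv_X(\Omega)$.

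For \emph{(iii)}, the representation from part (ii) shows that $D_X^su_k$ is concentrated on $C_k\subseteq M_k$ and has the form $f\,\nu_{M_k}\,\hau^{Q-1}\res C_k$; as $M_k$ is a $C^1_X$-hypersurface, $C_k$ is $X$-rectifiable and Proposition~\ref{prop_defequivsbvx} immediately gives $u_k\in\sbv_X(\Omega)$. Moreover $\J_{u_k}$ coincides, up to $\hau^{Q-1}$-null sets, with $\{y\in C_k:u^+(y)\neq u^-(y)\}$; since $C_k\subseteq\J_u^k$ forces $|u^+-u^-|\ge1/\eta_k>0$ on $C_k$, one gets $\J_{u_k}=C_k\subseteq M_k\cap\J_u$, recovering the inclusion needed in Theorem~\ref{teo_introapprox}. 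The main obstacle is precisely part (ii): metric balls are not $X$-Lipschitz domains, so one cannot integrate by parts naively across $C_k$; the argument hinges on the fact that $M_k$ locally bounds $X$-regular domains, making the trace theory of~\cite{vittone2012} applicable, and on Lemma~\ref{lem_tra} to pin the traces of $u_k$ to $u^\pm$ so that the jump is both \emph{preserved} and \emph{controlled}.
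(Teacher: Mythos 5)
Your proposal is correct, but the route you take through the crux, part \emph{(ii)}, is genuinely different from the paper's. For \emph{(i)} you follow the same scheme (decomposition \eqref{eq:X_iu_k} plus Proposition~\ref{pro_estRS}); your use of the global $L^1$-contraction property of convolution even gives the bound $|D_{X_i}u|(\Omega)$ in place of the paper's $81\,|D_{X_i}u|(\Omega)$ obtained via \cite[Theorem 2.2 (b)]{afp} and the finite-overlap count of Remark~\ref{rem_pallacontenuta}. For \emph{(ii)}, the paper never touches the singular set: it first treats smooth $u$ (where $C_k=\emptyset$, so the estimate of part (i) is a genuine $\bv_X(\Omega)$ bound), then invokes the Meyers--Serrin density theorem \cite[Theorem 2.2.2]{fssc}, the linearity of the construction \eqref{eq_defuk} in $u$, and lower semicontinuity of the total variation under $L^1$ convergence to conclude for general $u$ --- a soft argument that yields finiteness of $|D_Xu_k|(\Omega)$ without identifying $D_X^su_k$. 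You instead compute $D_Xu_k$ across $C_k$ directly, via the trace/Gauss--Green theorems \cite[Theorems 1.4 and 1.6]{vittone2012} applied on the two sides of $M_k$ and Lemma~\ref{lem_tra} to pin the traces of $u_k$ to $u^\pm$; this is exactly the machinery the paper deploys in Proposition~\ref{prop_defequivsbvx}, here transplanted to $u_k$. Your approach is heavier but buys more: an explicit representation $D^s_Xu_k=(u^+-u^-)\,\nu\,P\res C_k$, hence \emph{(iii)} at once, the identity $\J_{u_k}=C_k$ and the quantitative bound $|D^s_Xu_k|(\Omega)\leq |D^j_Xu|(C_k)$, none of which the paper's argument for (ii) provides (the paper recovers (iii) and $\J_{u_k}=C_k$ separately from Lemma~\ref{lem_tra} and smoothness off $C_k$). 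Two points in your sketch should be made explicit, though both are standard and consistent with the paper's toolkit: first, the traces in the sense of \cite{vittone2012} must be identified with the one-sided approximate limits furnished by Lemma~\ref{lem_tra}, which holds $P$-a.e.\ because both are characterized by the same averaged limits over $B^\pm_\nu(y,r)$; second, the normal $\nu_{\J_u}$ appearing in Lemma~\ref{lem_tra} agrees with $\pm\nu_{M_k}$ only $\hau^{Q-1}$-a.e.\ on $C_k$ (by \cite[Proposition 2.18]{dv}), not ``at every point'' as you write --- harmless, since you only need an identity between measures, but worth stating.
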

\begin{proof}
\item[(i)] We know from Proposition \ref{pro_aux1} that for every $\ell, s \in \N$ one has $u\lsk \in \bv_X(\Omega \setminus C_k)$ where, for the sake of brevity, we defined $u\lsk \ceq (\xi \lsk u)*K_\eps $. To prove $u_k \in \bv_X(\Omega \setminus C_k)$, it is enough to show that
\begin{equation}\label{eq_aa1}
    \nl \sum_{\ell,s \in \N}u\lsk \nr_{\bv_X(\Omega \setminus C_k)}=\nl \sum_{\ell,s \in \N} u\lsk  \nr_{L^1(\Omega )}+\left|   D_X \left( \sum_{\ell,s \in \N}  u\lsk \right)\right|(\Omega \setminus C_k)<\infty .
\end{equation}
As we mentioned before, thanks to \eqref{eq_condeps7}, $u_k \in L^1(\Omega)$ so that the first addend on the right hand side of \eqref{eq_aa1} is finite. We are left to estimate the second term. Since $u_k \in C^\infty(\Omega \setminus C_k)$ for every $i \in \lbrace 1, \dots, m \rbrace$, we have that
\[
D_{X_i}u_k =X_iu_k \leb^n \text{ on }\Omega \setminus C_k,
\]
meaning that
\begin{align*}
    \left|   D_{X_i} \left( \sum_{\ell,s \in \N}  u\lsk \right)\right|(\Omega \setminus C_k)=\nl  \sum_{\ell, s \in \N} X_iu\lsk \nr_{L^1(\Omega)}.
\end{align*}
Hence, by the decomposition \eqref{eq:X_iu_k}, we write
\[
\nl X_iu_k \nr_{L^1(\Omega)} \leq \nl  \sum_{\ell,s \in \N} (\xi \lsk D_{X_i}u)*K_{\eps}\nr_{L^1(\Omega)}+\nl  \sum_{\ell,s \in \N}
    S\lski \nr_{L^1(\Omega)}+\nl
 \sum_{\ell,s \in \N} R\lski  \nr_{L^1(\Omega)}.
\]
Thanks to Proposition \ref{pro_estRS}, we are only left to prove the boundedness of the first term of the right hand side of the inequality above. Using \cite[Theorem 2.2 (b)]{afp} we have
\begin{align*}
\nl  \sum_{\ell,s \in \N} (\xi \lsk D_{X_i}u)*K_{\eps}\nr_{L^1(\Omega)} \leq \sum_{\ell,s \in \N} \nl (\xi \lsk D_{X_i}u)*K_{\eps}\nr_{L^1(A\lsk)} \leq \sum_{\ell,s \in \N}|D_{X_i}u|(A\lsk+\eps).
\end{align*}
where we have written $A\lsk+\eps$ to denote 
\[
A\lsk+\eps \ceq \bigcup_{x\in A\lsk} B_E(x,\eps).
\]
By Remark \ref{rem_pallacontenuta} we have
\[
A\lsk+\eps \se
\bigcup_{\substack{ \ell-1\leq \alpha \leq  \ell+1\\s-1 \leq \beta \leq s+1 } }A^{\alpha,\beta}_k
\]
so that 
\[
|D_{X_i}u|(A\lsk+\ve\lsk) \leq |D_{X_i}u|
\left(
\bigcup_{\substack{ \ell-1\leq \alpha \leq  \ell+1\\s-1 \leq \beta \leq s+1 } }A^{\alpha,\beta}_k
\right)
\]
and, finally,
\[
\sum_{\ell,s \in \N}|D_{X_i}u|(A\lsk+\eps) \leq 9\sum_{\ell,s \in \N} |D_{X_i}u|(A\lsk)\leq 27\sum_{\ell \in \N} |D_{X_i}u|(A^\ell_k) \leq 81 |D_{X_i}u|(\Omega)
\]
which, together with the fact that $u \in \bv_X(\Omega)$, gives the boundedness of $\nl u_k \nr_{\bv_X(\Omega \setminus C_k)}$. 
\item[(ii)] We aim to prove that $u_k \in \bv_X(\Omega)$. Suppose first that, in addition to our previous assumptions, we have $u \in C^\infty(\Omega)$. Then $C_k=\emptyset$ and clearly $u_k \in C^\infty(\Omega) \subset C^1_X(\Omega)$ for any $k\in \mathbb N$, hence
\begin{align*}
|D_{X_i}u_k|(\Omega)&=\nl X_iu_k \nr_{L^1(\Omega)}\\
&\leq  \nl  \sum_{\ell,s \in \N} (\xi \lsk D_{X_i}u)*K_{\eps}\nr_{L^1(\Omega)}+\nl  \sum_{\ell,s \in \N}
    S\lski \nr_{L^1(\Omega)}+\nl
 \sum_{\ell,s \in \N} R\lski  \nr_{L^1(\Omega)}\\
 &\leq 81|D_{X_i}u|(\Omega)+\frac{4}{k},
\end{align*}
the latter implying that $u_k \in \bv_X(\Omega)$. 

Now we drop the smoothness assumption on $u$ and we just assume that $u \in \bv_X(\Omega)$. We know, by \cite[Theorem 2.2.2]{fssc}, that there exists a sequence $(u^t)_{t \in \N}$ such that, for every $t \in \N$ we have $u^t \in C^\infty(\Omega) \cap \bv_X(\Omega)$ and
\[
\nl u^t-u \nr_{L^1(\Omega)}\xrightarrow{t \to +\infty }0, \quad |D_{X_i}u^t|(\Omega) \xrightarrow{t \to +\infty}|D_{X_i}u|(\Omega),\quad |D_{X_i}u^t|(\Omega) \leq |D_{X_i}u|(\Omega)+\frac{1}{t}
\]
for every $i \in \lbrace 1,\dots,m \rbrace$. Now for every $t \in \N$ consider the approximation sequence $(u_k^t)_{k \in \N}$ constructed as in \eqref{eq_defuk}. For the observations we just made on the approximation of smooth functions we know that $u_k^t \in C^1_X(\Omega) \cap \bv_X(\Omega)$. Let us prove that $\| u_k^t-u_k \|_{L^1(\Omega)} \xrightarrow{t \to +\infty}0$. First we observe that, thanks to Remark \ref{rem_pallacontenuta}, we have $\nl u_k\nr_{L^1(\Omega)} \leq 81\nl u \nr_{L^1(\Omega)}$ so that 
\[
\nl u^t_k-u_k \nr_{L^1(\Omega)} =\nl (u^t-u)_k \nr_{L^1(\Omega)} \leq 81 \nl u^t-u \nr_{L^1(\Omega)}.
\]
The inequality above shows that $\| u_k^t-u_k \|_{L^1(\Omega)} \xrightarrow{t \to +\infty}0$. Then we observe 
\[
|D_{X_i}u^t_k|(\Omega) \leq 81|D_{X_i}u^t|(\Omega)+\frac{4}{k}\leq 81|D_{X_i}u|(\Omega)+\frac{81}{t}+\frac{4}{k}\leq  81|D_{X_i}u|(\Omega)+85.
\]
Passing to the $\liminf$ for $t \to +\infty$ in the above inequality and using the lower semicontinuity of the total variation is enough to obtain $u_k \in \bv_X(\Omega)$.
\item[(iii)] From Lemma \ref{lem_tra}, the construction of $C_k$ in Lemma~\ref{lem_closedset} and the fact that $u_k \in \bv_X(\Omega) \cap C^\infty(\Omega \setminus C_k)$ we obtain that $u_k\in \sbv_X(\Omega)$. 
\end{proof}
The following Lemma will be the last step needed to prove our main result.
\begin{lem}\label{lem_sametrace}
    Let $v,w \in \sbv_X(\Omega)$ and $R \se \J_v \cap \J_w$. Let $ \jf_v$ and $\jf_w$ be defined as in Lemma \ref{lem_closedset} and such that $\jf_v = \jf_w$ $\hau^{Q-1}$-a.e. on $R$. Then
    \[
    |D_X(v-w)|(R)=0.
    \]
\end{lem}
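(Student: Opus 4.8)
\emph{The plan.} The guiding idea is that $R$ is $\leb^n$-negligible, so over $R$ the measures $D_Xv$ and $D_Xw$ reduce to their jump parts, and the hypothesis $\jf_v=\jf_w$ forces those jump parts to coincide. First I would record that $v-w\in\bv_X(\Omega)$, since $v,w\in\sbv_X(\Omega)\se\bv_X(\Omega)$. Next, $R\se\J_v\se\mathcal S_v$, and $\mathcal S_v$ is $\leb^n$-negligible because $v\in L^1_{\loc}(\Omega)$ admits an approximate limit at $\leb^n$-a.e.\ point; hence $\leb^n(R)=0$ and the absolutely continuous parts satisfy $D^a_Xv\res R=D^a_Xw\res R=0$. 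Consequently $D_Xv\res R=D^s_Xv\res R$ and $D_Xw\res R=D^s_Xw\res R$, and since $R\se\J_v\cap\J_w$ these singular parts are exactly the jump parts restricted to $R$.

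\emph{Comparing the jump parts.} I would then mimic the proof of Proposition~\ref{prop_defequivsbvx}: cover $R$, up to an $\hau^{Q-1}$-negligible set, by pairwise disjoint $C^1_X$-hypersurfaces $(S_j)_{j\in\N}$, each locally separating the ambient space into two components of locally finite $X$-perimeter. Denoting by $P_j^X$ the associated (purely geometric) perimeter measure and invoking \cite[Theorems~1.4 and~1.6]{vittone2012} together with \cite[Theorem~4.2]{AmbrosioAhlfors}, on every $S_j$ one gets
\[
D^s_Xv\res(R\cap S_j)=(v^+-v^-)\,\lambda\,\nu_{S_j}\,\hau^{Q-1}\res(R\cap S_j),\qquad
D^s_Xw\res(R\cap S_j)=(w^+-w^-)\,\lambda\,\nu_{S_j}\,\hau^{Q-1}\res(R\cap S_j),
\]
with the \emph{same} density $\lambda\colon S_j\to(0,+\infty)$, because $\lambda$ is the Radon--Nikodym density of $P_j^X$ with respect to $\hau^{Q-1}\res S_j$ and hence does not depend on the jumping function. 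Since $\jf_v=\jf_w$ holds $\hau^{Q-1}$-a.e.\ on $R$, we have $v^+=w^+$, $v^-=w^-$ and $\nu_{\J_v}=\nu_{\J_w}$ (the common orientation fixing the sign of $\nu_{S_j}$) $\hau^{Q-1}$-a.e.\ on $R\cap S_j$. The two displayed measures therefore agree on each $R\cap S_j$; summing over $j$ yields $D_Xv\res R=D_Xw\res R$, i.e.\ $D_X(v-w)\res R=0$, whence $|D_X(v-w)|(R)=0$.

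\emph{Main obstacle and an alternative.} The delicate point will be the claim that $\lambda$ is one and the same for $v$ and $w$: this rests on choosing a single family $(S_j)$ covering $R$ (legitimate, as $R\se\J_v\cap\J_w$) and on the fact that the passage from $P_j^X$ to $\hau^{Q-1}\res S_j$ is a geometric statement about $S_j$ alone, independent of the function jumping across it; one must also keep the orientations of $\nu_{S_j}$, $v^\pm$ and $w^\pm$ mutually consistent, which is precisely what $\jf_v=\jf_w$ encodes. If one prefers to avoid the representation entirely, an equivalent route is to show that $\hau^{Q-1}$-a.e.\ point of $R$ is a point of approximate continuity of $v-w$: using the common half-balls $B^\pm_\nu(x,r)$ and the triangle inequality, $v-w$ has vanishing one-sided traces there, and the balanced-density estimate $\leb^n(B^\pm_\nu(x,r))/\leb^n(B(x,r))\to\tfrac12$ (as used in Lemma~\ref{lem_tra}, via \cite[Proposition 2.1.5]{dontesi}) gives $(v-w)^\star(x)=0$, so that $\hau^{Q-1}(R\cap\mathcal S_{v-w})=0$; the two implications \eqref{1implication}--\eqref{2implication} of Theorem~\ref{teo_prop3.76}, applied on a $\sigma$-finite exhaustion of $R$, then deliver $|D_X(v-w)|(R)=0$.
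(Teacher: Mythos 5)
Your proposal is correct, and it actually contains two arguments: your primary, representation-based route is genuinely different from the paper's, while the ``alternative'' you sketch in your closing paragraph is precisely the paper's own proof. The paper argues exactly as in that alternative: since $\jf_v=\jf_w$ holds $\hau^{Q-1}$-a.e.\ on $R$, a triangle inequality over the common half-balls $B^\pm_\nu(p,r)$ shows that $v-w$ has approximate limit $0$ at $\hau^{Q-1}$-a.e.\ $p\in R$, so $\hau^{Q-1}(R\cap\mathcal S_{v-w})=0$; then implication \eqref{1implication} of Theorem~\ref{teo_prop3.76} gives $|D_X(v-w)|(R\cap\mathcal S_{v-w})=0$, and implication \eqref{2implication}, applied to the $\sigma$-finite measure $\hau^{Q-1}\res(R\setminus\mathcal S_{v-w})$, gives $|D_X(v-w)|(R\setminus\mathcal S_{v-w})=0$. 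Your main route---covering $R$ by pairwise disjoint $C^1_X$-hypersurfaces and representing both jump parts via \cite[Theorems 1.4 and 1.6]{vittone2012} and \cite[Theorem 4.2]{AmbrosioAhlfors} with a common, purely geometric density $\lambda$---is also sound: it is the machinery of Proposition~\ref{prop_defequivsbvx}, and the function-independence of $\lambda$ that you flag as the delicate point does hold, since $\lambda$ is the density of the perimeter measure $P^X_j$ with respect to $\hau^{Q-1}\res S_j$. Two small points you should make explicit there: (a) summing over $j$ only gives agreement of $D_Xv$ and $D_Xw$ on $R\cap\bigcup_j S_j$, so for the $\hau^{Q-1}$-negligible remainder you still need implication \eqref{1implication} of Theorem~\ref{teo_prop3.76}; (b) the one-sided traces of $v,w$ on $S_j$ in the sense of \cite{vittone2012} must be identified, with consistent orientations, with the approximate jump values recorded in $\jf_v,\jf_w$---the same identification used in Proposition~\ref{prop_defequivsbvx}. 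As for the trade-off: the paper's argument is shorter and relies only on the fine properties of Theorem~\ref{teo_prop3.76}, whereas yours produces the (equivalent, but more explicit) measure identity $D_Xv\res R=D_Xw\res R$ at the cost of invoking the trace-representation machinery.
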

\begin{proof}
    Let us observe that $v-w \in \sbv_X(\Omega)$ and $\hau^{Q-1}(R\cap\mathcal S_{v-w})= 0$. In fact, for $\hau^{Q-1}$-a.e. $p \in R$ one has $\jf_v(p)=\jf_w(p)$ and, letting $\nu \ceq \nu_{\J_v}(p) = \nu_{\J_w}(p)$, we notice that
    \begin{align*}
    \lim_{r \to 0}\fint_{B(p,r)}  |v-w|d\leb^n \leq& \lim_{r \to 0}\fint_{B^+_\nu(p,r)} |v-v^+|d\leb^n+\lim_{r \to 0}\fint_{B^+_\nu(p,r)} |w-w^+|d\leb^n+\\
    &+\lim_{r \to 0}\fint_{B^-_\nu(p,r)} |v-v^-|d\leb^n+\lim_{r \to 0}\fint_{B^-_\nu(p,r)} |w-w^-|d\leb^n=0.
    \end{align*}
    The latter implies that $v-w$ has approximate limit 0 at $p$, i.e., $p\in R \setminus \mathcal S_{v-w}$. This proves that $\hau^{Q-1}(R\cap\mathcal S_{v-w})= 0$ and by  Theorem~\ref{teo_prop3.76}~\eqref{1implication}
    \begin{equation}\label{eq_Riccione1}
    |D_X(v-w)|(R\cap\mathcal S_{v-w})= 0.
    \end{equation}
    Moreover, the measure $\hau^{Q-1}\res(R\setminus\mathcal S_{v-w})$ is $\sigma$-finite and Theorem~\ref{teo_prop3.76}~\eqref{2implication} implies that 
   \begin{equation}\label{eq_Riccione2} 
    |D_X(v-w)|(R\setminus\mathcal S_{v-w})= 0.
    \end{equation}
The desired equality $|D_X(v-w)|(R)= 0$  follows from~\eqref{eq_Riccione1} and~\eqref{eq_Riccione2}.
\end{proof}

\subsection{Proof of Theorem~\ref{teo_introapprox}}
We are ready to prove our main result, Theorem \ref{teo_introapprox}, that we restate for the reader's convenience.
\begin{teo}
Let $u \in \sbv_X(\Omega)$. Then there exists a sequence of functions $(u_k)_{k \in \N} \subset \sbv_X(\Omega)$ and of $C^1_X$-hypersurfaces $(M_k)_{k \in \N} \subset \Omega$ such that, for every $k \in \N$, $\J_{u_k} \se M_k \cap \J_u$, $\J_{u_k}$ is compact, and
\[
\nl u-u_k \nr_{\bv_X(\Omega)}\xrightarrow{k \to +\infty}0, \qquad u_k \in C^\infty(\Omega \setminus \J_{u_k}).
\]
\end{teo}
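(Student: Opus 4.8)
The plan is to assemble the objects already prepared in this section. For each $k\in\N$ I would invoke Construction~\ref{Con_JeM} and Lemma~\ref{lem_closedset} to produce the $X$-rectifiable set $\J_u^k$, the $C^1_X$-hypersurface $M_k$ and the compact set $C_k\se\J_u^k\cap M_k$, and then take $u_k$ to be the function defined in~\eqref{eq_defuk}. Proposition~\ref{pro_bvuk} already gives $u_k\in\sbv_X(\Omega)$ and $u_k\in C^\infty(\Omega\setminus C_k)$, while Remark~\ref{rem_pallacontenuta} gives $\nl u_k-u\nr_{L^1(\Omega)}\to0$. Hence only two points remain: to identify $\J_{u_k}$, and to prove $|D_X(u-u_k)|(\Omega)\to0$.

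First I would pin down the jump set. As $u_k$ is smooth on $\Omega\setminus C_k$, necessarily $\J_{u_k}\se C_k$. Conversely, Lemma~\ref{lem_tra} shows that at every $y\in C_k$ the function $u_k$ admits one-sided traces $u^+(y)$ and $u^-(y)$ along $\nu_{\J_u}(y)$; since $C_k\se\J_u^k$ forces $|u^+(y)-u^-(y)|\ge 1/\eta_k>0$, every point of $C_k$ is an $X$-jump point of $u_k$. Therefore $\J_{u_k}=C_k$, which is compact, and $\J_{u_k}=C_k\se\J_u^k\cap M_k\se M_k\cap\J_u$, as required; moreover this identifies $\jf_{u_k}=\jf_u$ $\hau^{Q-1}$-a.e.\ on $C_k$.

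For the total variation I would split $\Omega=C_k\cup(\Omega\setminus C_k)$. On $C_k$, since $u-u_k\in\sbv_X(\Omega)$ and $\jf_u=\jf_{u_k}$ $\hau^{Q-1}$-a.e.\ on $C_k$, Lemma~\ref{lem_sametrace} (with $v=u$ and $w=u_k$) yields $|D_X(u-u_k)|(C_k)=0$. On the open set $\Omega\setminus C_k$ the function $u_k$ is smooth, so for each $i$
\[
D_{X_i}(u-u_k)\res(\Omega\setminus C_k)=(D^\ap_{X_i}u-X_iu_k)\,\leb^n+D^j_{X_i}u\res(\J_u\setminus C_k),
\]
and, the two summands being mutually singular,
\[
|D_{X_i}(u-u_k)|(\Omega\setminus C_k)=\nl D^\ap_{X_i}u-X_iu_k\nr_{L^1(\Omega\setminus C_k)}+|D^j_{X_i}u|(\J_u\setminus C_k).
\]
The jump term is small by construction: from $\J_u\setminus C_k\se(\J_u\setminus\J_u^k)\cup(\J_u^k\setminus M_k)\cup((\J_u^k\cap M_k)\setminus C_k)$ and~\eqref{eq_appXrect},~\eqref{eq_abscont},~\eqref{eq_defck} one gets $|D_Xu|(\J_u\setminus C_k)\le 3/k$.

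The analytic heart is the absolutely continuous term, which I would treat by inserting the representation~\eqref{eq:X_iu_k} of $X_iu_k$ and splitting $D_{X_i}u=D^\ap_{X_i}u\,\leb^n+D^s_{X_i}u$ inside the convolution. The piece $\sum_{\ell,s}(\xi\lsk D^\ap_{X_i}u)*K_\eps$ is $1/k$-close in $L^1$ to $\sum_{\ell,s}\xi\lsk D^\ap_{X_i}u=D^\ap_{X_i}u$ by~\eqref{eq_condeps8}, while $\sum_{\ell,s}S\lski$ and $\sum_{\ell,s}R\lski$ are $O(1/k)$ by Proposition~\ref{pro_estRS}. The delicate piece, and the step I expect to be the main obstacle, is the mollified singular part $\sum_{\ell,s}(\xi\lsk D^s_{X_i}u)*K_\eps$: since mollification does not shrink the $L^1$-mass of a singular measure, one cannot hope to make it small pointwise and must instead exploit the localization away from $C_k$. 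Using $\nl\mu*K_\eps\nr_{L^1}\le|\mu|(\R^n)$, together with $0\le\xi\lsk\le1$ and the fact that the $\xi\lsk$ are supported in $\Omega\setminus C_k$, gives
\[
\nl\textstyle\sum_{\ell,s}(\xi\lsk D^s_{X_i}u)*K_\eps\nr_{L^1}\le\int\Big(\textstyle\sum_{\ell,s}\xi\lsk\Big)\,d|D^s_{X_i}u|\le|D^s_{X_i}u|(\Omega\setminus C_k).
\]
As $u\in\sbv_X(\Omega)$ has $D^s_Xu=D^j_Xu$ concentrated on $\J_u$, this equals $|D^j_{X_i}u|(\J_u\setminus C_k)$, which is again $\le 3/k$. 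Collecting the four pieces shows $\nl D^\ap_{X_i}u-X_iu_k\nr_{L^1(\Omega\setminus C_k)}\to0$, hence $|D_{X_i}(u-u_k)|(\Omega\setminus C_k)\to0$; summing over $i$ and combining with the $L^1$ convergence gives $\nl u-u_k\nr_{\bv_X(\Omega)}\to0$, as desired.
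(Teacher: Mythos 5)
Your proposal is correct and follows essentially the same route as the paper's proof: the same objects $C_k$, $M_k$ and $u_k$ from Lemma~\ref{lem_closedset} and \eqref{eq_defuk}, the same use of Lemma~\ref{lem_tra}, Proposition~\ref{pro_bvuk} and Lemma~\ref{lem_sametrace} to get $|D_X(u-u_k)|(C_k)=0$, the same splitting of the jump part via \eqref{eq_appXrect}, \eqref{eq_abscont}, \eqref{eq_defck}, and the same treatment of the absolutely continuous part through \eqref{eq:X_iu_k}, \eqref{eq_condeps8} and Proposition~\ref{pro_estRS}. Your only deviations are harmless (in fact slight improvements): you make explicit the identification $\J_{u_k}=C_k$, which the paper asserts only in the introduction, and you bound the mollified jump term by the mass inequality $\nl \mu*K_\eps \nr_{L^1}\leq |\mu|(\R^n)$ summed against the partition of unity, obtaining $|D^j_{X_i}u|(\J_u\setminus C_k)$ directly, where the paper instead invokes the covering-multiplicity argument of Remark~\ref{rem_pallacontenuta} with the factor $81$.
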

\begin{proof}

Let $C_k$ and $(u_k)_{k \in \N}$ be defined as in Lemma~\ref{lem_closedset} and \eqref{eq_defuk}. By definition of $\bv_X$-norm we have 
\begin{equation}\label{eq_f1}
    \nl u-u_k \nr_{\bv_X(\Omega)}=\nl u-u_k \nr_{L^1(\Omega)}+|D_X(u-u_k)|(\Omega).
\end{equation}
Thanks to \eqref{eq_condeps7} we  estimate
\begin{equation}\label{eq_f2}
\nl u-u_k \nr_{L^1(\Omega)}<\frac{1}{k}.
\end{equation}
Concerning the other summand, we estimate
\begin{equation}\label{eq_ff2}
\begin{aligned}
&|D_X(u-u_k)|(\Omega)\\ \leq\; & |D_X(u-u_k)|(\Omega \setminus \J_u)+|D_X(u-u_k)|(\J_u \setminus \J^k_u)+|D_X(u-u_k)|(\J_u^k \setminus M_k)\\
&+|D_X(u-u_k)|((\J_u \cap M_k)\setminus C_k)+|D_X(u-u_k)|(C_k).
\end{aligned}
\end{equation}
Because of Lemma \ref{lem_tra}, Proposition \ref{pro_bvuk} and Lemma \ref{lem_sametrace} we have 
\begin{equation}\label{eq_ff1}
|D_X(u-u_k)|(C_k)=0.
\end{equation}
Then, since $u_k \in C^\infty(\Omega \setminus C_k)$, \eqref{eq_appXrect}, \eqref{eq_abscont} and \eqref{eq_defck} imply that
\begin{align}\label{eq_f3}
&|D_X(u-u_k)|((\J_u \cap M_k)\setminus C_k)=|D_Xu|((\J_u \cap M_k)\setminus C_k)<\frac{1}{k},\\
\notag &|D_X(u-u_k)|(\J_u \setminus \J^k_u)=|D_Xu|(\J_u \setminus \J^k_u)<\frac{1}{k},\\
\notag & |D_X(u-u_k)|(\J_u^k \setminus M_k)=|D_Xu|(\J_u^k \setminus M_k)<\frac{1}{k}.
\end{align}
By Theorem \ref{teo_gradapprox}, one has $D_Xu=D^\ap_Xu \leb ^n+D_X^ju$ so that, for every $i \in \lbrace 1,\dots, m \rbrace$, 
\begin{align*}
&|D_{X_i}(u-u_k)|(\Omega \setminus \J_u)=\nl D^{\ap}_{X_i}u-X_iu_k \nr_{L^1(\Omega )}\\
\leq\;&
 \nl D^{\ap}_{X_i}u-\sum_{\ell,s \in \N}[(\xi\lsk D_{X_i}u)*K_{\eps}]\nr_{L^1(\Omega)}+\nl \sum_{\ell,s \in \N} R\lski \nr_{L^1(\Omega )} + \nl \sum_{\ell,s \in \N} S\lski \nr_{L^1(\Omega )} .
\end{align*}
By Proposition \ref{pro_estRS} 
\begin{equation}\label{eq_f4}
\nl \sum_{\ell,s \in \N} R\lski \nr_{L^1(\Omega )}+ \nl \sum_{\ell,s \in \N} S\lski \nr_{L^1(\Omega )} \leq \frac{4}{k},
\end{equation}
while
\begin{align*}
\nl\sum_{\ell,s \in \N}[(\xi\lsk D_{X_i}u)*K_{\eps}]-D^{\ap}_{X_i}u\nr_{L^1(\Omega)}=&\nl\sum_{\ell,s \in \N}[(\xi\lsk (D_{X_i}^{\ap}u\leb^n+D^j_{X_i}u))*K_{\eps}]-D^{\ap}_{X_i}u\nr_{L^1(\Omega)}\\
\leq &\nl \sum_{\ell,s \in \N}[(\xi\lsk D_{X_i}^{\ap}u)*K_{\eps}]-\sum_{\ell,s \in \N}\xi\lsk D^{\ap}_{X_i}u\nr_{L^1(\Omega )}\\
&+\nl\sum_{\ell,s \in \N}(\xi\lsk D^{j}_{X_i}u)*K_{\eps}\nr_{L^1(\Omega )}.
\end{align*}
Thanks to \eqref{eq_condeps8} we have
\begin{equation}\label{eq_f5}
\nl \sum_{\ell,s \in \N}[(\xi\lsk D_{X_i}^{\ap}u)*K_{\eps}]-\sum_{\ell,s \in \N}\xi\lsk D^{\ap}_{X_i}u\nr_{L^1(\Omega )} \leq \frac{1}{k},
\end{equation}
while
\[
\nl\sum_{\ell,s \in \N}(\xi\lsk D^j_{X_i}u)*K_{\eps}\nr_{L^1(\Omega )} \leq \sum_{\ell,s \in \N } \nl (\xi\lsk D^j_{X_i}u)*K_{\eps}\nr_{L^1(A\lsk)}.
\]
Fix $\ell,s \in \N$. By \cite[Theorem 2.2 (b)]{afp} we can write
\[
\nl(\xi\lsk D^j_{X_i}u)*K_{\eps}\nr_{L^1(A\lsk)} \leq  |D^j_{X_i}u|(A\lsk+\ve\lsk),
\]
which in turn, by Remark \ref{rem_pallacontenuta}, satisfies the following inclusion
\[
A\lsk+\eps \se
\bigcup_{\substack{ \ell-1\leq \alpha \leq  \ell+1\\s-1 \leq \beta \leq s+1 } }A^{\alpha,\beta}_k
\]
Hence
\[
|D^j_{X_i}u|(A\lsk+\ve\lsk) \leq |D^j_{X_i}u|
\left(
\bigcup_{\substack{ \ell-1\leq \alpha \leq  \ell+1\\s-1 \leq \beta \leq s+1 } }A^{\alpha,\beta}_k
\right)
\]
Finally, we obtain
\begin{equation}\label{eq_f6}
\begin{split}
\sum_{\ell,s \in \N}\nl (\xi\lsk D^j_{X_i}u)*K_{\eps}\nr_{L^1(A\lsk)}&\leq  \sum_{\ell,s \in \N}|D^j_{X_i}u|
\left(
\bigcup_{\substack{ \ell-1\leq \alpha \leq  \ell+1\\s-1 \leq \beta \leq s+1 } }A^{\alpha,\beta}_k
\right)  \leq 81|D^j_{X_i}u|(\Omega \setminus C_k) \leq \frac{81}{k}.
\end{split}
\end{equation}
Combining \eqref{eq_f2}, \eqref{eq_ff2}, \eqref{eq_ff1}, \eqref{eq_f3}, \eqref{eq_f4}, \eqref{eq_f5}, \eqref{eq_f6} with \eqref{eq_f1} and letting $k \to +\infty$ one achieves the desired conclusion.
\end{proof}

\end{document}